\DeclareSymbolFont{bbold}{U}{bbold}{m}{n}
\DeclareSymbolFontAlphabet{\mathbbold}{bbold}
\newtheorem{thm}{Theorem}
\newtheorem{prop}[thm]{Proposition}
\newtheorem{lem}[thm]{Lemma}
\newtheorem{cor}[thm]{Corollary}
\theoremstyle{definition}
\newtheorem{defn}[thm]{Definition}
\newtheorem{ex}[thm]{Example}
\theoremstyle{remark}
\newtheorem{rem}[thm]{Remark}
\newtheorem{conj}[thm]{Conjecture}
\newcommand{\supp}{\text{supp}}
\newcommand{\R}{\mathbb{R}}
\newcommand{\N}{\mathbb{N}}
\newcommand{\Z}{\mathbb{Z}}
\newcommand{\bm}{\mathbf}
\newcommand{\nonemptyaty}{\textnormal{int}(\Delta^y(E))\neq \emptyset}
\newcommand{\Hdim}{\textnormal{dim}_{\mathcal{H}}}
\def\XXint#1#2#3{{\setbox0=\hbox{$#1{#2#3}{\int}$ }
\vcenter{\hbox{$#2#3$ }}\kern-.6\wd0}}
\title{Nonempty interior of pinned distance and tree sets}
\author{Tainara Borges, Benjamin Foster, Yumeng Ou, Eyvindur Palsson}
\address[T. Borges]{Department of Mathematics, Brown University, Providence, RI 02912}
\address[B. Foster]{Department of Mathematics, Stanford University, Stanford, CA 94305}
\address[Y. Ou]{Department of Mathematics, University of Pennsylvania, Philadelphia, PA 19104}
\address[E. Palsson]{Department of Mathematics, Virginia Tech, Virginia, VA 24061}
\begin{document}
\begin{abstract} 
For a compact set $E\subset\mathbb{R}^d$, $d\geq 2$, consider the pinned distance set $\Delta^{y}(E)=\lbrace |x-y| : x\in E\rbrace$. Peres and Schlag showed that if the Hausdorff dimension of $E$ is bigger than $\frac{d+2}{2}$ with $d\geq 3$, then there exists a point $y\in E$ such that $\Delta^{y}(E)$ has nonempty interior. In this paper we obtain the first non-trivial threshold for this problem in the plane, improving on the Peres--Schlag threshold when $d=3$, and we extend the results to trees using a novel induction argument.

\end{abstract}

\maketitle

\section{Introduction and Main results}

The Falconer distance problem asks how large the Hausdorff dimension of a compact set $E\subset\mathbb{R}^d$, $d\geq 2$, needs to be to guarantee that its \emph{distance set}
$$\Delta(E)=\lbrace |x-y|: x,y\in E \rbrace$$
has positive Lebesgue measure. Falconer \cite{Falconer85} showed through a construction that the dimensional threshold $\frac{d}{2}$ is the best one can hope for and proved that $\Hdim(E)>\frac{d}{2}+\frac{1}{2}$ is sufficient. Bridging this gap, namely solving Falconer's conjecture that the threshold $\frac{d}{2}$ is sufficient, is one of the major open problems at the interface of harmonic analysis and geometric measure theory. Despite much progress in recent years, this conjecture still remains open.

The pinned Falconer distance problem similarly asks how large the Hausdorff dimension of a compact set $E\subset\mathbb{R}^d$, $d\geq 2$, needs to be to guarantee that there exists a point $y\in E$ such that the \emph{pinned distance set} at $y$, 
$$\Delta^{y}(E)=\lbrace |x-y|: x\in E \rbrace,$$
has positive Lebesgue measure. Any dimensional threshold for the pinned Falconer distance problem immediately implies that the same dimensional threshold works for the unpinned original problem. This pinned variant was first studied by Peres and Schlag \cite{PS00} who obtained a sufficient threshold of $\Hdim(E)>\frac{d}{2}+\frac{1}{2}$. The original unpinned Falconer distance problem saw improvements by Bourgain \cite{Bourgain94}, Wolff \cite{Wolff99} and Erdo\u{g}an \cite{Erdogan05} that left the best threshold at $\Hdim(E)>\frac{d}{2}+\frac{1}{3}$ for a while. Liu \cite{Liu19} connected the main proof techniques for the pinned and unpinned variants and matched the threshold $\Hdim(E)>\frac{d}{2}+\frac{1}{3}$ for the pinned problem. Since Liu's work, all the best thresholds for the pinned and unpinned variants have been the same. With multiple improvements recently \cite{DZ19,GIOW20,DGOWWZ21,DIOWZ21,DORZ23} the best thresholds now stand at $\frac{5}{4}$ in the plane and at $\frac{d}{2}+\frac{1}{4}-\frac{1}{8d+4}$ when $d\geq 3$.

One of the motivations behind the Falconer distance problem is a classic result of Steinhaus \cite{Steinhaus20}, which states that if a set $E\subset\mathbb{R}^d$, $d\geq 1$, has positive Lebesgue measure, then the set $E-E = \lbrace x-y : x,y\in E\rbrace$ contains a neighborhood of the origin, which immediately implies that the distance set $\Delta(E)$ has positive Lebesgue measure. One can view the Falconer distance problem as reducing the assumptions needed on the set $E$ to obtain the conclusion on the distance set. In fact, from the Steinhaus result one not only concludes that the distance set has positive Lebesgue measure but also has nonempty interior. In that spirit, Mattila and Sj\"{o}lin \cite{MattilaSjolin} showed that a compact set $E\subset\mathbb{R}^d$, $d\geq 2$, satisfying $\Hdim(E)>\frac{d}{2}+\frac{1}{2}$ has the property that its distance set $\Delta(E)$ has nonempty interior. In the pinned setting Peres and Schlag \cite{PS00} showed that if $\Hdim(E)>\frac{d}{2}+1$ then there exists a point $y\in E$ such that the pinned distance set $\Delta^{y}(E)$ has nonempty interior. Despite all the recent progress on the Falconer distance problem and its pinned variant, neither of the results on nonempty interior has been improved since the original papers. Furthermore, observe that unlike when only asking for a positive Lebesgue measure, there is a gap here between the pinned and unpinned results. In the plane when $d=2$ one has $\frac{d}{2}+1=2$ so the result on nonempty interior for the pinned distance set in \cite{PS00} does not provide any information in that case. This leads us to the first main result of this paper, which establishes the first non-trivial threshold for nonempty interior of the pinned distance set in the plane.

\begin{thm}\label{Thm:pinnednonemptyin2dbasic} Let $E\subset \R^2$ be a compact set with $\Hdim(E)>\frac{7}{4}$. Then there exists $y\in E$ such that $\Delta^y(E)$ contains an interval.
\end{thm}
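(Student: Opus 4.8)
The plan is to run the classical Mattila--Sj\"olin \cite{MattilaSjolin}/Peres--Schlag \cite{PS00} scheme for nonempty interior, but with the strongest available quantitative input on circular averages, so as to reach the threshold $\tfrac74=\tfrac54+\tfrac12$. Fix $s$ with $\tfrac74<s<\Hdim(E)$ and a Frostman probability measure $\mu$ supported on a compact subset of $E$ with $\mu(B(x,r))\lesssim r^s$, normalized so that $\supp\mu$ has diameter at most $1$. For $y\in\supp\mu$ write $\mu^y$ for the pinned distance measure, i.e. the push-forward of $\mu$ under $x\mapsto|x-y|$; it is a finite Borel measure on $[0,\infty)$ of total mass $1$ whose closed support is contained in $\Delta^y(E)$. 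If $\mu^y$ has a continuous density, then that density is positive on some open interval, and that interval lies in $\Delta^y(E)$; so it suffices to find a single $y\in\supp\mu\subseteq E$ with $\mu^y$ absolutely continuous with continuous density. By the one-dimensional Sobolev embedding $H^\beta(\R)\hookrightarrow C^0(\R)$ for $\beta>\tfrac12$, it is enough to prove
\[
\int_{\supp\mu}\|\mu^y\|_{H^\beta(\R)}^2\,d\mu(y)<\infty
\]
for some $\beta>\tfrac12$, since then $\mu^y\in H^\beta(\R)$ for $\mu$-a.e.\ $y$ and any such $y$ works.

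To see what this requires, expand the integral on the Fourier side. Using $\widehat{\mu^y}(k)=\int e^{-2\pi i k|x-y|}\,d\mu(x)$, a smooth even Littlewood--Paley decomposition $\sum_{R}\psi(k/R)=1$ over dyadic $R\ge1$ (the complementary low-frequency piece being harmless, as each $\mu^y$ has mass $1$), and interchanging the order of integration gives
\[
\int_{\supp\mu}\|\mu^y\|_{H^\beta}^2\,d\mu(y)\ \lesssim\ 1+\sum_{R}R^{2\beta}\,A_R,\qquad
A_R:=R\iiint \check\psi\!\bigl(R(|x-y|-|x'-y|)\bigr)\,d\mu(x)\,d\mu(x')\,d\mu(y).
\]
Thus the theorem reduces to an estimate $A_R\lesssim_\varepsilon R^{-\gamma(s)+\varepsilon}$ with a decay exponent $\gamma(s)>1$ whenever $s>\tfrac74$: given such a bound one chooses $\beta\in\bigl(\tfrac12,\tfrac{\gamma(s)}{2}\bigr)$ and the series converges. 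It is worth noting that $A_R$ is, up to acceptable errors and with $\delta=1/R$, comparable to the $L^2(\mu)$-average $\int\bigl(\int_{\supp\mu}|\mu\ast\sigma_t^{\delta}(y)|^2\,d\mu(y)\bigr)\,dt$ of $\mu$ convolved with $\delta$-thickened circular measures $\sigma_t^\delta$ --- exactly the quantity that governs the pinned Falconer distance problem in Liu's \cite{Liu19} formulation. The only way to exhibit the decay is to exploit the curvature of the circle: a crude size bound for $\check\psi$ ignores the cancellation coming from the $y$-integration and yields only $A_R\lesssim1$, which is useless here.

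The analytic heart of the proof is therefore the bound on $A_R$, and this is where I expect the real difficulty. The \emph{unweighted} statement ``$\gamma(s)>0$ for $s>\tfrac54$'' is precisely (the Liu-type reformulation of) the Guth--Iosevich--Ou--Wang \cite{GIOW20} $\tfrac54$-threshold for the planar distance set; what is needed here is a quantitative refinement showing that the same circle of ideas --- $\ell^2$-decoupling for the circle, refined Strichartz inequalities and weighted $L^2$ restriction estimates, and the dichotomy between a measure being spread out at scale $\delta$ and being concentrated on a small ball --- in fact delivers a decay rate $\gamma(s)$ that is positive and grows linearly past $s=\tfrac54$ with slope $2$, so that $\gamma(s)>1$ exactly for $s>\tfrac74$ (heuristically $\gamma(s)=2s-\tfrac52$, mirroring the fact that the Peres--Schlag interior exponent $\tfrac{d}{2}+1$ is their measure exponent $\tfrac{d}{2}+\tfrac12$ plus the same $\tfrac12$). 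Concretely I would discretize at a single scale $\delta=1/R$, replace $\mu$ by its $\delta$-neighborhood, run the decoupling/extension argument for a $\delta$-neighborhood of the circle while tracking the dependence of every constant on $\delta$ and on $s$, and dispose of the concentrated case by the pigeonhole and induction-on-scales mechanism of \cite{GIOW20}. Once $A_R\lesssim_\varepsilon R^{-(2s-5/2)+\varepsilon}$ is established, choosing $\beta\in(\tfrac12,\,s-\tfrac54)$ finishes the argument; the remaining points --- that the low frequencies contribute a bounded amount and that finiteness of the weighted Fourier integral promotes $\mu^y$ from a measure to an $H^\beta$ function with continuous density --- are routine.
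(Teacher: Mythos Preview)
Your reduction to the decay bound $A_R\lesssim_\varepsilon R^{-\gamma(s)+\varepsilon}$ with $\gamma(s)>1$ for $s>\tfrac74$ is correct, but that bound is the whole theorem and you have not proved it. The GIOW machinery you invoke is designed to show $\mu^y\in L^2$ for a positive-$\mu$-measure set of pins when $s>\tfrac54$ --- in your notation, that $\sum_R A_R<\infty$ --- and there is no step in that argument (good/bad decomposition, refined Strichartz, induction on scales) that supplies the extra half-derivative needed for $\mu^y\in H^{1/2+}$. Your linear heuristic $\gamma(s)=2s-\tfrac52$ is asserted rather than argued; ``tracking the dependence of every constant'' through a proof that was never set up to output pointwise-in-$R$ decay rates is not a proof. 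Note also that your scheme is $L^2$-based throughout, and the prior $L^2$ framework for exactly this problem (Iosevich--Liu \cite{IL19}, built on $L^2$ local smoothing) yields no non-trivial threshold in the plane --- this is precisely the obstruction the paper is written to overcome.

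The paper's argument takes a genuinely different route: it replaces the embedding $H^{1/2+}(\R)\hookrightarrow C^0$ by $L^4_\gamma(\R)\hookrightarrow C^0$ for $\gamma>\tfrac14$, so that only a quarter-derivative of the pinned distance measure is required. The analytic input is then not GIOW but the sharp $L^4$ local smoothing for circular averages that follows from Guth--Wang--Zhang \cite{GWZ20},
\[
\bigl\|\partial_t^{1/4+\varepsilon}A_t f\bigr\|_{L^4(\R^2\times[1,2])}\lesssim_\varepsilon \|f\|_{L^4_{-1/4+2\varepsilon}(\R^2)}.
\]
One dualises $\|\partial_t^{1/4+\varepsilon}A_t\mu\|_{L^4(d\mu\,dt)}$ against $g\in L^{4/3}(d\mu\,dt)$, Littlewood--Paley decomposes, transfers the projector $P_j$ onto $g_t\,d\mu$ via Plancherel, and feeds each piece into the display above; the Frostman condition enters through $\|P_j\mu\|_{L^4_{-1/4+2\varepsilon}}\lesssim 2^{j(-1/4+\frac34(2-s)+O(\varepsilon))}$ and $\|\tilde P_j(g_t\mu)\|_{L^{4/3}}\lesssim 2^{j(2-s)/4+O(\varepsilon)}$, and the resulting geometric series has exponent $\tfrac74-s+O(\varepsilon)$. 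The conceptual point is that the local-smoothing gain of $1/p$ from the $t$-integration exactly offsets the $1/p$ regularity demanded by the embedding, and this trade is favourable only for $p>2$: at $p=2$ there is no local-smoothing gain, which is why an $L^2$ scheme cannot reach below $s=2$ by these methods.
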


Many further variants of the Falconer distance problem exist. There are dimensional variants that obtain a lower bound on $\Hdim(\Delta(E))$ given $\Hdim(E)$, especially in the critical case of $\Hdim(E)=\frac{d}{2}$. See for example the recent works in \cite{DORZ23} and the breakthrough result of Shmerkin and Wang \cite{SW25} as well as the references therein. One can look at other notions of distance aside from the classical Euclidean one. Iosevich, Mourgoglou and Taylor \cite{IMT12} showed that the dimensional threshold $\Hdim(E)>\frac{d}{2}+\frac{1}{2}$ for nonempty interior obtained by Mattila and Sj\"{o}lin extends to a wide range of other metrics. Particularly important to this paper is a result by Iosevich and Liu \cite{IL19} in the pinned setting, which recovers the results of Peres and Schlag as well as improving on some dimensional variants and extending to a variety of metrics. Their Fourier analytic techniques rely on local smoothing inequalities and are very different than the projection type methods of Peres and Schlag. The framework of Iosevich and Liu relies on $L^2$ methods and thus only takes in local smoothing estimates that fit that framework. In this paper we upgrade their framework to handle more general $L^p\rightarrow L^p$ local smoothing estimates, which requires delicate arguments replacing simple $L^2$ orthogonality. Our next main theorem shows how the threshold for nonempty interior of the pinned distance set changes in $\mathbb{R}^d$ given different $L^p\rightarrow L^p$ local smoothing estimates. The hypothesis in the theorem that we can control a norm of a $\partial_t$ derivative will be used in conjuction with one-dimensional Sobolev embedding (in the $t$ variable) to prove a continuity result that implies the conclusion on nonempty interior of the pinned distance set.

\begin{thm}\label{Thm:pinnednonemptyassumingLS} 
Let $d\geq 2$ and $2<p<\infty$ fixed.
Assume that there exists $\eta>0$ such that the following local smoothing estimate holds
    \begin{equation}\label{partiallocalsmoothing}
\left(\int_{1}^{2}\int_{\R^d}|\partial^{\frac{1}{p}+\varepsilon}_tA_t(f)(x)|^pdx dt\right)^{1/p}\lesssim_{\varepsilon} \|f\|_{L^{p}_{-\eta+2\varepsilon}(\R^d)}
    \end{equation}
for every $\varepsilon>0$ sufficiently small.

Then for every compact set $E\subset \R^d$ with $\Hdim(E)>d-\eta$, there exists $y\in E$ such that $\Delta^y(E)$ contains an interval. More generally, given two compact sets $E,F$ in $\R^d$ satisfying 
$$\frac{1}{p'}\Hdim(E)+\frac{1}{p}\Hdim(F)>d-\eta,$$
there exists a probability measure $\mu_F$ in $F$ such that $\Delta^y(E)$ contains an interval for $\mu_F$-almost every $y\in F$. 
\end{thm}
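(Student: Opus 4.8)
The plan is to follow the Iosevich--Liu Fourier-analytic framework for pinned distance sets, but upgrade it so that the $L^2$-orthogonality steps are replaced by the hypothesized $L^p \to L^p$ local smoothing estimate~\eqref{partiallocalsmoothing}. Fix Frostman measures $\mu$ on $E$ and $\nu$ on $F$ with $\mu(B(x,r)) \lesssim r^{s_E}$ and $\nu(B(x,r)) \lesssim r^{s_F}$ where $s_E < \Hdim(E)$, $s_F < \Hdim(F)$ are chosen so that $\frac{1}{p'} s_E + \frac{1}{p} s_F > d - \eta$ still holds. The goal is to produce, for $\nu$-a.e.\ $y$, a function on $(1,2)$ (or some nondegenerate interval of radii) that represents the ``density'' of the pinned distance measure $\Delta^y_\sharp \mu$ and show it is continuous and not identically zero; continuity plus nonvanishing forces $\Delta^y(E)$ to contain an interval. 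Concretely, with $A_t f(x) = \int_{S^{d-1}} f(x - t\omega)\, d\sigma(\omega)$ the spherical average, the relevant quantity is $\int A_t(\mu)(y)\, \psi(t)\, dt$-type expressions, and one studies a smoothed pinned distance density $f^y(t) = A_t(\mu)(y)$ (suitably mollified in $x$ via an approximate identity $\mu * \rho_\delta$), aiming to bound a fractional-derivative Sobolev norm $\|f^y\|_{L^p_t((1,2))}$ or rather $\int_F \|\partial_t^{1/p+\varepsilon} A_t(\mu*\rho_\delta)(y)\|_{L^p_t}^p \, d\nu(y)$ uniformly in $\delta$.

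The key steps, in order, are: (1) Set up the mollified measure $\mu_\delta = \mu * \rho_\delta$ and write $A_t(\mu_\delta)(y)$; reduce the claim ``$\Delta^y(E)$ contains an interval'' to showing that $t \mapsto A_t(\mu)(y)$ (a limit of $A_t(\mu_\delta)(y)$) is a continuous function of $t$ on a fixed subinterval $I \subset (1,2)$ that is positive somewhere, for $\nu$-a.e.\ $y$. Continuity will come from a uniform bound on a fractional Sobolev norm in $t$ of exponent exceeding $1/p$, via Sobolev embedding $W^{1/p + \varepsilon, p}(I) \hookrightarrow C(I)$. (2) Integrate the $t$-Sobolev norm against $d\nu(y)$ and use the local smoothing hypothesis~\eqref{partiallocalsmoothing} applied to $f = \mu_\delta$ (or to a Littlewood--Paley piece of it); this is where the $L^p$ estimate does the work that $L^2$ orthogonality used to do. One gets $\int_F \|\partial_t^{1/p+\varepsilon} A_t(\mu_\delta)(y)\|_{L^p_t(I)}^p \, d\nu(y) \lesssim \int_{\R^d} \|\partial_t^{1/p+\varepsilon} A_t(\mu_\delta)\|_{L^p_t(I)}^p \, d\nu^*(y)$-type bound --- but here is the subtlety: the left side integrates against $\nu$, not Lebesgue measure, so one must insert the Frostman measure $\nu$ into the local smoothing estimate. (3) Handle that by the standard trick of writing $\|g\|_{L^p(d\nu)}^p = \int |g|^p \, d\nu$ and using $\hat\nu$ decay / energy integrals: decompose $\mu_\delta$ into Littlewood--Paley pieces $P_k \mu_\delta$, apply~\eqref{partiallocalsmoothing} to each piece to get $2^{k(1/p - \eta + 2\varepsilon)p}\|P_k\mu_\delta\|_p^p$-type bounds, then pair with $\nu$ using the Frostman condition on $\nu$ to absorb a factor $2^{k(d - s_F)}$ (this is where $\frac1p s_F$ enters), and use the Frostman condition on $\mu$ to control $\|P_k \mu_\delta\|_p$ by $2^{k(d-s_E)/p'}$-type quantities (this is where $\frac{1}{p'} s_E$ enters). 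Summing over $k$ converges precisely when $\frac{1}{p'} s_E + \frac{1}{p} s_F > d - \eta$. (4) Conclude: the uniform-in-$\delta$ bound gives an $L^p(d\nu)$ bound on $y \mapsto \|A_t(\mu)(y)\|_{W^{1/p+\varepsilon,p}_t(I)}$, hence this is finite for $\nu$-a.e.\ $y$, so $t \mapsto A_t(\mu)(y)$ is continuous in $t$ for a.e.\ $y$; a separate (easier) first-moment computation $\int_F \int_I A_t(\mu)(y) \, dt \, d\nu(y) > 0$ shows it is not identically zero for a positive-$\nu$-measure set of $y$, and for such $y$, $A_t(\mu)(y) > 0$ on an open set of $t$, forcing $\Delta^y(E) \supset$ an interval. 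Taking $\mu_F = \nu$ restricted/normalized to this good set finishes the general statement; the case $E = F$ with a single compact set is the special case $p' $ and $p$ weighting a common dimension, recovering ``$\Hdim(E) > d - \eta$''.

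The main obstacle I expect is Step (2)--(3): making the local smoothing estimate~\eqref{partiallocalsmoothing}, which is stated with respect to Lebesgue measure $dx$ on $\R^d$, interact correctly with the Frostman measure $\nu$ in the pin variable. In the original Iosevich--Liu argument this is done with $L^2$ orthogonality (Plancherel), where one can cleanly write everything on the Fourier side and the measure $\nu$ enters only through $\widehat{\nu}$; with a genuine $L^p$ estimate there is no Plancherel, so one needs a substitute --- most likely a careful Littlewood--Paley decomposition of $\mu_\delta$ together with the elementary bound $\int |P_k \mu_\delta(y)|^p \, d\nu(y) \lesssim \|P_k \mu_\delta\|_\infty^{p-?} \cdots$ or, better, an interpolation/duality argument that converts the $dx$ estimate into a $d\nu$ estimate at the cost of the Frostman exponent of $\nu$. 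A clean way to organize this is to prove a single-frequency (fixed Littlewood--Paley scale $2^k$) version: $\big(\int_F \int_I |\partial_t^{1/p+\varepsilon} A_t(P_k \mu_\delta)(y)|^p \, dt \, d\nu(y)\big)^{1/p} \lesssim 2^{k(-\eta + 2\varepsilon)} 2^{k/p} \|P_k \mu_\delta\|_{L^p(d\nu')}$ and then balance the Frostman decay of $\mu$ and $\nu$ against the frequency gain; the triangle inequality in $k$ (rather than square-function orthogonality) is what we can afford here, and convergence of that sum is exactly the dimensional hypothesis. Everything else --- the reduction of ``interval in $\Delta^y(E)$'' to continuity-plus-nonvanishing of the spherical average, the first-moment lower bound, and the passage $\delta \to 0$ --- is by now standard and parallels~\cite{IL19,PS00}.
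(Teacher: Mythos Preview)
Your proposal is correct and follows essentially the same route as the paper: reduce to continuity of the pinned density via Sobolev embedding $L^p_{1/p+\varepsilon}(\R)\hookrightarrow C$, mollify, Littlewood--Paley decompose $\mu$, apply the local smoothing hypothesis to each frequency piece, and close with Frostman bounds on both measures so that the single-scale estimates sum precisely under $\tfrac{1}{p'}s_E+\tfrac{1}{p}s_F>d-\eta$. The paper resolves the obstacle you flag in Steps~(2)--(3) exactly as you anticipate: it dualizes the $L^p(d\mu_F\,dt)$ norm against $g\in L^{p'}$, uses the frequency localization of $A_t P_j$ to replace $g_t\mu_F$ by $\tilde P_j(g_t\mu_F)$, and then bounds $\|\tilde P_j(g_t\mu_F)\|_{L^{p'}(dy\,dt)}\lesssim 2^{j(d-s_F)/p}\|g\|_{L^{p'}(d\mu_F\,dt)}$ via a simple $L^1$--$L^\infty$ interpolation lemma (the paper's Lemma~\ref{lem proj of product}), which is the clean $L^p$ substitute for Plancherel you were looking for.
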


If the local smoothing conjecture (see Conjecture \ref{conj: localsmoothingwaveeq}) were true in all dimensions, Theorem \ref{Thm:pinnednonemptyassumingLS} would yield the dimensional threshold $\frac{d}{2} + 1 - \frac{1}{2d}$, which would improve on the threshold $\frac{d}{2}+1$ obtained by Peres and Schlag in all dimensions. Theorem \ref{Thm:pinnednonemptyin2dbasic} realizes this optimal threshold when $d=2$. We however note that it is widely believed that the true threshold should be $\frac{d}{2}$, as in the original Falconer distance problem. To further show how robust our result is, we obtain an improvement on the Peres-Schlag dimensional threshold when $d=3$.

\begin{thm}\label{Thm:pinnednonemptyin3dbasic} Let $E\subset \R^3$ be a compact set with $\Hdim(E)>\frac{12}{5}$. Then, there exists $y\in E$ such that $\Delta^y(E)$ contains an interval.
\end{thm}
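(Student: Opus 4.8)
The plan is to deduce Theorem~\ref{Thm:pinnednonemptyin3dbasic} from Theorem~\ref{Thm:pinnednonemptyassumingLS} with $d=3$: it is enough to verify the local smoothing hypothesis~\eqref{partiallocalsmoothing} for one well-chosen exponent $p\in(2,\infty)$ and the value $\eta$ with $d-\eta=\frac{12}{5}$, that is $\eta=\frac35$, since the single-set conclusion of Theorem~\ref{Thm:pinnednonemptyassumingLS} then delivers exactly the threshold $\Hdim(E)>\frac{12}{5}$. Thus the entire content is to establish~\eqref{partiallocalsmoothing} in $\R^3$ with $\eta=\frac35$.

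First I would reduce~\eqref{partiallocalsmoothing} to a standard local smoothing estimate for the half-wave propagators $e^{\pm it\sqrt{-\Delta}}$. Writing $A_t$ as the Fourier multiplier with symbol $\widehat{d\sigma}(t\xi)$ and using the stationary-phase expansion $\widehat{d\sigma}(r)=\sum_{\pm}e^{\pm ir}b_\pm(r)$ for $r\gtrsim1$, where $b_\pm$ are symbols of order $-\frac{d-1}{2}=-1$ (the range $r\lesssim1$ contributing only a smooth, harmless piece), one gets, modulo a remainder that is trivially controlled uniformly for $t\in[1,2]$ together with all of its $t$-derivatives,
\[
\partial_t^{\frac1p+\varepsilon}A_t\;=\;\sum_{\pm}e^{\pm it\sqrt{-\Delta}}\,\widetilde m_\pm(t,D),
\]
where the $t$-derivative has passed through the propagator as a factor $(\pm i|\xi|)^{\frac1p+\varepsilon}$ and $\widetilde m_\pm(t,\cdot)$ is a symbol of order $-1+\frac1p+\varepsilon$ uniformly in $t\in[1,2]$. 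Absorbing $\widetilde m_\pm(t,D)$ into the Sobolev norm of $f$ and inserting a local smoothing estimate of the form $\|e^{\pm it\sqrt{-\Delta}}g\|_{L^p([1,2]\times\R^d)}\lesssim_\delta\|g\|_{L^p_{\gamma(p)+\delta}(\R^d)}$ reduces~\eqref{partiallocalsmoothing} to the arithmetic requirement
\[
\gamma(p)\;\le\;\frac{d-1}{2}-\frac1p-\eta\;=\;\frac25-\frac1p,
\]
the small $\varepsilon,\delta$ losses being absorbed in the $2\varepsilon$ on the right of~\eqref{partiallocalsmoothing}.

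Then I would take $p=\frac{10}{3}$. Since $\frac{10}{3}>\frac{2d}{d-1}=3$, the requirement becomes $\gamma(\tfrac{10}{3})\le\tfrac1{10}$, which is exactly the sharp local smoothing exponent $(d-1)(\tfrac12-\tfrac1p)-\tfrac1p$ at that value of $p$, and this is precisely what the currently known local smoothing estimates for the wave equation in $\R^3$ supply, being sharp (up to an arbitrarily small $\varepsilon$-loss) for all $p\ge\frac{10}{3}$. Feeding $\eta=\frac35$ into Theorem~\ref{Thm:pinnednonemptyassumingLS} produces the threshold $\Hdim(E)>\frac{12}{5}$; if the known sharp range happens to be open at $\frac{10}{3}$, one simply runs the argument at each $p>\frac{10}{3}$ in the range, using that $\{\Hdim(E)>\frac{12}{5}\}=\bigcup_{p>10/3}\{\Hdim(E)>3-\tfrac2p\}$, so nothing is lost. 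The stationary-phase bookkeeping and the treatment of the remainder and low-frequency terms are routine, so the only substantive ingredient is the local smoothing input, and it is also where the obstruction to doing better lies: the trivial fixed-time bounds alone give only $\eta=\frac{d-2}{p}=\frac1p$, while $\ell^2$-decoupling alone gives $\gamma(p)=\frac{d-1}{2}(\tfrac12-\tfrac1p)=\tfrac12-\tfrac1p$ for $p\le4$, which undershoots $\frac25-\frac1p$ by exactly $\frac1{10}$ at every $p$; hence the argument genuinely needs a local smoothing estimate improving on decoupling below the critical exponent $4$, and closing the remaining gap to the conjecturally optimal threshold $\frac{d+2}{2}-\frac1{2d}=\frac73$ would require pushing sharp local smoothing in $\R^3$ all the way down to $p=3$, which is the hardest remaining case of the local smoothing Conjecture~\ref{conj: localsmoothingwaveeq} in $\R^3$.
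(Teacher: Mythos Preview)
Your proposal is correct and follows essentially the same route as the paper: the paper deduces Theorem~\ref{Thm:pinnednonemptyin3dbasic} from Theorem~\ref{Thm:pinnednonemptyassumingLS} via Corollary~\ref{Cor:pinnednonemptyinhigherd}, which in turn rests on Corollary~\ref{cor: sphereicallocalsmoothinghigherd} (the transfer of the Gan--Wu local smoothing estimate \cite{GanWu} at $p_3=\tfrac{10}{3}$ to spherical averages through the stationary-phase expansion of $\widehat\sigma$, exactly as you outline). Your reduction $\gamma(p)\le \tfrac{d-1}{2}-\tfrac1p-\eta$ and the choice $p=\tfrac{10}{3}$, $\eta=\tfrac35$ match the paper's computation $d-\tfrac{d-1}{p_d}=\tfrac{12}{5}$; the only cosmetic difference is that the paper carries out the wave-to-spherical-average transfer Littlewood--Paley piece by piece (see the discussion around~\eqref{representationofAinwavepieces}) rather than in one stroke, and explicitly names the input as \cite{GanWu}.
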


Although we don't recover the threshold $\frac{7}{3}$, that a solution of the local smoothing conjecture in $\mathbb{R}^{3+1}$ would yield, we improve on the Peres-Schlag threshold of $\frac{5}{2}$ and make it more than half the way towards the best threshold that our techniques could possibly yield.

Theorem \ref{Thm:pinnednonemptyassumingLS} is conditional on given local smoothing estimates. Using the latest results on local smoothing estimates we get the following unconditional corollary to Theorem \ref{Thm:pinnednonemptyassumingLS}. In the case $d=2$ we use the very recent sharp local smoothing inequalities in $\R^{2+1}$ proved in \cite{GWZ20} for the solutions of the wave equation in $\R^{2+1}$ or alternatively their extensions to Fourier integral operators in $\R^{2+1}$ obtained in \cite{GLMX23}. In the case $d \geq 3$, we use the even more recent results of \cite{GanWu}.

\begin{cor}\label{Cor:pinnednonemptyinhigherd}
Let $E,F\subset\mathbb{R}^d$ be compact sets satisfying 
\[\begin{cases}
    \frac{3}{4}\Hdim(E)+\frac{1}{4}\Hdim(F)>\frac{7}{4}, \text{ if }d=2,\\
    \frac{7}{10}\Hdim(E)+\frac{3}{10}\Hdim(F)>3-\frac{3}{5}=\frac{12}{5}, \text{ if }d=3, \text{ or }\\
    \frac{1}{p_d'}\Hdim(E)+\frac{1}{p_d}\Hdim(F)>d-\eta(p_d), \text{ for }p_d=2+\frac{8}{3d-4} \text{ and } \eta(p_d)=\frac{d-1}{p_d}\text{if }d\geq 4.
\end{cases}\]
Then there exists a probability measure $\mu_F$ in $F$ such that $\nonemptyaty$ for $\mu_F$ a.e $y\in E$. 
\end{cor}

Theorems \ref{Thm:pinnednonemptyin2dbasic} and \ref{Thm:pinnednonemptyin3dbasic} now follow from Corollary \ref{Cor:pinnednonemptyinhigherd} by simply taking $E=F$. To show that Corollary \ref{Cor:pinnednonemptyinhigherd} follows from Theorem \ref{Thm:pinnednonemptyassumingLS} we show that the operators that arise in our setting fit the setup from \cite{GWZ20} and \cite{GanWu} in Section \ref{sec: tools}. More precisely, Corollary \ref{Cor:pinnednonemptyinhigherd} follows from Theorem \ref{Thm:pinnednonemptyassumingLS} and Corollaries \ref{cor: sphereicallocalsmoothingd2} and \ref{cor: sphereicallocalsmoothinghigherd} in Section \ref{sec: tools}.

When $d\geq 4$ and if $E=F$ Corollary \ref{Cor:pinnednonemptyinhigherd} yields a threshold $\frac{d}{2}+\frac{7d-4}{6d}$ that matches the Peres-Schlag dimensional threshold when $d=4$ and is worse when $d\geq 5$. Our result is more flexible with two different sets, $E$ and $F$, and immediately implies the following exceptional set estimates for the bad pins of $E$.

\begin{cor}
    Let $E\subset \R^d$ be a compact set. Then 
     \[\Hdim(\{x\in \R^d\colon \textnormal{int}(\Delta^y(E))=\emptyset\})\leq\begin{cases} 7-3\Hdim(E), \text{ if }d=2,\\
       8-\frac{7}{3}\Hdim(E), \text{ if }d=3,\\
    \frac{3d^2+7d-4}{3d-4}-\frac{3d+4}{3d-4}\Hdim(E)\text{ if }d\geq 4.
    \end{cases}\]
\end{cor}

It is worth comparing the corollary above with the corresponding corollary in Iosevich and Liu paper \cite[Corollary 1.4]{IL19},
which gives
\[\Hdim(\{x\in \R^d\colon \text{int}(\Delta^y(E))=\emptyset\})\leq
    \begin{cases}
      8-2\Hdim(E)\text{ if }d=3,\\
      \frac{(d+1)^2}{d-1}-\frac{d+1}{d-1}\Hdim(E), \text{ if }d\geq 4, 
    \end{cases} 
    \]
and does not provide information when $d=2$. Our exceptional set estimates improve upon those of Iosevich--Liu in dimension $d=2$ (where they obtain no result) and in dimension $d=3$. For $d \ge 4$, our bound is sharper precisely in the regime $\Hdim(E) > d - 3 + \frac{4}{d}$.

For a compact set $E\subset\mathbb{R}^d$ one can consider a generalization of the distance set to a tree $\mathcal{T}$, denoted by $\Delta_{\mathcal{T}}(E)$, see precise definition in Section \ref{sec trees}. Bennett, Iosevich and Taylor showed in \cite{BIT16} for particular trees, called chains, that if $\Hdim(E)>\frac{d+1}{2}$ then $\Delta_{\mathcal{T}}(E)$ has nonempty interior. This result was extended to all finite trees by Iosevich and Taylor \cite{IT19}. In a similar spirit, one can also consider a pinned variant $\Delta_{\mathcal{T},v_0}^{x_0}(E)$ where the tree has the distinguished vertex $v_0$ pinned at $x_0\in E$. The third listed author and Taylor \cite{OuTaylor} showed that when $d=2$ if $\Hdim(E)>\frac{5}{4}$ the Lebesgue measure of $\Delta_{\mathcal{T},v_0}^{x_0}(E)$ is positive, for some $x_0\in E$. No results exist in the literature on nonempty interior for pinned trees in any dimension. Our second main result provides the first result in the plane in this direction.

\begin{thm}\label{Thm:pinnednonemptytreein2d}
    Let $E\subset \R^2$ be a compact set with $\Hdim(E)>7/4$. Let $(\mathcal{T},v_0)$ be any finite tree with $n+1$ vertices, including a distinguished pinned vertex $v_0$. Then there exists a point $x_0\in E$ such that $\Delta_{\mathcal{T},v_0}^{x_0}(E)$ has nonempty interior.
\end{thm}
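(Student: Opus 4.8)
The plan is to prove Theorem~\ref{Thm:pinnednonemptytreein2d} by induction on the number of vertices of $\mathcal{T}$, using Theorem~\ref{Thm:pinnednonemptyin2dbasic} — more precisely the estimate underlying it — both as the base case and as the engine of the inductive step. Fix once and for all a compactly supported Frostman measure $\mu$ on $E$ with exponent $s>7/4$. The input we extract from the proof of Theorem~\ref{Thm:pinnednonemptyin2dbasic} is a \emph{weighted} pinned distance estimate: there are a Borel set $G_\ast\subseteq E$ with $\mu(G_\ast)=1$ and a constant $C_0=C_0(\mu)<\infty$ so that for every $y\in G_\ast$ the linear map $\phi\mapsto\kappa_{y,\phi}$, sending $\phi$ to the density of the push-forward $(\pi_y)_\ast(\phi\,d\mu)$ of $\phi\,d\mu$ under $\pi_y\colon x\mapsto|x-y|$, is bounded from $L^{p'}(\mu)$ into $C_b(\R)$ (here $p=4$, $p'=4/3$) with
\[
\|\kappa_{y,\phi}\|_{\infty}\ \le\ C_0\,\|\phi\|_{L^{p'}(\mu)}\ \le\ C_0\,\|\phi\|_{L^\infty(\mu)} .
\]
This requires no new harmonic analysis: inserting a bounded weight into the argument behind Theorem~\ref{Thm:pinnednonemptyin2dbasic} only inflates the relevant quantities by $\|\phi\|_{L^{p'}(\mu)}$, and a routine separability argument (the a priori bound together with continuity of $\phi\mapsto\kappa_{y,\phi}$) produces one good set $G_\ast$ valid for all the weights we shall use. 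The entire novelty is in how this estimate is iterated along the tree.

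Root $\mathcal{T}$ at $v_0$; let $n$ be its number of edges, write $e_v=\{v,\mathrm{par}(v)\}$ for the edge above a non-root vertex $v$, let $T_v$ be the subtree hanging below and including $v$, and $E_v=\{e_w:w\in T_v\setminus\{v\}\}$ its internal edges. For $y\in E$ let $M_v[y]$ be the measure on $\R^{E_v}$ obtained by pushing $\mu^{\otimes(T_v\setminus\{v\})}$ through the edge-length map with the position of $v$ set equal to $y$; then $M_{v_0}[x_0]$ is the push-forward of the probability measure $\mu^{\otimes n}$ under $\Phi_{x_0}\colon E^{n}\to\R^{n}$, $(x_w)\mapsto(|x_w-x_{\mathrm{par}(w)}|)_w$, whose image is exactly $\Delta^{x_0}_{\mathcal{T},v_0}(E)$. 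Since the subtrees below a vertex $v$ are conditionally independent given the position of $v$, the measure $M_v[y]$ factors over the children $c_1,\dots,c_k$ of $v$:
\[
M_v[y]\ =\ \bigotimes_{j=1}^{k} N_{c_j}[y],\qquad
N_c[y]\ :=\ \int_{E}\Big(\delta_{|x_c-y|}\otimes M_c[x_c]\Big)\,d\mu(x_c)\quad\text{on }\ \R^{\{e_c\}}\times\R^{E_c}.
\]
Thus everything reduces to $N_c[y]$, and $N_c[y]$ is precisely a weighted pinned distance measure of the kind controlled above: if $M_c[x]$ has density $\rho_c[x](\cdot)$ on $\R^{E_c}$, then $N_c[y]$ has density $(s,\mathbf t)\mapsto\kappa_{y,\phi_{\mathbf t}}(s)$, where $\phi_{\mathbf t}(x):=\rho_c[x](\mathbf t)$.

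The inductive claim is: for each vertex $v$ there is a Borel set $G_v\subseteq E$ with $\mu(G_v)=1$ such that for every $y\in G_v$ the measure $M_v[y]$ has a continuous density $\rho_v[y]$ on $\R^{E_v}$, with $\sup_{y\in G_v}\|\rho_v[y]\|_\infty\le C_0^{\,|E_v|}$ and $(y,\mathbf t)\mapsto\rho_v[y](\mathbf t)$ jointly measurable. For a leaf this is vacuous, as $E_v=\emptyset$. For the inductive step at $v$ with children $c_1,\dots,c_k$: the inductive hypothesis at each $c_j$ gives weights $\phi^{(j)}_{\mathbf t}=\rho_{c_j}[\cdot\,](\mathbf t)\in L^{p'}(\mu)$ with $\|\phi^{(j)}_{\mathbf t}\|_{L^{p'}(\mu)}\le C_0^{\,|E_{c_j}|}$, and $\mathbf t\mapsto\phi^{(j)}_{\mathbf t}$ is continuous into $L^{p'}(\mu)$ (dominated convergence: the weights converge $\mu$-a.e.\ and are uniformly bounded); hence, for $y$ in the full-measure set $G_v:=G_\ast\cap\bigcap_j G_{c_j}$, the density of $N_{c_j}[y]$ equals the jointly continuous function $(s,\mathbf t)\mapsto\kappa_{y,\phi^{(j)}_{\mathbf t}}(s)$ with sup-norm at most $C_0\cdot C_0^{\,|E_{c_j}|}$. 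Multiplying over $j$ and using $\sum_j(|E_{c_j}|+1)=|E_v|$ yields $\rho_v[y]\big((s_j,\mathbf t^{(j)})_j\big)=\prod_j\kappa_{y,\phi^{(j)}_{\mathbf t^{(j)}}}(s_j)$ with the stated bound, closing the induction. (Crucially, a $\mu$-random internal vertex position $x_{c_j}$ lies in $G_{c_j}$ almost surely, which is exactly what legitimizes applying the inductive hypothesis at that random point inside the defining integral of $N_{c_j}[y]$.) Applying the claim at $v=v_0$: for every $x_0\in G_{v_0}$ — and $G_{v_0}$ meets $\supp\mu\subseteq E$, so such $x_0\in E$ exists — the measure $M_{v_0}[x_0]=(\Phi_{x_0})_\ast\mu^{\otimes n}$ has a continuous density, which is nonnegative with total integral $1$ and hence strictly positive on a nonempty open set $U\subseteq\R^n$; since $U\subseteq\supp M_{v_0}[x_0]\subseteq\Phi_{x_0}\big((\supp\mu)^n\big)\subseteq\Delta^{x_0}_{\mathcal{T},v_0}(E)$, the set $\Delta^{x_0}_{\mathcal{T},v_0}(E)$ has nonempty interior.

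The only point requiring genuine care — and the novel ingredient — is organizing the induction so that the analytic estimate composes: one must read the pinned distance theorem as a \emph{bounded transfer operator} $L^{p'}(\mu)\to C_b(\R)$ so that its outputs may be re-inserted as weights, check that the constants grow only geometrically in $|\mathcal{T}|$ (whence the harmless $C_0^{|E_v|}$), and handle the fact that the pins at internal vertices of $\mathcal{T}$ are themselves $\mu$-random points (resolved by the full-measure sets $G_v$). The two-dimensional harmonic analysis enters only through the black box of Theorem~\ref{Thm:pinnednonemptyin2dbasic}; everything else is bookkeeping on the rooted tree, which is why the threshold $7/4$ is inherited unchanged.
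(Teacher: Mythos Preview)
Your inductive scheme is clean and the tree bookkeeping is correct, but the black box you extract from Theorem~\ref{Thm:pinnednonemptyin2dbasic} is stronger than what its proof actually delivers, and this is a genuine gap. The proof of Theorem~\ref{Thm:pinnednonemptyassumingLS} (of which Theorem~\ref{Thm:pinnednonemptyin2dbasic} is the $d=2$, $p=4$ case) establishes only the \emph{averaged} bound
\[
\left(\int \|\kappa_{y,\phi}\|_{L^p_\gamma(dt)}^{\,p}\,d\mu(y)\right)^{1/p}\ \lesssim\ \|\phi\|_{L^{p}(\mu)}
\]
(this is what inserting a weight into \eqref{reduction1} gives, via Lemma~\ref{lem proj of product}); it does \emph{not} produce a full-measure set $G_\ast$ on which the operator norm of $T_y:\phi\mapsto\kappa_{y,\phi}$ is bounded by a single constant $C_0$. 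Your ``routine separability argument'' cannot bridge this: from an $L^p(d\mu(y))$-averaged estimate one gets, for each fixed $\phi$, only finiteness of $\|\kappa_{y,\phi}\|_\infty$ for $\mu$-a.e.\ $y$, with no control uniform in $y$, and nothing prevents $\sup_{\|\phi\|\le 1}\|\kappa_{y,\phi}\|_\infty$ from being essentially unbounded on every set of full $\mu$-measure. Already for $\phi\equiv 1$ your claim would force $\sup_t A_t(\mu)(y)$ to be $\mu$-essentially bounded, which the pinned distance argument does not show.

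This is exactly the obstacle the paper's induction (Proposition~\ref{lem tree induction}) is built around. There the inductive quantity is $\|\partial_{t_i}^{\gamma} A^{(\mathcal{T},v_0)}_{\mathbf t}(\ldots)\|_{L^p(d\mu\,dt_i)}$, an $L^p$-average over the pin, not a uniform sup-norm; when a pointwise-in-$y$ bound is genuinely needed (to pull sibling factors out in $L^\infty$ at a branching vertex) one passes, via Sobolev embedding and Chebyshev, to a subset of measure $\ge 1-\epsilon$ on which the sup is uniformly controlled --- hence the $(1-\epsilon)$-restriction measures $\mu_n$ and the sets $E_\beta$ in the proof. Your scheme can likely be repaired along the same lines (replace $\mu(G_v)=1$ by $\mu(G_v)\ge 1-\epsilon_v$, with constants depending on $\epsilon_v$), but doing so forces you to carry an $L^p(d\mu\,dt)$ bound on a fractional $t$-derivative through the induction in order to control $\sup_{\mathbf t}$ \emph{after} restricting the pin --- and at that point you have essentially reconstructed the paper's argument.
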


The previous results on trees \cite{BIT16,IT19,OuTaylor} all used inductive strategies, but in the case of nonempty interior showed boundedness of the density as well as lower bounds and then concluded from direct approximation arguments that the density had to be continuous. Even in the most recent paper of Greenleaf, Iosevich and Taylor \cite{GIT24preprint}, which utilizes the strong microlocal analysis framework developed by the authors in previous papers \cite{GIT21,GIT22,GIT24}, they still follow a similar scheme and obtain the same dimensional threshold $\frac{d+1}{2}$. Our approach is built around showing boundedness for derivatives of the densities and utilizing local smoothing estimates. In order to make this work, we need to develop a novel induction strategy that handles all the derivatives appropriately. If implemented naively, taking all the required derivatives would lead to a much worse threshold that in most cases would be trivial. Our methods are robust, as in the case for distances, and extend to higher dimensions and can showcase the reliance on the $L^p\rightarrow L^p$ local smoothing estimates. This is the content of our next main theorem.

\begin{thm}\label{Thm:pinnednonemptytreeinanydimensionwithlocalsmoothing}
Let $d\geq 2$ and $2<p<\infty$ fixed.
Assume that there exists $\eta>0$ such that a local smoothing estimate (\ref{partiallocalsmoothing}) holds for every $\varepsilon>0$ sufficiently small.
Let $E\subset \R^d$ be a compact set with $\Hdim(E)>d-\eta$. Let $(\mathcal{T},v_0)$ be any finite tree with $n+1$ vertices, including a distinguished pinned vertex $v_0$. Then there exists a point $x_0\in E$ such that $\Delta_{\mathcal{T},v_0}^{x_0}(E)$ has nonempty interior.
\end{thm}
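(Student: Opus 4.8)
The plan is to deduce the theorem from the continuity of a density, and to prove that continuity by an induction over the tree that discharges exactly one fractional derivative of order $\tfrac1p+\varepsilon$ per edge against the local smoothing estimate \eqref{partiallocalsmoothing}. Fix a Frostman measure $\mu$ on $E$ of exponent $s$ with $d-\eta<s<\Hdim(E)$; for $\varepsilon>0$ small enough the finiteness of the $(d-\eta+2\varepsilon)$-energy of $\mu$ gives $\mu\in L^p_{-\eta+2\varepsilon}(\R^d)$, uniformly after mollification. Write $v_0,v_1,\dots,v_n$ for the vertices, $v_0$ the pin, and for $x_0\in E$ let $\lambda_{x_0}$ be the pushforward of $\mu^{\otimes n}$ (one factor per unpinned vertex) under the tree--distance map $\Phi_{x_0}(x_1,\dots,x_n)=\bigl(|x_{a(e)}-x_{b(e)}|\bigr)_{e\in E(\mathcal{T})}\in\R^n$. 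Since $\Phi_{x_0}(E^n)$ is compact and contains $\supp\lambda_{x_0}$, it suffices to find $x_0$ for which $\lambda_{x_0}$ has a continuous density on $(0,\infty)^n$; as $\lambda_{x_0}((0,\infty)^n)=\mu(E)^n>0$, such a density is positive somewhere, producing a ball in $\Delta_{\mathcal{T},v_0}^{x_0}(E)$. I will mollify $\mu$ at scale $\delta$ to $f^\delta=\mu*\phi_\delta$, obtaining a smooth density $\rho^\delta_{x_0}$ for the corresponding pushforward, prove bounds uniform in $\delta$, and pass to the limit by Arzel\`a--Ascoli.

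The regularity mechanism is the Sobolev embedding for dominating mixed smoothness: on a bounded box in $\R^n$, a uniform bound on $\prod_{i\in S}\partial_{t_i}^{1/p+\varepsilon}\rho^\delta$ in $L^p$ for every $S\subseteq\{1,\dots,n\}$ yields a uniform bound in some $C^{0,\alpha}$, because $\tfrac1p+\varepsilon>\tfrac1p$. The key point, and the reason the threshold matches that of Theorem~\ref{Thm:pinnednonemptyassumingLS}, is that each edge is charged with only one such fractional derivative: if instead one asked for a full derivative in each $t_i$, the surplus $\tfrac1{p'}$ derivatives would have to be supplied by $\mu$ and would force a much larger Hausdorff dimension hypothesis. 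So the task reduces to bounding, uniformly in $\delta$, quantities $\int_E\bigl\|\prod_{i\in S}\partial_{t_i}^{1/p+\varepsilon}\rho^\delta_{x_0}\bigr\|_{L^p}^p\,d\mu(x_0)$, after restricting each distance coordinate to a dyadic range and then summing over dyadic boxes using the scaling of \eqref{partiallocalsmoothing}.

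To run the induction, pick a leaf $v_n$ adjacent to $v_j$ and integrate out $x_n$ first: this installs at $v_j$ the weight $W(x_j,t_n)=c\,t_n^{d-1}(A_{t_n}f^\delta)(x_j)$, so that $\rho^\delta_{\mathcal{T},x_0}$ becomes the density of the weighted tree--$\mathcal{T}'$ configuration with $\mathcal{T}'=\mathcal{T}\setminus v_n$. Because $W$ depends on no other distance variable, $\partial_{t_n}^{1/p+\varepsilon}$ falls only on $W$, and the inductive hypothesis --- which I will state for an arbitrary tree carrying weights at some of its vertices, with the weights measured in a mixed $L^p$-norm in their spatial variable and in the auxiliary distance variables --- reduces the estimate for $\mathcal{T}$ to the corresponding estimate for $\mathcal{T}'$ with weight $g(x_j,t_n)=\partial_{t_n}^{1/p+\varepsilon}W(x_j,t_n)$, followed by integrating in $t_n\in[1,2]$. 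The latter integration is closed by \eqref{partiallocalsmoothing}, which gives $\int_1^2\|\partial_{t_n}^{1/p+\varepsilon}A_{t_n}f^\delta\|_{L^p(\R^d)}^p\,dt_n\lesssim\|f^\delta\|_{L^p_{-\eta+2\varepsilon}}^p\lesssim\|\mu\|_{L^p_{-\eta+2\varepsilon}}^p$. The base case of a single edge is precisely the proof of Theorem~\ref{Thm:pinnednonemptyassumingLS} with $E=F$.

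The main obstacle is making the inductive hypothesis self-reproducing at internal vertices of high degree. Peeling the children of such a vertex one after another multiplies the weights sitting on it, and a product of the rough functions $\partial_{t_c}^{1/p+\varepsilon}A_{t_c}f^\delta$ is not controlled in $L^p_{-\eta+2\varepsilon}$ by the product of the individual norms; a naive H\"older split would demand a local smoothing estimate at an exponent of the form $pk$ and destroy the threshold. The fix is to keep each unpinned vertex's spatial variable ``live'' until that vertex itself is peeled, so that a weight is never converted from a measure to a function prematurely: when \eqref{partiallocalsmoothing} is applied at a vertex carrying a weight $g$, one passes from $\|g\,d\mu^\delta\|_{L^p_{-\eta+2\varepsilon}}$ back to $\|g\|_{L^p(d\mu^\delta)}$ via a Sobolev restriction (trace) estimate, available precisely because $s>d-\eta\ge d-\eta p'$ and the mollified measures $\mu^\delta$ are Frostman of exponent $s$ uniformly in $\delta$; only at a genuine leaf of the shrinking tree is $d\mu^\delta$ replaced by Lebesgue measure and a fresh copy of \eqref{partiallocalsmoothing} invoked. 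Carrying out this bookkeeping so that every edge contributes exactly one factor $\partial_{t_i}^{1/p+\varepsilon}$ and every vertex exactly one application of the local smoothing estimate is the heart of the argument; once the uniform bound holds, Arzel\`a--Ascoli produces for $\mu$-a.e.\ $x_0$ a continuous density on every dyadic box, hence on $(0,\infty)^n$, and the proof concludes as above.
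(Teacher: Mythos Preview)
Your inductive scheme via dominating mixed smoothness has a genuine gap at vertices of degree at least two, and the star graph already exposes it. Consider a tree with center $v_0$ (the pin) and two leaves $v_1,v_2$: the density is $A_{t_1}\mu_1(x_0)\cdot A_{t_2}\mu_2(x_0)$, and your framework asks for
\[
\int \bigl|\partial_{t_1}^{1/p+\varepsilon}A_{t_1}\mu_1(x_0)\bigr|^{p}\,\bigl|\partial_{t_2}^{1/p+\varepsilon}A_{t_2}\mu_2(x_0)\bigr|^{p}\,d\mu_0(x_0)\,dt_1\,dt_2
=\int h_1(x_0)^{p}h_2(x_0)^{p}\,d\mu_0(x_0),
\]
where $h_i(x)=\|\partial_{t_i}^{1/p+\varepsilon}A_{t_i}\mu_i(x)\|_{L^p(dt_i)}$. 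The base case only places each $h_i$ in $L^p(d\mu_0)$, not in $L^{2p}(d\mu_0)$, so the product is not controlled. Your proposed fix---passing from $\|g\,d\mu\|_{L^p_{-\eta+2\varepsilon}}$ to $\|g\|_{L^p(d\mu)}$ by a trace/restriction estimate---is correct as far as it goes (this is Lemma~\ref{lem proj of product} after Littlewood--Paley), but it does nothing for the product: once you land in $\|g_1g_2\cdots g_k\|_{L^p(d\mu)}$ you still need $L^{pk}$ integrability of each factor, which the local smoothing hypothesis at the single exponent $p$ does not supply. Your own diagnosis of the obstacle is right; the remedy you sketch does not close it.

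The paper avoids this entirely by asking for strictly less: only a \emph{single} fractional derivative $\partial_{t_i}^{1/p+\varepsilon}$ at a time, uniformly in the remaining $t_j$'s, and then deducing joint continuity by a telescoping H\"older argument (see the display \eqref{continuityofnux} in the $2$-chain case). Two further devices make that work. First, a \emph{restriction-measure} step: one passes to a subset of positive $\mu$-measure on which $\sup_{\mathbf t}|\nu_{H_l,v_0}^{x}(\mathbf t_l)|\leq N$ for all subtrees $H_l$ except the one carrying the current derivative; on that set the undifferentiated factors are bounded in $L^\infty$ and pull out harmlessly (this is exactly what resolves the star graph). Second, when $v_0$ is a leaf one must distinguish two subcases: if the derivative sits on the outermost $A_{t_1}$ one invokes the local smoothing hypothesis~\eqref{partiallocalsmoothing}; if it sits on an inner $A_{t_l}$ one cannot integrate in $t_1$ and must instead use the fixed-time spherical decay $\|A_{t_1}P_jh\|_{L^p}\lesssim 2^{-j(d-1)/p}\|h\|_{L^p}$ (Lemma~\ref{decayforpieceofAt}). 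The hypothesis $\eta\leq\frac{d-1}{p}$ enters precisely here, to ensure that fixed-time decay is at least as strong as the smoothing gain. Your proposal does not separate these two mechanisms and does not introduce the restriction measures, which are the missing ingredients.
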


Using the best-known $L^p\rightarrow L^p$ local smoothing estimates as stated in Corollaries \ref{cor: sphereicallocalsmoothingd2} and \ref{cor: sphereicallocalsmoothinghigherd} in Section \ref{sec: tools}, coming from \cite{GWZ20} and \cite{GanWu}, this yields Theorem \ref{Thm:pinnednonemptytreein2d} in the plane and the following estimates in higher dimensions, which are all new in the literature.

\begin{thm}\label{Thm:pinnednonemptytreeinhigherdim}
    Let $E\subset \R^d$, $d\geq 3$ be a compact set with $\Hdim(E)>\frac{12}{5}$ when $d=3$ or $\Hdim(E)>\frac{d}{2}+\frac{7d-4}{6d}$ when $d\geq 4$. Let $(\mathcal{T},v_0)$ be any finite tree with $n+1$ vertices, including a distinguished pinned vertex $v_0$. Then there exists a point $x_0\in E$ such that $\Delta_{\mathcal{T},v_0}^{x_0}(E)$ has nonempty interior.
\end{thm}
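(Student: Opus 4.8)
The plan is to deduce Theorem~\ref{Thm:pinnednonemptytreeinhigherdim} from the conditional tree result, Theorem~\ref{Thm:pinnednonemptytreeinanydimensionwithlocalsmoothing}, exactly in the way Theorems~\ref{Thm:pinnednonemptyin2dbasic} and~\ref{Thm:pinnednonemptyin3dbasic} were obtained from Theorem~\ref{Thm:pinnednonemptyassumingLS} via Corollary~\ref{Cor:pinnednonemptyinhigherd}. So the whole proof is a matter of plugging in the sharp (or best-known) $L^p\to L^p$ local smoothing estimates recorded in Section~\ref{sec: tools} (Corollaries~\ref{cor: sphereicallocalsmoothingd2} and~\ref{cor: sphereicallocalsmoothinghigherd}, coming from \cite{GWZ20} and \cite{GanWu}) and verifying that the resulting $(p,\eta)$ pair satisfies the hypotheses of Theorem~\ref{Thm:pinnednonemptytreeinanydimensionwithlocalsmoothing}, in particular the constraint $\eta\le\frac{d-1}{p}$.

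First I would fix, for each $d\ge 3$, the exponent $p=p_d$ from Corollary~\ref{Cor:pinnednonemptyinhigherd}: $p_3=10/3$ and $p_d=2+\frac{8}{3d-4}$ for $d\ge 4$. For these exponents the cited local smoothing results give \eqref{partiallocalsmoothing} with $\eta=\eta(p_d)=\frac{d-1}{p_d}$, which in particular meets the borderline condition $\eta\le\frac{d-1}{p}$ in Theorem~\ref{Thm:pinnednonemptytreeinanydimensionwithlocalsmoothing} with equality, so the hypotheses are satisfied. It then remains to compute the dimensional threshold $d-\eta(p_d)=d-\frac{d-1}{p_d}$. For $d=3$: $d-\frac{d-1}{p_3}=3-\frac{2}{10/3}=3-\frac{6}{10}=3-\frac35=\frac{12}{5}$, giving the stated bound $\Hdim(E)>\frac{12}{5}$. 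For $d\ge 4$: with $p_d=\frac{2(3d-4)+8}{3d-4}=\frac{6d}{3d-4}$ one gets $\frac{d-1}{p_d}=\frac{(d-1)(3d-4)}{6d}$, hence $d-\frac{d-1}{p_d}=\frac{6d^2-(d-1)(3d-4)}{6d}=\frac{6d^2-3d^2+7d-4}{6d}=\frac{3d^2+7d-4}{6d}=\frac{d}{2}+\frac{7d-4}{6d}$, matching the stated threshold.

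Having verified that the pair $(p_d,\eta(p_d))$ feeds correctly into Theorem~\ref{Thm:pinnednonemptytreeinanydimensionwithlocalsmoothing}, I would simply invoke that theorem: for any compact $E\subset\mathbb{R}^d$ with $\Hdim(E)>d-\eta(p_d)$ — i.e.\ $\Hdim(E)>\frac{12}{5}$ when $d=3$ and $\Hdim(E)>\frac{d}{2}+\frac{7d-4}{6d}$ when $d\ge 4$ — and any finite tree $(\mathcal{T},v_0)$ with $n+1$ vertices, there exists $x_0\in E$ such that $\Delta_{\mathcal{T},v_0}^{x_0}(E)$ has nonempty interior. This is precisely the assertion of Theorem~\ref{Thm:pinnednonemptytreeinhigherdim}, completing the proof. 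The only genuine content is the bookkeeping in Section~\ref{sec: tools} showing that the spherical averaging operators $A_t$ relevant to the distance/tree problem do satisfy \eqref{partiallocalsmoothing} with these parameters; granting that (as we may, since those corollaries are established there), the reduction here is entirely arithmetic.

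The main obstacle, such as it is, is not in this deduction but upstream: it lies in proving Theorem~\ref{Thm:pinnednonemptytreeinanydimensionwithlocalsmoothing} itself, where the novel induction on the tree must be carried out while controlling derivatives of the iterated densities so that the accumulated loss of smoothness does not destroy the threshold — this is exactly the point the introduction flags as requiring care (a naive implementation would differentiate too many times and give a trivial bound). Here, assuming that theorem, the sole thing to watch is that the borderline constraint $\eta(p_d)=\frac{d-1}{p_d}$ is genuinely \emph{allowed} (it is, the condition being $\eta\le\frac{d-1}{p}$, closed), and that one may take $E=F$ in the two-set formulation so that the hypothesis $\frac{1}{p'}\Hdim(E)+\frac1p\Hdim(F)>d-\eta$ collapses to $\Hdim(E)>d-\eta$; both are immediate.
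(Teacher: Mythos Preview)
Your proposal is correct and matches the paper's approach exactly: the paper deduces Theorem~\ref{Thm:pinnednonemptytreeinhigherdim} directly from Theorem~\ref{Thm:pinnednonemptytreeinanydimensionwithlocalsmoothing} by plugging in the local smoothing estimates of Corollary~\ref{cor: sphereicallocalsmoothinghigherd} with $p=p_d$ and $\eta=\frac{d-1}{p_d}$, and the arithmetic you carry out (yielding $d-\eta=\frac{12}{5}$ for $d=3$ and $\frac{d}{2}+\frac{7d-4}{6d}$ for $d\ge 4$) is precisely what the paper records after that corollary.
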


If the local smoothing conjecture (see Conjecture \ref{conj: localsmoothingwaveeq}) were to be true when $d\geq 3$ at the critical exponent $p=\frac{2d}{d-1}$, the threshold would improve to $\frac{d+2}{2}-\frac{1}{2d}$. Note that all our results for trees hold with the same dimensional threshold for any finite tree. We finally remark that our results on trees could be stated even more generally, drawing points from different sets in the spirit of Corollary \ref{Cor:pinnednonemptyinhigherd}. For clarity of exposition, we leave the results as they are and leave the details to the interested reader.

Beyond distances and trees, triangles and simplices are perhaps the point configurations that have received the most attention. In this setting there are a number of results on obtaining positive Lebesgue measure of the configuration sets, such as \cite{EHI13,GI12,GILP15}, one that obtains positive Lebesgue measure for pinned configurations \cite{IPPS22preprint}, and a few that obtain nonempty interior of the configuration set \cite{PR23,PR25,GIT22,GIT24,GIT24preprint}. The only results we are aware of that obtain pinned nonempty interior for point configurations with $3$ or more points come from the sequence of papers by Greenleaf, Iosevich and Taylor \cite{GIT22,GIT24} where some particular configurations are handled, such as volumes.

Here is an overview of the organization of the paper. In Section \ref{sec: tools} we present tools used throughout the paper and give preliminaries on local smoothing, including Corollaries \ref{cor: sphereicallocalsmoothingd2} and \ref{cor: sphereicallocalsmoothinghigherd} that translate the current best local smoothing results from \cite{GWZ20} and \cite{GanWu} to our setup. In Section \ref{sec:maindisttheorem} we prove Theorem \ref{Thm:pinnednonemptyassumingLS}, our main result on distances. In Section \ref{sec:toycases} we show a couple of illustrative toy cases for how our induction argument works for trees. In Section \ref{sec trees} we present the proof of Theorem \ref{Thm:pinnednonemptytreeinanydimensionwithlocalsmoothing}, our main result on trees.

\subsection*{Acknowledgments}
The authors are grateful to the anonymous referee for their helpful comments. Y.O. is supported in part by NSF DMS-2142221 and NSF DMS-2055008.

\section{Tools and Notation}\label{sec: tools}

\subsection{Littlewood-Paley pieces and Sobolev spaces}
By $P_jf$  we denote the $j$th Littlewood-Paley projection of $f$, which we sometimes denote as $f^j$ for short. More explicitly, take $\beta_0\in C^{\infty}_c(\R)$ to be a nonnegative function such that $\beta_0(s)\equiv1$ if $|s|\leq 1$ and
$\beta_0=0$ if $|s|\geq 2$. Let $\beta(s):=\beta_0(s)-\beta_0(2s)$ so that $\beta_j(s):=\beta(2^{-j}s)$ is supported in $\{s\colon 2^{j-1}<|s|<2^{j+1}\}$ and 
$\beta_0(s)+\sum_{j=1}^{\infty} \beta(2^{-j}s)\equiv 1.$ Then
\begin{equation}\label{Littlepaley}
    \mathcal{F}(P_0f)(\xi)=\beta_0(|\xi|)\hat{f}(\xi) \text{ and } \mathcal{F}(P_jf )(\xi)=\beta(2^{-j}|\xi|)\hat{f}(\xi) \text{ for all } j\geq 1.
\end{equation}

\noindent For us, 
$\mathcal{F}(f)(\xi):=\hat{f}(\xi)=\int_{\R^d}  f(x)e^{-2\pi ix\cdot \xi}dx$.

We use the following notation for Sobolev spaces. Given  $\gamma\in \R$, define 
$$\|f\|_{L^{p}_{\gamma}(\R^d)}:=\|(\langle \cdot \rangle^{\gamma}\hat{f})^{\vee}\|_{L^{p}(\R^d)}\text{ where }\langle \xi\rangle:=(1+|\xi|^2)^{1/2},$$
and $L_{\gamma}^p(\R^d)=\{f\in \mathcal{S}'(\R^d)\colon (\langle \cdot\rangle^{\gamma} \hat{f})^{\vee}\in L^p\}$.

\begin{rem} \label{rem drop smooth factor}
    Frequently in the sequel, we will encounter the scenario where a function $\varphi$ in a Sobolev space is multiplied by a smooth function that is comparable to $1$ on the support of $\varphi$ (most often a polynomial factor where $\varphi$ is supported on a compact interval away from $0$). We will generally drop such factors without further comment as the Sobolev norm of the product is at most a bounded factor larger than the Sobolev norm of $\varphi$.
\end{rem}

When $\gamma>d/p$ then every function in $L^{p}_{\gamma}(\R^d)$ can be modified in a set of measure zero so that the resulting function is H\"older continuous (see \cite[Section 2.8.1]{Triebel95} for example, where one just needs to recall that $L^{p}_{\gamma}(\R^d)=F_{p,2}^{\gamma}(\R^d)$). We will only need this fact in $d=1$, so we state it below for future reference.

\begin{thm}\label{tool: embeddingthm}
Let $1<p<\infty$, and $0<\alpha:=\gamma-1/p<1$. Then every function in $L^{p}_{\gamma}(\R)$ is $\alpha$-H\"older continuous with
\begin{equation}\label{ineq: embedding}
    \|f\|_{C^{\alpha}(\R)}=\sup_{t\in \R}|f(t)|+\sup_{t\neq s}\frac{|f(t)-f(s)|}{|t-s|^{\alpha}}\leq C \|f\|_{L^{p}_{\gamma}(\R)}.
\end{equation}

\end{thm}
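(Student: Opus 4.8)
The plan is to prove this by the standard Littlewood--Paley argument, relying only on the $L^p$-boundedness of the projections $P_j$ and on two Bernstein-type inequalities, and letting the hypothesis $0<\alpha<1$ absorb all losses into two convergent geometric series. Write $f=\sum_{j\ge 0}P_jf$ and set $f_j:=P_jf$. The two ingredients I would establish first are: (i) the Bernstein gains $\|f_j\|_{L^\infty(\R)}\lesssim 2^{j/p}\|f_j\|_{L^p(\R)}$ and $\|f_j'\|_{L^\infty(\R)}\lesssim 2^j\|f_j\|_{L^\infty(\R)}$, both immediate from the frequency support $\{|\xi|\sim 2^j\}$ of $\widehat{f_j}$; and (ii) the frequency-localized Bessel-potential bound $\|f_j\|_{L^p(\R)}\lesssim 2^{-j\gamma}\|f\|_{L^p_\gamma(\R)}$. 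For (ii) I would write $f_j=\langle D\rangle^{-\gamma}\tilde P_j(\langle D\rangle^{\gamma}f_j)$ with a fattened projector $\tilde P_j$, note that the multiplier $\langle\xi\rangle^{-\gamma}\tilde\beta(2^{-j}\xi)$ equals $2^{-j\gamma}$ times a rescaled fixed smooth bump (up to the harmless $1$ in $\langle\xi\rangle$), so its inverse Fourier transform has $L^1$-norm $O(2^{-j\gamma})$ uniformly in $j$, and then use $\|\tilde P_j\|_{L^p\to L^p}\lesssim 1$ together with $\|\langle D\rangle^{\gamma}f_j\|_{L^p}=\|f_j\|_{L^p_\gamma}\lesssim\|f\|_{L^p_\gamma}$; the $j=0$ piece is the same with $\langle\xi\rangle^{-\gamma}\beta_0(\xi)$, a genuinely compactly supported $C^\infty$ multiplier.

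Combining (i) and (ii) gives $\|f_j\|_{L^\infty}\lesssim 2^{j(1/p-\gamma)}\|f\|_{L^p_\gamma}=2^{-j\alpha}\|f\|_{L^p_\gamma}$, so since $\alpha>0$ the series $\sum_j f_j$ converges uniformly and $\|f\|_{L^\infty}\le\sum_j\|f_j\|_{L^\infty}\lesssim\|f\|_{L^p_\gamma}$; in particular $f$ agrees almost everywhere with a continuous function, which we work with henceforth. For the H\"older seminorm I would fix $t\ne s$, put $h=|t-s|$, pick $N$ with $2^{-N}\sim h$, and split $f(t)-f(s)=\sum_{j\le N}(f_j(t)-f_j(s))+\sum_{j>N}(f_j(t)-f_j(s))$. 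In the low-frequency sum the mean value theorem and (i) give $|f_j(t)-f_j(s)|\le h\|f_j'\|_{L^\infty}\lesssim h\,2^{j(1-\alpha)}\|f\|_{L^p_\gamma}$, and summing over $j\le N$ (a geometric series since $1-\alpha>0$) yields $\lesssim h\,2^{N(1-\alpha)}\|f\|_{L^p_\gamma}\sim h^\alpha\|f\|_{L^p_\gamma}$. In the high-frequency sum the crude bound $|f_j(t)-f_j(s)|\le 2\|f_j\|_{L^\infty}\lesssim 2^{-j\alpha}\|f\|_{L^p_\gamma}$ summed over $j>N$ (geometric since $\alpha>0$) yields $\lesssim 2^{-N\alpha}\|f\|_{L^p_\gamma}\sim h^\alpha\|f\|_{L^p_\gamma}$. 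Adding the two contributions gives $|f(t)-f(s)|\lesssim|t-s|^\alpha\|f\|_{L^p_\gamma}$, which together with the $L^\infty$ bound is exactly \eqref{ineq: embedding}.

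The only step needing genuine care is (ii) --- that the Bessel potential $\langle D\rangle^{-\gamma}$ gains precisely $2^{-j\gamma}$ on $L^p$ when localized to frequency $2^j$ --- and even this is routine, by the Mikhlin multiplier theorem or by direct $L^1$ estimates on the rescaled convolution kernel; everything else is bookkeeping of geometric series in which the roles of the two strict inequalities $\alpha>0$ (convergence of the $L^\infty$ and high-frequency sums) and $\alpha<1$ (convergence of the low-frequency sum) are visible. Since the assertion is precisely the classical embedding $L^p_\gamma(\R)=F^{\gamma}_{p,2}(\R)\hookrightarrow\mathcal C^{\gamma-1/p}(\R)$ into the H\"older--Zygmund space, one could equally well omit the argument and simply cite it, as the authors do.
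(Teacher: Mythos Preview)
Your argument is correct and is the standard Littlewood--Paley proof of the embedding $L^p_\gamma(\R)\hookrightarrow C^\alpha(\R)$ for $\alpha=\gamma-1/p\in(0,1)$. As you yourself note in the last sentence, the paper does not actually prove this statement: it is stated as a tool and referenced to Triebel's book via the identification $L^p_\gamma=F^{\gamma}_{p,2}$. So there is no ``paper's own proof'' to compare against; your write-up simply supplies the routine details that the authors chose to cite rather than include. The argument is sound and nothing is missing.
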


If $\varphi$ is a function of $t\in \R$ we will denote $\partial_t^{\gamma}\varphi(t):=(\langle \tau\rangle^{\gamma} \hat{\varphi}(\tau))^{\vee}(t)$ so that $\|\varphi\|_{L^{p}_{\gamma}(\R)}=\|\partial_t^{\gamma}\varphi\|_{L^p(\R)}$. Although we use the partial derivative notation to denote this fractional derivative, we will make use of it in the sequel as a way to access \textbf{one-dimensional} Sobolev embedding, allowing us to show that certain quantities are continuous.

\subsection{Restricted measures}\label{subsection: restrictmeasures}
Let $\mu$ be a Borel probability measure supported in a compact set $E\subset \R^d$. Given $A\subset E$, with $\mu(A)>0$ we will refer to the  \textbf{restriction} of $\mu$ to $A$ as the probability measure supported in $A$ given by 
\begin{equation}\label{def: restrictionmeasure}
\mu_A(S)=\frac{\mu(S\cap A)}{\mu(A)},\, \text{ for any Borel set }S \text{ in } \R^d.
\end{equation}

If $\mu(A)=1-\epsilon$, for some $\epsilon\in (0,1)$ we will say that $\mu_A$ is an $(1-\epsilon)$-restriction of $\mu$.

\subsection{Preliminaries in local smoothing}

Let $u$ be a solution of the Cauchy problem 
\begin{equation}\label{cauchywaveeq}
    \begin{split}
        \begin{cases}
            (\partial_t^2-\Delta)u(x,t)=0,\, (x,t)\in \R^{d}\times \R \\
            u(x,0)=f(x)\\
            \partial_t u(x,0)=0.
        \end{cases} 
    \end{split}
\end{equation}

For each fixed time $t>0$, by a classical result of Peral \cite{Peral80} and Miyachi \cite{Miyachi80}, in the range $2<p<\infty$ we have that 
\begin{equation}\label{localsmoothingforfixedt}
 \|u(\cdot, t)\|_{L^p(\R^d)}\lesssim_{t,p} \|f\|_{L_{s_p}^{p}(\R^d)}   
\end{equation}
where $s_p=\frac{d-1}{2}-\frac{d-1}{p}$.

Sogge conjectured \cite{Sogge91} that by taking $L^p$ norms in both space and local time, one has the following local smoothing estimate for the wave equation.

\begin{conj}[Local smoothing conjecture for the wave equation in $\R^{d+1}$\,\cite{Sogge91}]\label{conj: localsmoothingwaveeq}
    Let $u$ be a solution of the Cauchy problem in (\ref{cauchywaveeq}). Then for all $p\geq \frac{2d}{d-1}$ and $s_p=\frac{d-1}{2}-\frac{d-1}{p}$ we have 
    \begin{equation}\label{conjlocalsmoothingwave}
\|u\|_{L^p(\R^d\times[1,2])}\lesssim_{\varepsilon} \|f\|_{L^p_{s_p-1/p+\varepsilon}(\R^d)}.
    \end{equation}
    
\end{conj}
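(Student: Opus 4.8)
The statement in question is Sogge's long-standing local smoothing conjecture, so rather than a proof I will describe the standard line of attack and indicate where it currently stalls. The plan is to reduce \eqref{conjlocalsmoothingwave} to a single frequency scale and then run decoupling and square-function estimates for the light cone. First I would write $f=\sum_{j\ge 0}P_jf$ and recall that $u=\cos(t\sqrt{-\Delta})f=\tfrac12\bigl(e^{it\sqrt{-\Delta}}+e^{-it\sqrt{-\Delta}}\bigr)f$. The low-frequency piece $P_0f$ is harmless, and on $|\xi|\sim\lambda=2^j$ the weight $\langle\xi\rangle^{s_p-1/p+\varepsilon}$ is comparable to $\lambda^{s_p-1/p+\varepsilon}$, so it suffices to prove, for $u_\lambda$ the solution of \eqref{cauchywaveeq} with data $P_\lambda f$,
\begin{equation*}
\|u_\lambda\|_{L^p(\R^d\times[1,2])}\lesssim_\varepsilon \lambda^{s_p-1/p+\varepsilon}\,\|P_\lambda f\|_{L^p(\R^d)},\qquad s_p=\tfrac{d-1}{2}-\tfrac{d-1}{p}.
\end{equation*}
Writing $u_\lambda(x,t)=\int e^{2\pi i(x\cdot\xi+t|\xi|)}a(\xi)\widehat{P_\lambda f}(\xi)\,d\xi$ plus its conjugate half-wave piece, this is an $L^p$ bound for an operator whose output has space-time Fourier support in a $\lambda$-neighborhood of the light cone $\{\tau=\pm|\xi|\}$, i.e.\ the model cone-multiplier problem.

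Second, I would cut $[1,2]$ into $O(1)$ unit-length time intervals and rescale so that the relevant frequencies lie in a $\lambda^{-1/2}$-neighborhood of the truncated cone in $\R^{d+1}$. Partitioning that neighborhood into plates $\theta$ of angular width $\lambda^{-1/2}$ and applying Bourgain--Demeter $\ell^2$ decoupling for the cone gives $\|u_\lambda\|_{L^p(\R^{d+1})}\lesssim_\varepsilon\lambda^\varepsilon\bigl(\sum_\theta\|u_{\lambda,\theta}\|_{L^p(\R^{d+1})}^2\bigr)^{1/2}$; together with $L^2$-orthogonality and the Peral--Miyachi fixed-time bound \eqref{localsmoothingforfixedt} this already yields \eqref{conjlocalsmoothingwave} in the range $p\ge\frac{2(d+1)}{d-1}$. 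To reach the full conjectured range $p\ge\frac{2d}{d-1}$ one must instead dominate $\|u_\lambda\|_{L^p}$ by the $L^p$-norm of the square function $\bigl(\sum_\theta|u_{\lambda,\theta}|^2\bigr)^{1/2}$ up to a $\lambda^\varepsilon$ loss --- the Mockenhaupt--Seeger--Sogge square function estimate for the cone --- which in $d=2$ was established by Guth--Wang--Zhang through polynomial partitioning and a refined incidence analysis of tubes tangent to light cones, thereby settling the conjecture for $d=2$ (and, together with Theorem \ref{Thm:pinnednonemptyassumingLS}, underpinning Theorem \ref{Thm:pinnednonemptyin2dbasic}).

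The main obstacle is exactly this square-function step when $d\ge 3$: the sharp cone square function inequality is open, and the broad--narrow plus polynomial-partitioning arguments that succeed in the plane do not yet produce the optimal numerology in higher dimensions, so only the partial ranges recorded in \cite{GanWu} are unconditionally available. This is why Theorems \ref{Thm:pinnednonemptyassumingLS} and \ref{Thm:pinnednonemptytreeinanydimensionwithlocalsmoothing} are stated conditionally on \eqref{partiallocalsmoothing}, and why the unconditional Corollary \ref{Cor:pinnednonemptyinhigherd} and Theorem \ref{Thm:pinnednonemptytreeinhigherdim} fall short of the conjectural threshold $\frac{d+2}{2}-\frac{1}{2d}$ for $d\ge 3$. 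Any genuinely new input here --- a sharp cone square function bound in dimensions $d\ge 3$, or a direct incidence-geometric proof of \eqref{conjlocalsmoothingwave} at the endpoint $p=\frac{2d}{d-1}$ --- would feed straight through Theorem \ref{Thm:pinnednonemptyassumingLS} to sharpen every distance and tree threshold in this paper down to $\frac{d+2}{2}-\frac{1}{2d}$.
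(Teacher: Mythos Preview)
The statement you were asked to address is Conjecture~\ref{conj: localsmoothingwaveeq}, not a theorem, and the paper does not prove it: it is merely stated as Sogge's conjecture, with the $d=2$ case quoted from \cite{GWZ20} as Theorem~\ref{Thm: sharplocalsmoothingind2} and partial higher-dimensional results cited from \cite{GanWu}. So there is no ``paper's own proof'' to compare against.

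Given that, your response is appropriate and accurate. You correctly identify the statement as open for $d\ge 3$, give the standard frequency-localized reduction, note that Bourgain--Demeter decoupling handles the range $p\ge \frac{2(d+1)}{d-1}$, and pinpoint the sharp cone square-function estimate as the missing ingredient for the full range $p\ge\frac{2d}{d-1}$, settled only when $d=2$ by Guth--Wang--Zhang. Your remarks about how a resolution would feed through Theorem~\ref{Thm:pinnednonemptyassumingLS} to sharpen the thresholds in Corollary~\ref{Cor:pinnednonemptyinhigherd} and Theorem~\ref{Thm:pinnednonemptytreeinhigherdim} are also consistent with the paper's own commentary following those results. Nothing to correct here.
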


In other words, by also integrating locally in time, there is almost an extra $1/p$ smoothing in comparison to the fixed time estimate in (\ref{localsmoothingforfixedt}). In a groundbreaking result by Guth, Wang and Zhang \cite{GWZ20}, the local smoothing conjectured for the wave equation was confirmed for $d=2$. We state it below.

\begin{thm}\cite{GWZ20}\label{Thm: sharplocalsmoothingind2}
    Suppose that $u(x,t)$ is a solution of the wave equation in $\R^{2+1}$ with initial data $u(x,0)=f$ and $\partial_tu(x,0)=0$. Then for all $p\geq 4$ and $\varepsilon>0$,
    \begin{equation}
        \|u\|_{L^{p}(\R^2\times [1,2])}\lesssim_{\varepsilon} \|f\|_{L^p_{1/2-2/p+\varepsilon}(\R^2)}.
    \end{equation}
\end{thm}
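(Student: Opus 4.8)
The plan is to peel off the standard harmonic-analytic reductions and isolate the single new ingredient of \cite{GWZ20}, a sharp \emph{reverse square function} estimate for the light cone in $\R^3$; granting that estimate, the rest is classical.

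First I would reduce to a frequency-localized endpoint bound. Since $\cos(t\sqrt{-\Delta})=\tfrac12\bigl(e^{it\sqrt{-\Delta}}+e^{-it\sqrt{-\Delta}}\bigr)$, it suffices to treat the half-wave propagator $u=e^{it\sqrt{-\Delta}}f$; and by a Littlewood--Paley decomposition together with the square function argument for the wave operator due to Mockenhaupt, Seeger and Sogge, the problem reduces to proving, for $\hat f$ supported in an annulus $\{|\xi|\sim\lambda\}$,
$$\|u\|_{L^p(\R^2\times[1,2])}\lesssim_\varepsilon\lambda^{\frac12-\frac2p+\varepsilon}\|f\|_{L^p(\R^2)},$$
and then summing the dyadic pieces against the $\lambda^\varepsilon$ loss. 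Riesz--Thorin interpolation between $p=\infty$, where the fixed-time Peral--Miyachi bound (\ref{localsmoothingforfixedt}) already supplies the sharp factor $\lambda^{1/2}$, and the endpoint $p=4$, where the target power is merely $\lambda^\varepsilon$, collapses everything to
$$\|e^{it\sqrt{-\Delta}}f\|_{L^4(\R^2\times[1,2])}\lesssim_\varepsilon\lambda^\varepsilon\|f\|_{L^4(\R^2)},\qquad\supp\hat f\subset\{|\xi|\sim\lambda\}.$$

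Next I would reduce this to a cone square function estimate. Partition the annulus into $\lambda^{-1/2}$-angular sectors $\theta$, so $u=\sum_\theta u_\theta$, where the space--time Fourier transform of each $u_\theta$ is supported in a $1\times\lambda^{1/2}\times\lambda$ plank tangent to the light cone $\{\tau=|\xi|\}$. The core estimate is
$$\|u\|_{L^4(\R^2\times[1,2])}\lesssim_\varepsilon\lambda^\varepsilon\Bigl\|\bigl(\textstyle\sum_\theta|u_\theta|^2\bigr)^{1/2}\Bigr\|_{L^4(\R^2\times[1,2])}.$$
For the parabola in $\R^2$ the analogous $L^4$ inequality is classical (C\'ordoba), because there the relevant sum-sets of caps have bounded overlap; for the cone in $\R^3$ they overlap essentially as badly as the dimensions permit, which is exactly the obstruction that \cite{GWZ20} overcomes. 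Granting it, one recovers the displayed $p=4$ bound by the standard companion argument — the local constancy of each $u_\theta$ on a dual plank together with the $\ell^2$-orthogonality $\sum_\theta\|u_\theta(\cdot,t)\|_{L^2}^2=\|f\|_{L^2}^2$ coming from the sectors being disjoint — which I would treat as bookkeeping.

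Finally, and this is where essentially all of the difficulty lies, I would establish the reverse square function estimate following \cite{GWZ20}, by an induction on the scale $\lambda$. The scheme is: (i) a broad--narrow dichotomy at each space--time point — either a few angularly separated sectors dominate the sum (the broad case), or all significant sectors lie within $\lambda^{-1/2+\delta}$ of a single direction (the narrow case); (ii) in the narrow case, parabolic rescaling of the cone converts the estimate into the same estimate at scale $\lambda^{1-2\delta}$, closing the induction; (iii) in the broad case, where the contributing planks point in transverse directions, run a polynomial partitioning argument — slice space--time by the zero set of a polynomial of controlled degree, estimate the cellular contribution again by induction on scales, and handle the wall contribution using the transversality together with the polynomial Wolff axioms for tubes tangent to the cone (Katz--Rogers, Zahl) and $\ell^2$-decoupling for the cone applied at the intermediate scale $\lambda^{1/2}$. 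The main obstacle, and the genuine content of \cite{GWZ20}, is to arrange the induction so that each stage costs only $\lambda^\varepsilon$: one must simultaneously propagate the $L^4$ norm, the square function, and a weight encoding local constancy at scale $\lambda^{1/2}$, and balance the cellular term against the wall term so that neither the polynomial degree nor the number of scales inflates the constant beyond $\lambda^\varepsilon$. The remaining steps — the Littlewood--Paley summation, the half-wave and interpolation reductions, the wave-packet decomposition inside each cell, and the rescaling normalizations — I would only indicate, since they are routine.
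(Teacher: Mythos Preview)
The paper does not prove this theorem; it is stated without proof as a black-box input, cited directly from \cite{GWZ20}. So there is no ``paper's own proof'' to compare against, and you have effectively written a sketch of the Guth--Wang--Zhang argument itself.

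As such a sketch, your reductions are correct: passing to the half-wave propagator, Littlewood--Paley localizing to $|\xi|\sim\lambda$, interpolating down to the endpoint $p=4$, and identifying the sharp $L^4$ reverse square function estimate for the cone as the heart of the matter are all accurate. However, your description of how \cite{GWZ20} proves that square function estimate is off in one substantive respect. The broad case is \emph{not} handled by polynomial partitioning, and neither the polynomial Wolff axioms nor the Katz--Rogers/Zahl machinery is invoked. The new ingredient in \cite{GWZ20} is instead a Kakeya-type \emph{wave envelope estimate}: a multiscale $L^2$ incidence bound for planks tangent to the cone, organized into ``wave envelopes'' at every dyadic scale between $1$ and $\lambda^{1/2}$, and proved by a high/low frequency decomposition combined with induction on scales. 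The broad--narrow dichotomy and Lorentz rescaling in the narrow case are as you say, but the broad case is closed by this wave envelope estimate together with pigeonholing, not by slicing space--time with an algebraic variety. If you are going to outline the proof rather than cite it, that distinction is the main content and should be stated correctly.
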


Recall that a solution of the Cauchy problem in (\ref{cauchywaveeq}) can be written in terms of the half-wave propagator as 
$$u(x,t)=e^{it\sqrt{-\Delta}}f(x):=\int_{\R^d} e^{2\pi i(x\cdot\xi+t|\xi|)}\hat{f}(\xi)\,d\xi. $$

For every $t>0$ define the spherical average of $f$ at $x$ of radius $t>0$ as 
\begin{equation}\label{def: averageAt}
    A_t(f)(x)=\int_{S^{d-1}} f(x-ty)\,d\sigma(y)=f*\sigma_t(x),
\end{equation}
where $\sigma$ is the normalized canonical measure in the unit sphere $S^{d-1}=\{x\in \R^d\colon |x|=1\}$. When written as a Fourier multiplier we have 
$$A_tf(x)=\int_{\R^d} \hat{f}(\xi) \hat{\sigma}(t\xi)e^{2\pi i x\cdot \xi}d\xi.$$

It is possible to transfer local smoothing estimates for the wave equation to local smoothing estimates for spherical averages. For example, that is shown in \cite{BORSS22} (See Section 2.3 and Proposition 4.4). In what follows, for the reader's convenience, we'll explain in more detail how that can be done in our context. 

One can write
$$\hat{\sigma}(\xi)=b_0(|\xi|)+\sum_{\pm} b_{\pm}(|\xi|)e^{\pm 2\pi i |\xi|}$$
where $b_0\in C^{\infty}_{c}(\R)$ is supported in $[-1,1]$ and each $b_{\pm}$ is supported in $|\xi|\geq 1/2$ with $b_{\pm}\in S^{-(d-1)/2}(\R)$, i.e., $|\frac{d^k (b_{\pm})}{ds^k}(s)|\lesssim_k (1+|s|)^{-\frac{d-1}{2}-k}$. Therefore, for $j\geq 1$, and $t\in [1,2]$

\begin{equation}\label{representationofAinwavepieces}
    A_t(f^j)(x)=\sum_{\pm} \int \beta_j(|\xi|)b_{\pm}(t|\xi|)\hat{f}(\xi)e^{2\pi i(x\cdot \xi\pm it|\xi|)}d\xi.
\end{equation}

Define for $j\geq 0$, $$u^j_{\pm}(x,t):=\int \beta_j(|\xi|)\hat{f}(\xi)e^{2\pi i (x\cdot \xi\pm it |\xi|)}d\xi.$$
Assume the local smoothing conjecture is true for the wave equation in $\R^{d+1}$ as in (\ref{conjlocalsmoothingwave}). Then for $p\geq \frac{2d}{d-1}$,
\begin{equation}\label{ineq: waveLSdecay}
    \|u^{j}_{\pm}\|_{L^{p}(\R^d\times [1,2])}\lesssim_{\varepsilon} 2^{j(\frac{d-1}{2}-\frac{d}{p}+\varepsilon)}\|f^j\|_{L ^p(\R^d)}.
\end{equation}
We claim that the inequality above implies
\begin{equation}\label{ineq: sphericalaveragedecay}
    \|A_t(f^j)(x)\|_{L^p(\R^d\times [1,2])}\lesssim_{\varepsilon} 2^{j(-d/p+\varepsilon)}\|f^j\|_{L^p(\R^d)}.
\end{equation}

Indeed, note that 
\begin{equation}
\begin{split}
    T_{b_{\pm}}(f^j)(x,t):=&\int \beta_j(|\xi|)b_{\pm}(t|\xi|)\hat{f}(\xi)e^{2\pi i(x\cdot \xi\pm it|\xi|)}d\xi\\
    =&\int \left(\tilde{\beta}_j(|\xi|)b_{\pm}(t|\xi|)\right)\cdot \left(\beta_j(|\xi|)\hat{f}(\xi)e^{\pm 2\pi it |\xi|}\right)e^{2\pi ix\cdot \xi}d\xi\\
    =& \mathcal{F}^{-1}[\tilde{\beta}_j(|\cdot|)\ b_{\pm}(t|\cdot|)]*u^{j}_{\pm}(\cdot, t)(x)
\end{split}
\end{equation}
where $\tilde{\beta_j}:=\beta_{j-1}+\beta_j+\beta_{j+1}\equiv 1$ in the support of $\beta_j$.
From Young's inequality 
$$\|T_{b_{\pm}}f^j(x,t)\|_{L^p_x}\lesssim \|u^j_{\pm}(x,t)\|_{L^{p}_x}\|\mathcal{F}^{-1}[\tilde{\beta}_j(|\xi|)\ b_{\pm}(t|\xi|)](x)\|_{L^{1}_x}.$$
By a standard integration by parts argument and since $b_{\pm}\in S^{-\frac{d-1}{2}}(\R)$ we have $$\|\mathcal{F}^{-1}[\tilde{\beta}_j(|\xi|)\ b_{\pm}(t|\xi|)](x)\|_{L^{1}_x}\leq C2^{-j\frac{d-1}{2}},$$ uniformly for $t\in [1,2]$. By further integrating the inequality above in $t\in [1,2]$ and using the local smoothing in (\ref{ineq: waveLSdecay}) we get
$$\sum_{\pm}\|T_{b_{\pm}}f^j(x,t)\|_{L^p(\R^d\times [1,2])}\lesssim 2^{-j\frac{d-1}{2}}\|f^j\|_{L^p_{s_p-1/p+\varepsilon}(\R^d)}\lesssim 2^{j(-d/p+\varepsilon)}\|f^j\|_p,$$
which leads to the desired inequality (\ref{ineq: sphericalaveragedecay}). More generally, for all $k\in \Z_{+}$, one would have,
$$\left\|\frac{d^k}{dt^k}A_t(f^j)(x)\right\|_{L^p(\R^d\times [1,2])}\lesssim_{\varepsilon,k} 2^{j(k-d/p+\varepsilon)}\|f^j\|_{L^p}$$ 
which, combined with interpolation, would lead to the estimates in the conjecture below.

\begin{conj}[$L^p\rightarrow L^p$ local smoothing conjecture for spherical averages]\label{conj: sphericalcasedimd}
    Let $d\geq 2$ and $\gamma\geq 0$. For all $p\geq \frac{2d}{d-1}$ one has 
    \begin{equation}
\left(\int_{1}^{2}\int_{\R^d}|\partial^{\gamma}_tA_t(f)(x)|^pdxdt\right)^{1/p}\lesssim_{\varepsilon} \|f\|_{L^{p}_{\gamma-\frac{d}{p}+\varepsilon}(\R^d)}.
    \end{equation}
\end{conj}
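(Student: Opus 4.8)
The plan is to deduce the estimate directly from the local smoothing conjecture for the wave equation (Conjecture~\ref{conj: localsmoothingwaveeq}) by carrying out in full the reduction already sketched above. First I would decompose $f=\sum_{j\ge 0}f^j$ into Littlewood--Paley pieces and reduce to the single-frequency bound
$$
\big\|\partial_t^{\gamma}A_t(f^j)\big\|_{L^p(\R^d\times[1,2])}\lesssim_{\varepsilon}2^{j(\gamma-\frac dp+\varepsilon)}\|f^j\|_{L^p(\R^d)}\qquad(j\ge 0),
$$
which suffices because the geometric gain $2^{-j\varepsilon}$ lets one sum in $j$ by the triangle inequality together with the uniform (in $j$) Littlewood--Paley estimate $\|f^j\|_{L^p(\R^d)}\lesssim 2^{-js}\|f\|_{L^p_s(\R^d)}$, at the cost of relabeling $\varepsilon$.

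For $\gamma=0$ the argument is exactly the one displayed above: write $\hat\sigma(\xi)=b_0(|\xi|)+\sum_{\pm}b_{\pm}(|\xi|)e^{\pm 2\pi i|\xi|}$ with $b_0$ compactly supported (so the $b_0$ term only contributes for $j=O(1)$ and is harmless) and $b_{\pm}\in S^{-(d-1)/2}(\R)$; then $A_t(f^j)=\sum_{\pm}T_{b_\pm}(f^j)$ with $T_{b_\pm}(f^j)(\cdot,t)=\mathcal F^{-1}[\tilde\beta_j(|\cdot|)\,b_\pm(t|\cdot|)]*u^j_\pm(\cdot,t)$, and Young's inequality in $x$, the integration-by-parts bound $\|\mathcal F^{-1}[\tilde\beta_j(|\cdot|)\,b_\pm(t|\cdot|)]\|_{L^1_x}\lesssim 2^{-j(d-1)/2}$ uniform in $t\in[1,2]$, the frequency-localized form \eqref{ineq: waveLSdecay} of Conjecture~\ref{conj: localsmoothingwaveeq}, and integration in $t$ combine to give \eqref{ineq: sphericalaveragedecay}. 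For an integer $\gamma=k$, differentiating under the integral produces $k$ extra symbols of order $\le 1$ from $e^{\pm 2\pi it|\xi|}$ and $b_\pm(t|\xi|)$, all supported where $|\xi|\sim 2^j$, so the same estimate yields the extra factor $2^{jk}$; equivalently, after fixing $\chi\in C_c^\infty(\R)$ with $\chi\equiv1$ on $[1,2]$, the function $t\mapsto \chi(t)A_t(f^j)(x)$ has $t$-frequency essentially localized to $|\tau|\lesssim 2^{j}$, so $\|\partial_t^{\gamma}(\chi A_tf^j)\|_{L^p_t}\lesssim 2^{j\gamma}\|\chi A_tf^j\|_{L^p_t}$ uniformly in $x$ by a Mikhlin-type bound after localizing the multiplier $\langle\tau\rangle^{\gamma}$ to $|\tau|\sim 2^j$. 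This last observation also covers non-integer $\gamma\ge 0$ (alternatively, complex interpolation of the analytic family $\partial_t^z$ between integer values), so integrating in $x$, summing over $\pm$, and then over $j$ finishes the argument.

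The only genuine obstacle is that everything is conditional on Conjecture~\ref{conj: localsmoothingwaveeq}, which is open for $d\ge3$; when $d=2$ one substitutes the Guth--Wang--Zhang theorem (Theorem~\ref{Thm: sharplocalsmoothingind2}) and the argument becomes unconditional. The remaining points are routine but must be handled with care: one should fix the cutoff $\chi$ once and check that multiplication by $\chi$ costs only $O(1)$ in every $L^p_\gamma$-in-$t$ norm that appears (immediate from $\chi\in\mathcal S(\R)$ and almost-orthogonality of dyadic $t$-frequency blocks); one should note that $b_\pm(t|\xi|)$ is genuinely a symbol in $t$, legitimizing integration by parts in $t$; and the passage from integer to fractional $\gamma$ must be arranged so that all implicit constants remain uniform in $j$, which is precisely why one localizes $\langle\tau\rangle^{\gamma}$ to $|\tau|\sim 2^j$ before estimating.
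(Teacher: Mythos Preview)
Your proposal is correct and follows essentially the same route as the paper: Littlewood--Paley decomposition, the half-wave representation \eqref{representationofAinwavepieces} together with Young's inequality and the $L^1$ bound $\|\mathcal F^{-1}[\tilde\beta_j(|\cdot|)b_\pm(t|\cdot|)]\|_{L^1_x}\lesssim 2^{-j(d-1)/2}$ to obtain \eqref{ineq: sphericalaveragedecay}, then $t$-differentiation to reach integer orders $\gamma=k$, and finally interpolation for general $\gamma\ge 0$. Your additional remark that $\chi(t)A_t(f^j)$ has $t$-frequency essentially localized at scale $2^j$ is a clean alternative to the interpolation step, but the overall argument is the one the paper sketches just before stating the conjecture.
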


From Theorem \ref{Thm: sharplocalsmoothingind2} one gets the conjectured local smoothing for spherical averages for $d=2$.

\begin{cor}[$L^{p}\rightarrow L^{p}$ local smoothing for spherical averages in $\R^{2+1}$]\label{cor: sphereicallocalsmoothingd2}
    Let $p\geq 4$ and $\gamma\geq 0$. Then 
    \begin{equation}
\left(\int_{1}^{2}\int_{\R^2}|\partial^{\gamma}_tA_t(f)(x)|^pdxdt\right)^{1/p}\lesssim_{\varepsilon} \|f\|_{L^{p}_{\gamma-\frac{2}{p}+\varepsilon}(\R^2)}.
    \end{equation}
    In particular, for all $\varepsilon>0$ one has 
     \begin{equation}
\left(\int_{1}^{2}\int_{\R^2}|\partial^{1/4+\varepsilon}_tA_t(f)(x)|^4dxdt\right)^{1/4}\lesssim_{\varepsilon} \|f\|_{L^{p}_{-\frac{1}{4}+2\varepsilon}(\R^2)}.
    \end{equation}
\end{cor}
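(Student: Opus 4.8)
The plan is to run the transference argument already sketched in this section — the chain $(\ref{conjlocalsmoothingwave}) \Rightarrow (\ref{ineq: waveLSdecay}) \Rightarrow (\ref{ineq: sphericalaveragedecay})$ — but feeding in Theorem \ref{Thm: sharplocalsmoothingind2} as the unconditional substitute for the local smoothing conjecture. This is consistent because in dimension $d=2$ the relevant exponents coincide: the loss $s_p - 1/p$ appearing in $(\ref{conjlocalsmoothingwave})$ equals $\frac12 - \frac2p$, which is exactly the Sobolev exponent in Theorem \ref{Thm: sharplocalsmoothingind2} (and $\frac{d-1}{2}-\frac{d}{p}=\frac12-\frac2p$ as well). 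So every inequality in that chain holds unconditionally for $d=2$ and $p\ge 4$.

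Concretely, I would first treat the case $\gamma = 0$. Decompose $f = \sum_{j\ge 0} f^j$; the contribution of $j=0$ (together with the finitely many $j$ for which $t|\xi|$ can meet the support of $b_0$ for $t\in[1,2]$) is harmless, since there $A_t$ is convolution with a fixed Schwartz function uniformly in $t\in[1,2]$. For the remaining $j$, use $(\ref{representationofAinwavepieces})$ to write $A_t(f^j) = \sum_{\pm} T_{b_\pm}(f^j)$; applying Theorem \ref{Thm: sharplocalsmoothingind2} to the single Littlewood--Paley piece $f^j$ gives $\|u^j_\pm\|_{L^p(\R^2\times[1,2])} \lesssim_\varepsilon 2^{j(1/2 - 2/p + \varepsilon)}\|f^j\|_{L^p}$, and then Young's inequality together with $\|\mathcal{F}^{-1}[\tilde\beta_j(|\cdot|)\, b_\pm(t|\cdot|)]\|_{L^1}\lesssim 2^{-j/2}$ (valid since $b_\pm \in S^{-1/2}(\R)$ when $d=2$), exactly as in the derivation of $(\ref{ineq: sphericalaveragedecay})$, yields $\|A_t(f^j)\|_{L^p(\R^2\times[1,2])}\lesssim_\varepsilon 2^{j(-2/p+\varepsilon)}\|f^j\|_{L^p}$.

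To insert the $t$-derivative, note that by $(\ref{representationofAinwavepieces})$ the map $t\mapsto A_t(f^j)(x)$ depends on $t$ only through $b_\pm(t|\xi|)$ and $e^{\pm 2\pi i t|\xi|}$ with $|\xi|\sim 2^j$, so it is frequency-localized in $t$ to $\{|\tau|\lesssim 2^j\}$. Interpreting $\partial_t^\gamma$ via a fixed cutoff $\chi\in C_c^\infty(\R)$ with $\chi\equiv 1$ on $[1,2]$ (the usual convention for local-in-time statements), it follows that $\partial_t^\gamma$ acts on this piece as convolution in $t$ with a kernel of $L^1(dt)$-norm $\lesssim 2^{j\gamma}$, hence $\|\partial_t^\gamma A_t(f^j)\|_{L^p(\R^2\times[1,2])}\lesssim_\varepsilon 2^{j(\gamma - 2/p + \varepsilon)}\|f^j\|_{L^p}$. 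Summing in $j$ by standard Littlewood--Paley theory, and trading the $2^{j\varepsilon}$ factor for an $\varepsilon$-loss in the Sobolev exponent rather than applying the $p$-triangle inequality naively, gives $\|\partial_t^\gamma A_t f\|_{L^p(\R^2\times[1,2])}\lesssim_\varepsilon \|f\|_{L^p_{\gamma - 2/p + \varepsilon}}$, which after relabeling $\varepsilon$ is the first displayed inequality; the second is the special case $p=4$, $\gamma = \frac14+\varepsilon$.

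I do not anticipate any essential obstacle: this is a transference argument whose model computation is already in the excerpt. The two points needing genuine care are giving $\partial_t^\gamma A_t f$ a clean meaning through the time cutoff and verifying the $t$-frequency localization of the pieces, so that $\partial_t^\gamma$ costs only the factor $2^{j\gamma}$ per Littlewood--Paley block, and the final summation in $j$, where the $2^{j\varepsilon}$ loss must be absorbed into the Sobolev weight.
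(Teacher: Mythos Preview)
Your proposal is correct and follows essentially the same transference argument the paper sketches just before the corollary: feed Theorem \ref{Thm: sharplocalsmoothingind2} into $(\ref{ineq: waveLSdecay})$ and then into $(\ref{ineq: sphericalaveragedecay})$ via the Young's inequality step with $b_\pm\in S^{-1/2}$. The only minor difference is that the paper reaches general $\gamma\ge 0$ by first establishing the estimate for integer $k$ (differentiating $(\ref{representationofAinwavepieces})$ in $t$ picks up at most $2^{jk}$) and then interpolating, whereas you argue directly via $t$-frequency localization of each Littlewood--Paley piece; both routes are standard and equivalent here.
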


The local smoothing result above implies that when $d=2$, the hypotheses of Theorem \ref{Thm:pinnednonemptyassumingLS} are satisfied with $p=4$ and $\eta=1/4$ leading to the unconditional Corollary \ref{Cor:pinnednonemptyinhigherd}.

For $d\geq 3$, we make use of a recent improvement on local smoothing estimates for the wave equation in \cite[Theorem 1.3]{GanWu} to get the following corollary.

\begin{cor}[$L^{p}\rightarrow L^{p}$ local smoothing for spherical averages in $\R^{d+1}$]\label{cor: sphereicallocalsmoothinghigherd}
    Let $d\geq 3$ and $p\geq p_d$ where $p_d=\frac{10}{3}$ in $d=3$ and $p_d=2+\frac{8}{3d-4}$ for $d\geq 4$.  
    Then for any $\gamma\geq 0$, 
    \begin{equation}
\left(\int_{1}^{2}\int_{\R^d}|\partial^{\gamma}_tA_t(f)(x)|^pdxdt\right)^{1/p}\lesssim_{\varepsilon} \|f\|_{L^{p}_{\gamma-\frac{d}{p}+\varepsilon}(\R^d}),
    \end{equation}
In particular, for $p=p_d$ and $\gamma=1/p_d+\varepsilon$,
\begin{equation}
\left(\int_{1}^{2}\int_{\R^d}|\partial^{\frac{1}{p_d}+\varepsilon}_tA_t(f)(x)|^{p_d}dxdt\right)^{1/p_d}\lesssim_{\varepsilon} \|f\|_{L^{p_d}_{-\frac{d-1}{p_d}+2\varepsilon}(\R^d)}.
    \end{equation}
   
\end{cor}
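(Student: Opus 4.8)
The plan is to repeat, with $d$ left general, the transference argument from wave local smoothing to local smoothing for spherical averages that is carried out in the discussion preceding Corollary~\ref{cor: sphereicallocalsmoothingd2}, feeding in this time the higher-dimensional wave estimate of \cite[Theorem 1.3]{GanWu} in place of \cite{GWZ20}. The single input we need is that in dimension $d\geq 3$ the sharp wave local smoothing estimate \eqref{conjlocalsmoothingwave} holds for all $p\geq p_d$, equivalently that its Littlewood--Paley form \eqref{ineq: waveLSdecay}, namely $\|u^j_\pm\|_{L^p(\R^d\times[1,2])}\lesssim_\varepsilon 2^{j(\frac{d-1}{2}-\frac{d}{p}+\varepsilon)}\|f^j\|_{L^p(\R^d)}$ for each $j\geq 0$, is valid; this is precisely what \cite[Theorem 1.3]{GanWu} supplies at the threshold exponents $p_d=\tfrac{10}{3}$ ($d=3$) and $p_d=2+\tfrac{8}{3d-4}$ ($d\geq 4$), and hence, by interpolation with the elementary fixed-time bound \eqref{localsmoothingforfixedt}, for all $p\geq p_d$. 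Granting this, everything below is dimension-independent and mirrors the $d=2$ argument.

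First I would decompose $A_tf=\sum_{j\geq 0}A_t(f^j)$ into Littlewood--Paley pieces. The $j=0$ piece is harmless, since the contribution of the compactly supported smooth factor $b_0(|\xi|)$ of $\hat\sigma$ is a fixed Schwartz multiplier, giving $\|\partial_t^\gamma A_t(f^0)\|_{L^p(\R^d\times[1,2])}\lesssim\|f^0\|_{L^p(\R^d)}$. For $j\geq 1$ I would use the representation \eqref{representationofAinwavepieces}, writing $A_t(f^j)=\sum_{\pm}T_{b_\pm}(f^j)$ with $T_{b_\pm}(f^j)(\cdot,t)=\mathcal{F}^{-1}\big[\tilde{\beta}_j(|\cdot|)\,b_\pm(t|\cdot|)\big]\ast u^j_\pm(\cdot,t)$; since $b_\pm\in S^{-(d-1)/2}(\R)$, a standard integration by parts gives $\big\|\mathcal{F}^{-1}[\tilde{\beta}_j(|\cdot|)\,b_\pm(t|\cdot|)]\big\|_{L^1(\R^d)}\lesssim 2^{-j(d-1)/2}$ uniformly in $t\in[1,2]$. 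Young's inequality in $x$, then integration in $t$ and \eqref{ineq: waveLSdecay}, yield $\|A_t(f^j)\|_{L^p(\R^d\times[1,2])}\lesssim_\varepsilon 2^{j(-d/p+\varepsilon)}\|f^j\|_{L^p(\R^d)}$, which is \eqref{ineq: sphericalaveragedecay}, because $\frac{d-1}{2}-\frac{d}{p}-\frac{d-1}{2}=-\frac{d}{p}$. Differentiating \eqref{representationofAinwavepieces} in $t$ costs a factor $2^j$ per derivative (the $t$-derivatives falling on the symbol $b_\pm(t|\xi|)$ only help), so $\|\partial_t^k A_t(f^j)\|_{L^p(\R^d\times[1,2])}\lesssim_{\varepsilon,k}2^{j(k-d/p+\varepsilon)}\|f^j\|_{L^p(\R^d)}$ for every integer $k\geq 0$; complex interpolation in the order of differentiation (after first multiplying $t\mapsto A_tf$ by a smooth cutoff supported near $[1,2]$, so $\partial_t^\gamma$ is well defined) upgrades this to
\[
\big\|\partial_t^\gamma A_t(f^j)\big\|_{L^p(\R^d\times[1,2])}\lesssim_{\varepsilon}2^{j(\gamma-d/p+\varepsilon)}\|f^j\|_{L^p(\R^d)}\qquad\text{for all }\gamma\geq 0.
\]

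Finally I would sum in $j$. By Minkowski's inequality and the previous display,
\[
\Big\|\partial_t^\gamma A_t f\Big\|_{L^p(\R^d\times[1,2])}\leq\sum_{j\geq 0}\big\|\partial_t^\gamma A_t(f^j)\big\|_{L^p(\R^d\times[1,2])}\lesssim_{\varepsilon}\sum_{j\geq 0}2^{j(\gamma-d/p+\varepsilon/2)}\|f^j\|_{L^p(\R^d)},
\]
and since $\beta_j(|\xi|)\langle\xi\rangle^{-(\gamma-d/p+\varepsilon)}$ is a Mikhlin multiplier of norm $\lesssim 2^{-j(\gamma-d/p+\varepsilon)}$, one has $\|f^j\|_{L^p(\R^d)}=\|P_jf\|_{L^p(\R^d)}\lesssim 2^{-j(\gamma-d/p+\varepsilon)}\|f\|_{L^p_{\gamma-d/p+\varepsilon}(\R^d)}$; the remaining sum $\sum_{j}2^{-j\varepsilon/2}$ is $\lesssim_\varepsilon 1$, giving the first claimed estimate. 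Specializing to $p=p_d$ and $\gamma=\tfrac{1}{p_d}+\varepsilon$, for which $\gamma-\tfrac{d}{p_d}=-\tfrac{d-1}{p_d}+\varepsilon$, and relabeling $\varepsilon$ produces the second displayed inequality (with the Sobolev space taken over $\R^d$).

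The only step that is not bookkeeping is confirming that \cite[Theorem 1.3]{GanWu} indeed delivers \eqref{conjlocalsmoothingwave}, equivalently \eqref{ineq: waveLSdecay}, at the exponents $p_d$ in dimension $d\geq 3$; once this is granted, the argument above is literally the $d=2$ computation carried out in the text with $d$ kept general, so Corollary~\ref{cor: sphereicallocalsmoothinghigherd} follows from \cite{GanWu} exactly as Corollary~\ref{cor: sphereicallocalsmoothingd2} followed from Theorem~\ref{Thm: sharplocalsmoothingind2}.
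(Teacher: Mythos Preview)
Your proposal is correct and takes essentially the same approach as the paper. The paper does not give a separate proof of this corollary; it simply states that the result follows from \cite[Theorem~1.3]{GanWu} via the same transference computation (representation \eqref{representationofAinwavepieces}, the $L^1$ kernel bound from $b_\pm\in S^{-(d-1)/2}$, Young's inequality, then $t$-differentiation and interpolation) that was written out in detail just before Conjecture~\ref{conj: sphericalcasedimd} and Corollary~\ref{cor: sphereicallocalsmoothingd2}. You have reproduced exactly that argument with $d$ left general and \cite{GanWu} substituted for \cite{GWZ20}, and your added bookkeeping for the sum over $j$ (Minkowski plus the Mikhlin bound $\|f^j\|_{L^p}\lesssim 2^{-j(\gamma-d/p+\varepsilon)}\|f\|_{L^p_{\gamma-d/p+\varepsilon}}$) is correct.
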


Observe that for $p_d$ as in the corollary above one has $d-\frac{d-1}{p_d}=\frac{12}{5}$ for $d=3$ and $d-\frac{d-1}{p_d}=\frac{d}{2}+\frac{7d-4}{6d}$ when $d\geq 4$, the thresholds showing up in Theorem \ref{Thm:pinnednonemptytreeinhigherdim}.

\section{Proof of Theorem \ref{Thm:pinnednonemptyassumingLS}}\label{sec:maindisttheorem}

Take $E,F$ to be compact sets in $\R^d$ satisfying 
$$\frac{1}{p'}\Hdim(E)+\frac{1}{p}\Hdim(F)>d-\eta.$$

Denoting $s_{E}=\Hdim(E)$, there exists $\mu_E$, a Frostman probability measure in $E$, such that 
\begin{equation}\label{Frostman}
    \mu_{E}(B(x,r))\lesssim_{\varepsilon} r^{s_E-\varepsilon}\text{ for all }r>0 \text{ and }x\in \R^d,
\end{equation}
where $\varepsilon>0$ will be chosen sufficiently small. Similarly, let $s_F=\dim_{\mathcal{H}}(F)$ and take $\mu_F$ to be a Frostman probability measure in $F$ satisfying (\ref{Frostman}) with $E$ replaced by $F$.

A standard argument along the lines of \cite[Lemma 3]{BIO23} allows us to assume that all distances from points in $E$ to points in $F$ live in some compact interval $[a,b]$ with $a>0$. Really, what we are doing is finding positive measure subsets of these sets that are separated and working on these subsets instead. The interval $[a,b]$ can be decomposed into finitely many intervals at different dyadic scales, and we can apply the argument in each interval separately. So we can assume without loss of generality that $[a,b]=[1,2]$ and $dt$ is Lebesgue measure in $[1,2]$.

 If $\mu_E$ is a Frostman measure in $E$, one can define a measure $\tilde{\nu}_{E}^y$ in $\R$ as
$$\int f(t)d\tilde{\nu}^y_E(t)=\int f(|x-y|)d\mu_E(x).$$

In other words, $\tilde{\nu}_{E}^{y}=(\Phi^y)_{*}(\mu_E)$, is the pushforward of the measure $\mu_E$ through the map $\Phi^{y}:E\rightarrow \R$, mapping $x\mapsto |x-y|$.
Note that $\tilde{\nu}^y_E$ is supported in $\Delta^y(E)$, which is contained in $[1,2]$ by assumption. We want to show that for $\mu_F$-almost every $y$, $\tilde{\nu}^y_{E}$ is absolutely continuous with respect to the Lebesgue measure in $\R$, and it admits a continuous density. In that case, since $\tilde{\nu}_{E}^y$ is a probability measure with support in $[1,2]$ and continuous density, it follows that its support contains an interval, and so does $\Delta^{y}(E)$.

 By the Sobolev embedding in $\R$ (Theorem \ref{tool: embeddingthm}), if $\gamma>1/p$ then every element of $L_{\gamma}^p(\R)$ can be modified in a set of measure zero so that the resulting function is bounded and uniformly continuous. Hence, in order to prove Theorem \ref{Thm:pinnednonemptyassumingLS} it is enough to show that if $E,\,F$ are compact sets in $\R^d$ such that $\frac{1}{p'}\Hdim(E)+\frac{1}{p}\Hdim(F)>d-\eta,$
where $\eta$ is the assumed local smoothing as in (\ref{partiallocalsmoothing}) then there exist Frostman probability measures $\mu_E,\,\mu_F$ in $E,\,F$ respectively and $\gamma>1/p$ such that
\begin{equation}\label{reduction1}
  \left(\int \|\tilde{\nu}^y_E(t)\|_{L^{p}_{\gamma}(dt)}^pd\mu_F(y)\right)^{1/p}<\infty.
\end{equation}
Such an estimate implies that for $\mu_F$ a.e. $y$ one has that $\|\tilde{\nu}_{E}^{y}\|_{L_{\gamma}^{p}([1,2])}<\infty$, and consequently $\Delta^y(E)$ has nonempty interior.

We can rewrite (\ref{reduction1}) as a mixed norm estimate of the form
\begin{equation}\label{reduction2}
\|\partial_{t}^{1/p+\varepsilon}\tilde{\nu}_{E}^y(t)\|_{L^{p}(d\mu_F(y) dt )}<\infty,
\end{equation}
for some $\varepsilon>0$ (i.e. by taking $\gamma=1/p+\varepsilon$).

Let $\mu_E^{\delta}:=\mu_E*\rho^{\delta}$ where $\{\rho_{\delta}\colon \delta>0\}$ is an approximate identity. Namely, choose a nonnegative $C^{\infty}$ function $\rho$ with $\text{supp}(\rho)\subset B(0,1),\,\int \rho(t)dt=1$ and let $\rho_{\delta}(x)=\delta^{-d}\rho(x/\delta)$. Note that for each $\delta>0$ one has $\mu_{E}^{\delta}\in C^{\infty}_c$ and $\|\mu_{E}^{\delta}\|_{L^1}\lesssim 1$.

Let $(\tilde{\nu}_{E}^{y})^{\delta}$ be the pushforward of $\mu_E^{\delta}$ through the map $\Phi^y$. One can check that for each $\delta>0$ $$(\tilde{\nu}_{E}^y)^{\delta}(t)=t^{d-1}A_t(\mu_E^{\delta})(y)$$
where $A_t$ denotes the spherical averaging operator in (\ref{def: averageAt}). Note that by Remark \ref{rem drop smooth factor}, we will be able to drop the $t^{d-1}$ factor in the sequel.

For each $y$, one has $(\tilde{\nu}_{E}^y)^{\delta}\rightharpoonup \tilde{\nu}_{E}^y$ in $\mathcal{S}'(\R_{+})$.
Moreover, if we recall that $\partial^{\gamma}_t\tilde{\nu}_{E}^y=(\langle\tau\rangle^{\gamma}(\tilde{\nu}_{E}^{y}\hat{)}(\tau))^{\vee} $ understood in a distributional sense, then for all $\varphi\in \mathcal{S}(\R^d\times \R_{+})$
\begin{equation}
    \begin{split}
    \lim_{\delta\rightarrow 0}\langle\partial^{\gamma}_t(\tilde{\nu}_{E}^y)^{\delta},\varphi\rangle=&\lim_{\delta\rightarrow 0}\int \langle\tau\rangle^{\gamma}\mathcal{F}(\tilde{\nu}_{E}^y)^{\delta}(\tau)\mathcal{F}^{-1}({\varphi(y,\cdot)})(\tau)d\tau d\mu_F(y)\\
    =&\lim_{\delta\rightarrow 0}\int \langle\tau\rangle^{\gamma}\mathcal{F}(\tilde{\nu}_{E}^y)(\tau)\hat{\rho}(\delta \tau)\mathcal{F}^{-1}(\varphi(y,\cdot))(\tau)d\tau d\mu_F(y)\\
    =&\int \langle\tau\rangle^{\gamma}\mathcal{F}(\tilde{\nu}_{E}^y)(\tau)\mathcal{F}^{-1}(\varphi(y,\cdot))(\tau)d\tau d\mu_F(y)=\langle \partial^{\gamma}_t(\tilde{\nu}_{E}^y),\varphi\rangle.
    \end{split}
\end{equation}

That is, it holds that
\begin{equation}\label{weakconvergence}
\partial^{\gamma}_t(\tilde{\nu}_{E}^{y})^{\delta}(t)\rightharpoonup \partial^{\gamma}_t\tilde{\nu}_E^y(t)\text{ in }\mathcal{S}'(\R^d\times \R_{+}). 
\end{equation}

The function $t\mapsto t^{d-1}$ is smooth and very well behaved away from the origin so by defining 
$(\nu_E^y)^{\delta}(t)=A_t(\mu_E^{\delta})(y)$ we still have weak convergence to a finite nonzero Borel measure $\nu_E^y$ supported in $\Delta^y(E)$ (and there is also weak convergence for the fractional $\gamma$ derivatives).

Suppose that we prove that 
\begin{equation}\label{thickened2}
\|\partial_{t}^{1/p+\varepsilon}(\nu_E^y)^{\delta_1}(t)\|_{L^{p}(\mu_F^{\delta_{2}} \times dt )}\lesssim 1,
\end{equation}
with implicit constant independent of $\delta_1,\delta_2>0$. Then, sending $\delta_2$ to $0$ we get 
\begin{equation}\label{thickened1}
\|\partial_{t}^{1/p+\varepsilon}(\nu_E^y)^{\delta_1}(t)\|_{L^{p}(d\mu_F \times dt )}\lesssim 1,
\end{equation}
uniformly in $\delta_1>0$. Then by using that $\mathcal{S}(\R^{n}\times \R)$ is dense in $L^{p}(d\mu_F dt)$ and the norm in $L^{p}(d\mu_F dt)$ is realized by dualization against Schwartz functions (see \cite[Page 270]{Mattilabook2015}) combined with the weak convergence in (\ref{weakconvergence}) we recover the desired estimate (\ref{reduction2}) where $\delta_1,\delta_2$ are dropped.

To keep the notation lighter, we will drop the mollifications in the arguments below. Still, the reader should keep in mind that we can assume that a measure $\mu$ was replaced with their smooth mollifications $\mu^{\delta}$, and if there is more than one compact set involved say $E_1,\,E_2,\,\dots,\, E_m$ one could replace $\mu_{E_i}$ with $\mu_{E_i}^{\delta_i}$. The bounds we prove will be independent on $\delta_i$s so limiting arguments similar to the one above apply. 

We now prove (\ref{reduction2}) with a duality strategy inspired by \cite[Theorem 4.1]{IKSTU}.

Let $P_j$ be the $j$th Littlewood-Paley projection piece of $f$, as in (\ref{Littlepaley}). By duality, it is enough to show that for any $g\in L^{p'}(d\mu_F dt)$ with  $\|g\|_{L^{p'}(d\mu_F\times dt)}=1$ one has that
\begin{equation}
  \left|  \int_{1}^{2}\int \partial_t^{1/p+\varepsilon}\nu_E^{y}(t) g(y,t)d\mu_F(y)dt\right|\lesssim 1,
\end{equation}
with implicit constant independent of $g$. Decomposing into Littlewood-Paley pieces, we have 

\begin{equation}
    \begin{split}
        \int_{1}^{2}\int &\partial_t^{1/p+\varepsilon}\nu_E^{y}(t) g(y,t)d\mu_F(y)dt\\
        = & \sum_{j=0}^{\infty} \int_{1}^{2}\int \partial_t^{1/p+\varepsilon}A_t(P_j\mu_{E})(y)g_t(y)d\mu_F(y)dt,
    \end{split}
\end{equation}
 where we used the notation $g_t(y)=g(y,t)$. By Plancherel, we can replace the measure function $g_t\mu_F$ with a Littlewood piece as well, say $\tilde{P_j}(g_t d\mu_F)$ for $\tilde{P}_j=\sum_{i=-2}^{2}P_{j+i}$. Next, we do
\begin{equation}\label{computationthm1}
    \begin{split}
       \sum_{j=0}^{\infty}& \int_{1}^{2}\int \partial_t^{1/p+\varepsilon}A_t(P_j\mu_{E})(y) \tilde{P}_j(g_td\mu_F)(y)dy dt\\
       \lesssim & \sum_{j\geq 0} \|\partial_t^{1/p+\varepsilon}A_{t}(P_j\mu_E)(y)\|_{L^{p}(dy dt)}\|\tilde{P}_j(g_t \mu_F)(y)\|_{L^{p'}(dy dt)}\\
       \lesssim & \sum_{j\geq 0} \| \mu_E^j\|_{L^{p}_{-\eta+2\varepsilon}}\|\tilde{P}_j(g_t \mu_F)(y)\|_{L^{p'}(dy dt)}
    \end{split}
\end{equation}
where we used the assumed local smoothing (\ref{partiallocalsmoothing}). 

Next, we will need some standard estimates. First, we recall the following lemma from  \cite[Lemma 2.4]{IL19} that will be used multiple times in this paper.

\begin{lem}[\cite{IL19}]\label{IL lemma}
Let $1\leq p\leq \infty$, $\gamma\in \R$ and $\mu $ Frostman measure in $\R^d$ satisfying $\mu(B(x,r))\leq C_{\varepsilon} r^{s-\varepsilon}$ for all $x\in \R^d$ and $r>0$. Then, for all $j\geq 0$ and $\mu^j=P_j(\mu)$,
$$\|\mu^j\|_{L^{p}_{\gamma}(\R^d)}\lesssim_{\varepsilon} 2^{j(\gamma+(d-s+\varepsilon)\frac{1}{p'})}.$$   
\end{lem}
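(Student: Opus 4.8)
The plan is to separate the Bessel-potential weight from the $L^p$ size of the Littlewood--Paley piece, and then estimate the latter by interpolating between a trivial $L^1$ bound and an $L^\infty$ bound that extracts the gain $d-s$ from the Frostman condition via a dyadic tail decomposition. This lemma is quoted from \cite{IL19}, but the argument is short, so we sketch it.

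\textbf{Step 1: Removing the derivative.} Since $\widehat{\mu^j}=\beta(2^{-j}\cdot)\hat\mu$ is supported in the annulus $\{|\xi|\sim 2^j\}$ (with the evident modification when $j=0$), write $\langle\xi\rangle^\gamma\widehat{\mu^j}(\xi)=m_j(\xi)\widehat{\mu^j}(\xi)$, where $m_j(\xi)=\langle\xi\rangle^\gamma\tilde\beta(2^{-j}\xi)$ and $\tilde\beta\in C_c^\infty$ equals $1$ on the support of $\beta$. A routine scaling computation, using $\langle\xi\rangle^\gamma\sim 2^{j\gamma}$ together with $|\partial^\alpha_\xi\langle\xi\rangle^\gamma|\lesssim_\alpha 2^{j(\gamma-|\alpha|)}$ on the support of $\tilde\beta(2^{-j}\cdot)$, gives $\|m_j\check{\ }\|_{L^1(\R^d)}\lesssim 2^{j\gamma}$ uniformly in $j$. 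By Young's inequality, $\|\mu^j\|_{L^p_\gamma}=\|m_j\check{\ }*\mu^j\|_{L^p}\lesssim 2^{j\gamma}\|\mu^j\|_{L^p}$, so it remains to prove $\|\mu^j\|_{L^p}\lesssim_\varepsilon 2^{j(d-s+\varepsilon)/p'}$.

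\textbf{Step 2: The endpoint bounds.} Write $\mu^j=\mu*\psi_j$ with $\psi_j(x)=2^{jd}\psi(2^jx)$ and $\psi=\beta\check{\ }\in\mathcal S(\R^d)$. Since $\mu$ is a finite measure, $\|\mu^j\|_{L^1}\le\mu(\R^d)\,\|\psi_j\|_{L^1}=\mu(\R^d)\,\|\psi\|_{L^1}\lesssim 1$. For the $L^\infty$ bound, fix $x$ and split the integration in $\mu^j(x)=\int\psi_j(x-y)\,d\mu(y)$ into the region $|x-y|\le 2^{-j}$ and the dyadic shells $|x-y|\sim 2^{k-j}$ for $k\ge 1$. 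On the $k$-th shell $|\psi_j(x-y)|\lesssim_N 2^{jd}2^{-kN}$ for every $N$, while the Frostman hypothesis gives $\mu(B(x,2^{k-j}))\lesssim_\varepsilon 2^{(k-j)(s-\varepsilon)}$; choosing $N$ large and summing the geometric series in $k\ge 0$ yields $\|\mu^j\|_{L^\infty}\lesssim_\varepsilon 2^{jd}2^{-j(s-\varepsilon)}=2^{j(d-s+\varepsilon)}$.

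\textbf{Step 3: Interpolation.} For $1\le p\le\infty$, $\|\mu^j\|_{L^p}^p\le\|\mu^j\|_{L^\infty}^{p-1}\|\mu^j\|_{L^1}\lesssim_\varepsilon 2^{j(d-s+\varepsilon)(p-1)}$, that is $\|\mu^j\|_{L^p}\lesssim_\varepsilon 2^{j(d-s+\varepsilon)/p'}$, the endpoints $p=1,\infty$ being exactly Step 2. Combining with Step 1 gives $\|\mu^j\|_{L^p_\gamma}\lesssim_\varepsilon 2^{j(\gamma+(d-s+\varepsilon)/p')}$, as claimed. The only mildly delicate point is the $L^\infty$ estimate in Step 2: one must use the Schwartz decay of $\psi$ to beat the polynomial growth of $\mu$ on large shells, which is precisely why the exponent $d-s$ appears in place of $d$; the rest is bookkeeping.
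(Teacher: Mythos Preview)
Your argument is correct. The paper does not actually supply its own proof of this lemma; it simply records it as \cite[Lemma~2.4]{IL19}. Your three-step sketch (peel off the Bessel weight at cost $2^{j\gamma}$, interpolate between the trivial $L^1$ bound and the Frostman-based $L^\infty$ bound) is the standard argument and matches, essentially verbatim, the proof the paper \emph{does} give for the closely related Lemma~\ref{lem proj of product} immediately afterwards (endpoints $p=1$ via Fubini, $p=\infty$ via the Schwartz tail of $\check\beta$ plus the ball condition, then interpolation). So there is nothing to compare: you have written out what the paper outsources to the reference, and in the same spirit it uses elsewhere.
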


Next, we have the following lemma.

\begin{lem}\label{lem proj of product}
     For all $1\leq p\leq \infty$ and $\mu$ as in Lemma \ref{IL lemma}, one has for $h(y)\in L^p(d\mu)$ that for all $j\geq 0$,
\begin{equation}
\|P_j(\mu h )\|_{L^{p}(dy)}\lesssim_\varepsilon 2^{j(d-s+\varepsilon)\frac{1}{p'}}\|h\|_{L^{p}(d\mu)}.
\end{equation}
\end{lem}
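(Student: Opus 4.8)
The plan is to reduce the $L^p(dy)$ bound on $P_j(\mu h)$ to the known $L^p_\gamma$ estimate for $P_j\mu$ from Lemma \ref{IL lemma} by interpolation between the endpoints $p=1$ and $p=\infty$, or alternatively to run a direct kernel estimate using the rapid decay of the Littlewood--Paley kernel. Concretely, write $P_j(\mu h) = (\mu h)*\check\beta_j$ where $\check\beta_j(x) = 2^{jd}\check\beta(2^j x)$ has $\|\check\beta_j\|_{L^1}\lesssim 1$ and, more importantly, $|\check\beta_j(x)|\lesssim 2^{jd}(1+2^j|x|)^{-N}$ for any $N$. Then
\begin{equation}\label{eq:kernelconv}
P_j(\mu h)(y) = \int \check\beta_j(y-x)\, h(x)\, d\mu(x).
\end{equation}

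First I would handle $p=\infty$: by the Frostman condition $\mu(B(x,r))\lesssim_\varepsilon r^{s-\varepsilon}$, a standard dyadic annular decomposition of the kernel in \eqref{eq:kernelconv} gives $\int |\check\beta_j(y-x)|\,d\mu(x)\lesssim_\varepsilon 2^{j(d-s+\varepsilon)}$, uniformly in $y$, so that $\|P_j(\mu h)\|_{L^\infty(dy)}\lesssim_\varepsilon 2^{j(d-s+\varepsilon)}\|h\|_{L^\infty(d\mu)}$. This is exactly the claimed bound at $p=\infty$ (where $1/p'=1$). Next, the endpoint $p=1$: by Fubini,
\begin{equation}\label{eq:L1end}
\|P_j(\mu h)\|_{L^1(dy)} \leq \int\!\!\int |\check\beta_j(y-x)|\,|h(x)|\,d\mu(x)\,dy = \int |h(x)|\left(\int|\check\beta_j(y-x)|\,dy\right)d\mu(x)\lesssim \|h\|_{L^1(d\mu)},
\end{equation}
since $\|\check\beta_j\|_{L^1(dy)}\lesssim 1$; this is the claimed bound at $p=1$, where $1/p'=0$. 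Finally, interpolating \eqref{eq:L1end} with the $L^\infty$ bound via the Riesz--Thorin / Stein interpolation theorem applied to the linear operator $h\mapsto P_j(\mu h)$ (interpolating the pair of measure spaces $(d\mu,dy)$ on the source and $(dy)$ on the target) yields $\|P_j(\mu h)\|_{L^p(dy)}\lesssim_\varepsilon 2^{j(d-s+\varepsilon)/p'}\|h\|_{L^p(d\mu)}$ for all $1\leq p\leq\infty$, which is the assertion.

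The only mildly delicate point is making the interpolation rigorous, since the source space carries the measure $d\mu$ while the target carries Lebesgue measure $dy$; but $h\mapsto P_j(\mu h)$ is genuinely linear and bounded $L^1(d\mu)\to L^1(dy)$ and $L^\infty(d\mu)\to L^\infty(dy)$, so complex interpolation of operators between (possibly different) measure spaces applies directly. Alternatively, one can avoid interpolation entirely: split $d\mu = |h|^{p/p'}\cdot|h|^{p/p}\,d\mu$ and apply Hölder inside \eqref{eq:kernelconv} with the kernel weight $|\check\beta_j(y-x)|$, giving $|P_j(\mu h)(y)|^p \lesssim \left(\int|\check\beta_j(y-x)|\,d\mu(x)\right)^{p/p'}\int|\check\beta_j(y-x)|\,|h(x)|^p\,d\mu(x)$; bounding the first factor by $2^{j(d-s+\varepsilon)/p'\cdot p}$ using the $L^\infty$ annular estimate above, then integrating in $y$ and using $\|\check\beta_j\|_{L^1(dy)}\lesssim 1$ on the second factor, recovers the bound directly. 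I would present this second, self-contained argument as the main proof, as it is cleanest and mirrors the kernel estimates already used elsewhere in the paper.
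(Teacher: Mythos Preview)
Your proposal is correct and follows essentially the same approach as the paper: establish the endpoints $p=1$ (via Fubini and $\|\check\beta_j\|_{L^1}\lesssim 1$) and $p=\infty$ (via the kernel decay and the Frostman condition), then interpolate. Your additional self-contained H\"older argument is a nice alternative that the paper does not include, but it is a minor variation rather than a genuinely different route.
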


\begin{proof}
    We check the Lemma for $p=1$ and $p=\infty$ and then use interpolation.
For $p=1$, by recalling the definition of $P_j$ in Section \ref{sec: tools} we have
\begin{equation}
    \int |P_j(\mu h)(y)|dy\leq \int \int |2^{jd}\check{\beta}(2^j(y-x)) h(x)|d\mu(x) dy\leq \|\check{\beta}\|_{L^1(\R^d)}\int |h(x)|d\mu(x)\lesssim \|h\|_{L^{1}(\mu)}
\end{equation}
where in the second inequality, we used Fubini's Theorem to integrate first in $y$. For $p=\infty$, we have 
$$|P_j(\mu h)(y)|\leq 2^{jd}\int |\check{\beta}(2^{j}(y-x))||h(x)|d\mu(x).$$
From the fast decay of $\check{\beta}$ the main contribution comes from $|x-y|\leq 2^{-j}$ and
\begin{equation}
    \begin{split}
     2^{jd}\int_{x\colon |x-y|\leq 2^{-j} } |h(x)| d\mu(x)\lesssim 2^{jd}\mu(B(y,2^{-j}))\|h\|_{L^{\infty}(\mu)}\lesssim 2^{j(d-s+\varepsilon)}\|h\|_{L^{\infty}(\mu)}.
    \end{split}
\end{equation}
\end{proof}

As an immediate consequence, one has
\begin{equation}\label{factiks}
 \|P_j(g_t \mu)(y)\|_{L^{q}(dy dt)}\lesssim_{\varepsilon} 2^{j(d-s+\varepsilon)\frac{1}{q'}}\|g\|_{L^{q}(d\mu \times dt)}  
\end{equation}(also see \cite[page 78]{IKSTU}).

We remark that if $\mu$ is a compactly supported finite measure satisfying $\mu(B(x,r))\leq C r^{s_{\mu}}$ for all $r>0$ and $x\in \R^d$, then for any $\delta>0$, it is also true for some $C'>C$ that $\mu^{\delta}(B(x,r))\leq C'r^{s_{\mu}}$ where $C'$ is independent of $\delta>0$, so there is no problem in replacing $\mu$ by its mollifications in the lemmas above.

Going back to (\ref{computationthm1}), we will have
\begin{equation}
    \begin{split}
      \sum_{j\geq 0} \| \mu_E^j\|_{L^{p}_{-\eta+2\varepsilon}(\R^d)}\|\tilde{P}_j(g_t \mu_F)(y)\|_{L^{p'}(dy dt)}\lesssim \sum_{j\geq 0} 2^{j(-\eta+(d-s_E)\frac{1}{p'}+(d-s_F)\frac{1}{p}+3\varepsilon)}.
    \end{split}
\end{equation}

Finally, note that the series is summable for some small $\varepsilon>0$ since 
$d-\eta<\frac{1}{p'}s_E+\frac{1}{p} s_F$.

The reader should note that, although we gave our argument for a fixed Frostman probability measure $\mu_E$, for $A\subset E, \,\mu_E(A)>0$ if we replace $\mu_E$ with the restricted measure $\mu_A$ as in (\ref{def: restrictionmeasure}) then the argument still holds, with the implicit constant in the estimate \eqref{reduction2} merely being multiplied by a factor of $\mu_E(A)^{-1}$. This simple observation will be used in the proof of the result for trees later.

\section{Proof of Theorem \ref{Thm:pinnednonemptytreeinanydimensionwithlocalsmoothing}: Toy cases}\label{sec:toycases}

In this section, we explore a few toy cases of trees to illustrate the proof strategy of Theorem \ref{Thm:pinnednonemptytreeinanydimensionwithlocalsmoothing}. We start with chains, which are some of the simplest types of trees. Given a compact set $E\subset \R^d$, $d\geq 2$ and $k\geq 1$, let us consider the set of $k$ chains in $E$ given by 
$$C(E,k):=\{(|x_0-x_1|,|x_1-x_2|,\dots ,|x_{k-1}-x_k|)\colon x_0,x_1,\dots, x_k\text{ distinct points in } E\}$$
and its pinned (at the starting point) variant  
$$C^{x}(E,k):=\{(|x-x_1|,|x_1-x_2|,\dots ,|x_{k-1}-x_k|)\colon x_1,\dots, x_k \text{ distinct points in } E\}\subset \R^{k}_{+}.$$

Instead of restricting ourselves to pins at the starting point of the chain, we can also allow pins in more general locations. Let $0\leq p\leq k $ correspond to the location of the pin in the chain. 
\begin{equation}
    \begin{split}
    C^{x,p}(E,k):=\{(|x_0-x_1|,&|x_1-x_2|,\dots, |x_{p-1}-x|,|x-x_{p+1}|,\dots,|x_{k-1}-x_k|):\\
    & x_0,x_1, \dots, x_{p-1},x_{p+1}, \dots , x_k\text{ distinct points in }E\}.    
    \end{split}
\end{equation}

As we will see in the definition in the next section, $C^{x,p}$ are special types of pinned trees. In the first two toy cases below, we only explore the case when $p=0$, i.e. the endpoint pin case. In the proof of the general case of Theorem \ref{Thm:pinnednonemptytreeinanydimensionwithlocalsmoothing}, to be presented in the next section, one will see that the degree of the pinned vertex of the tree plays a key role.

\subsection{Toy case: 2 chains with a pin at an endpoint}\label{sec: 2chains}

We begin by working with a slightly smaller 2-chain set. Given $E\subset \mathbb{R}^d$ with ${\rm dim}(E)=:s_E$, let $E_0$, $E_1$, $E_2$ be pairwise separated subsets of $E$ with positive $(s_E-\varepsilon)$-dimensional Hausdorff measure. (Such a construction is standard. For example, it can be found in \cite[Lemma 3]{BIO23}.) Then there exist probability measures $\mu_0$, $\mu_1$, $\mu_2$ supported on $E_0$, $E_1$, $E_2$ respectively satisfying the Frostman condition (\ref{Frostman}). We will be studying the pinned chain set
\[
C^x(E_1, E_2, 2):=\{(|x-x_1|,|x_1-x_2|):\, x_1\in E_1,\, x_2\in E_2\},\quad x\in E_0.
\]

One can then construct a probability measure in $C^{x}(E_1,E_2,2)$ by taking the pushforward of the measure $\mu_1\times \mu_2$ by the map $\Phi^{x}:E_1\times E_2\rightarrow \R^2$, $\Phi^{x}(x_1,x_2)=(|x-x_1|,|x_1-x_2|)$ which will be given (actually for the mollifications of $\mu_1$, $\mu_2$) as

$$\tilde{\nu}^{x}(t_1,t_2)=(t_1t_2)^{d-1}A_{t_1}(\mu_1 A_{t_2}\mu_2)(x)$$
where 
$A_t(f)(x)$ is as in (\ref{def: averageAt}). Indeed, for any test function $\varphi\in C^{\infty}(\R^2)$, by using the change of variables $(y_1,y_2)=(x_1-x,x_2-x_1)$ followed by polar coordinates we get

\begin{equation}\label{density2chainpinnedatleaf}
    \begin{split}
        \int \varphi(t_1,t_2) (\tilde{\nu}^x)^{\mathbf{\delta}}(t_1,t_2)=&\int\int \varphi(|x_1-x|,|x_2-x_1|)\mu_1^{\delta_1}(x_1)\mu_2^{\delta_2}(x_2)\,dx_1dx_2\\
        =&\int \int \varphi(|y_1|,|y_2|)\mu_1^{\delta_1}(y_1+x)\mu_2^{\delta_2}(y_1+y_2+x)\,dy_1dy_2\\
        =&\int \int \varphi(t_1,t_2)(t_1t_2)^{d-1}\cdot\\
        &\qquad\int_{S^{d-1}}\int_{S^{d-1}}\mu_1^{\delta_1}(x+t_1\omega_1)\mu_{2}^{\delta_2}(x+t_1\omega_1+t_{2}\omega_2)
        \,d\sigma(\omega_2) d\sigma(\omega_1)dt_1dt_2\\
        =&\int\int \varphi(t_1,t_2)  (t_1t_2)^{d-1}\cdot\\
        &\qquad\int_{S^{d-1}}\mu_1^{\delta_1}(x+t_1\omega_1)A_{t_2}(\mu_2^{\delta_2})
        (x+t_1\omega_1) \,d\sigma(\omega_1) dt_1dt_2\\
        =&\int\int \varphi(t_1,t_2) (t_1t_2)^{d-1} A_{t_1}(\mu_1^{\delta_1} A_{t_2}(\mu_{2}^{\delta_2}))(x) \,dt_1dt_2.
    \end{split}
\end{equation}

Since the factor $t_1^{d-1}t_2^{d-1}$ is smooth in $[1,2]^{2}$ and is bounded from above and below, let us drop it as before and redefine 

$$\nu^{x}(t_1,t_2)=A_{t_1}(\mu_1 A_{t_2}\mu_2)(x)=\sigma_{t_1}*(\mu_1(\sigma_{t_2}*\mu_2))(x).$$The measure $\nu^{x}$ is supported in $C^{x}(E_1,E_2,2)$ and we would like to show that it is a continuous function of $(t_1,t_2)$ for $\mu_0$ a.e. $x\in E_0$, under certain dimensional assumption on the set $E$.

For fixed $x$, by Sobolev embedding in $\R$, we can bound for any $\gamma>1/p$ (here $p$ is the parameter in the assumed local smoothing estimate \eqref{partiallocalsmoothing}),

\begin{equation}\label{continuityofnux}
    \begin{split}
        &|\nu^x(t_1+h_1,t_2+h_2)-\nu^x(t_1,t_2)|\\
        \leq & |\nu^x(t_1+h_1,t_2+h_2)-\nu^x(t_1,t_2+h_2)|
        +|\nu^x(t_1,t_2+h_2)-\nu^x(t_1,t_2)|\\
        \lesssim &|h_1|^{\gamma-1/p} \|A_{s_1}(\mu_1 A_{t_2+h_2}\mu_2)(x)\|_{L^{p}_{\gamma}(ds_1)}
        +|h_2|^{\gamma-1/p} \|A_{t_1}(\mu_1 A_{s_2}\mu_2)(x)\|_{L^{p}_{\gamma}(ds_2)},\\
       \leq &{ |h_1|^{\gamma-1/p} \sup_{s_2\sim 1}\|\partial_{s_1}^{\gamma}A_{s_1}(\mu_1 A_{s_2}\mu_2)(x)\|_{L^{p}(ds_1)}
        +|h_2|^{\gamma-1/p} \sup_{s_1\sim 1}\|\partial_{s_2}^{\gamma}A_{s_1}(\mu_1 A_{s_2}\mu_2)(x)\|_{L^{p}(ds_2)},}\\
        \lesssim &{(|h_1|^{\gamma-\frac{1}{p}}+|h_2|^{\gamma-\frac{1}{p}})\|\partial_{s_1}^{\gamma}\partial_{s_2}^{\gamma}A_{s_1}(\mu_1 A_{s_2}\mu_2)(x)\|_{L^{p}(ds_1ds_2)}}
    \end{split}
\end{equation}
where from the second to third line in the equation above we are making use of Theorem \ref{tool: embeddingthm} to guarantee H\"older continuity for the functions 
$s\mapsto \nu^x(s,t_2+h_2)$ and $s\mapsto \nu^x(t_1,s)$.

Let us suppose temporarily that for any $t_1,t_2\in (1,2)$ the right-hand side is bounded by $C_x(|h_1|^{\gamma-1/p}+|h_2|^{\gamma-1/p})$ for $\mu_0$-a.e. $x$. Pick one such $x$, call it $x_0$, where this holds. Then we have that
\[
\frac{|\nu^{x_0}(t_1+h_1,t_2+h_2)-\nu^{x_0}(t_1,t_2)|}{|(h_1,h_2)|^{\gamma-1/p}}\lesssim \frac{|\nu^{x_0}(t_1+h_1,t_2+h_2)-\nu^{x_0}(t_1,t_2)|}{|h_1|^{\gamma-1/p}+|h_2|^{\gamma-1/p}}\le C_{x_0}
\]
for all sufficiently small $(h_1,h_2)$ and $(t_1,t_2)\in (1,2)^2$. Thus, the density $\nu^{x_0}$ is continuous for all $t_1,t_2\in(1,2)$ and consequently $C^{x_0}(E,2)$ has nonempty interior.

The following proposition will be enough to guarantee that the desired bound for the righthand side of (\ref{continuityofnux}) for $\mu_0$ a.e. $x$.

\begin{prop}\label{2chainsprop}
   Let $d\geq 2$. Assume that the local smoothing estimate \eqref{partiallocalsmoothing} holds for some $p\in (2,\infty)$ and $\eta>0$, and that $E$ is a Borel subset of $\R^d$ with $\Hdim(E)>d-\eta$. Let $\mu_i$, $i=0,1,2$, be Frostman probability measures on $E_i$, $i=0,1,2$, constructed as in the above, satisfying the condition (\ref{Frostman}) with $\varepsilon$ sufficiently small. Let $\nu^x(t_1,t_2)=A_{t_1}(\mu_1 A_{t_2} \mu_2)(x)$ and $\gamma=1/p+\varepsilon$, then
 $$\left(\int_{\R^d} \int_{[1,2]^2}|\partial_{t_1}^{\gamma}\partial_{t_2}^{\gamma}\nu^x(t_1,t_2)|^pdt_1dt_2 \,d\mu_0(x)\right)^{1/p}\leq C_{\varepsilon} $$
 for some $C_{\varepsilon}>0$.  
\end{prop}

\begin{proof}[Proof of Proposition \ref{2chainsprop}]
    
    As before, we will be working with mollified versions of the measures $\mu_0, \mu_1,\mu_2$. We still use the same notation for them for the sake of simplicity. 

 By duality, 
    \[
   \begin{split}
\|\partial_{t_1}^{\gamma}&\partial_{t_2}^{\gamma}\nu^x(t_1,t_2)\|_{L^p(d\mu_0 dt_1dt_2)}\\=&\sup_{\|g\|_{L^{p'}(d\mu_0 dt_1dt_2)}=1} \left|\int_{1}^{2} \int_{1}^{2}\int_{\R^d} \partial_{t_1}^{\gamma}\partial_{t_2}^{\gamma}\nu^x(t_1,t_2)g(x,t_1,t_2)\,d\mu_0(x)dt_1dt_2\right|.
    \end{split}
    \]

  Fix \(g\in L^{p'}(d\mu_0\,dt_1\,dt_2)\) with
\[
\|g\|_{L^{p'}(d\mu_0\,dt_1\,dt_2)}=1,
\] and write \(g_{t_1,t_2}(x):=g(x,t_1,t_2)\). Decomposing in the \(x\)-variable we get:
\begin{equation}\label{eq:2chaindualityargument}
\begin{aligned}
&\left|
\int
\partial_{t_1}^{\gamma}
A_{t_1}\bigl(\mu_1\,\partial_{t_2}^{\gamma}A_{t_2}\mu_2\bigr)(x)
g(x,t_1,t_2)\,d\mu_0(x)\,dt_1\,dt_2
\right|                                      \\
&\qquad\lesssim
\sum_{j\geq 0}
\left\|
\partial_{t_1}^{\gamma}
A_{t_1}
P_j\bigl(\mu_1\,\partial_{t_2}^{\gamma}A_{t_2}\mu_2\bigr)
\right\|_{L^p(dx\,dt_1\,dt_2)}
\left\|
\widetilde P_j(g_{t_1,t_2}\mu_0)
\right\|_{L^{p'}(dx\,dt_1\,dt_2)} .
\end{aligned}
\end{equation}
    
To estimate the first factor, we note that by using the local smoothing hypothesis in the variables $(x,t_1)$ with $t_2$ fixed and then taking $L^{p}([1,2])$ norm in the variable $t_2$ we get
\[
\left\|
\partial_{t_1}^{\gamma}
A_{t_1}
P_j\bigl(\mu_1\,\partial_{t_2}^{\gamma}A_{t_2}\mu_2\bigr)
\right\|_{L^p(dx\,dt_1\,dt_2)}
\lesssim
2^{j(-\eta+2\varepsilon)}
\left\|
P_j\bigl(\mu_1\,\partial_{t_2}^{\gamma}A_{t_2}\mu_2\bigr)
\right\|_{L^p(dx\,dt_2)} .
\]

By Lemma \ref{lem proj of product} applied to the measure \(\mu_1\) and integrated in $t_2$,
\[
\left\|
P_j\bigl(\mu_1\,\partial_{t_2}^{\gamma}A_{t_2}\mu_2\bigr)
\right\|_{L^p(dx\,dt_2)}
\lesssim
2^{j(d-s_E+\varepsilon)/p'}
\left\|
\partial_{t_2}^{\gamma}A_{t_2}\mu_2
\right\|_{L^p(d\mu_1\,dt_2)} .
\]

From the quantitative estimate we proved to get pinned nonempty interior for Falconer pinned distance problem, i.e., estimate (\ref{thickened2}) in the proof of Theorem \ref{Thm:pinnednonemptyassumingLS}, we know that since $s_{E}>d-\eta$, 
$$\|\partial_{t_{2}}^{1/p+\varepsilon}A_{t_2} \mu_2\|_{L^{p}(d\mu_1 dt_2)}\lesssim_{\varepsilon} 1, $$
for each $\varepsilon$ sufficiently small. Hence
\[
\left\|
\partial_{t_1}^{\gamma}
A_{t_1}
P_j\bigl(\mu_1\,\partial_{t_2}^{\gamma}A_{t_2}\mu_2\bigr)
\right\|_{L^p(dx\,dt_1\,dt_2)}
\lesssim
2^{j(-\eta+2\varepsilon)}
2^{j(d-s_E+\varepsilon)/p'} .
\]
For the second factor in the right-hand side of (\ref{eq:2chaindualityargument}), we use that as consequence of Lemma \ref{lem proj of product}, 
\[
\left\|
\widetilde P_j(g_{t_1,t_2}\mu_0)
\right\|_{L^{p'}(dx\,dt_1\,dt_2)}
\lesssim
2^{j(d-s_E+\varepsilon)/p}
\|g\|_{L^{p'}(d\mu_0\,dt_1\,dt_2)} .
\]
Therefore the whole sum is bounded by
\[
\sum_{j\geq 0}
2^{j(-\eta+2\varepsilon)}
2^{j(d-s_E+\varepsilon)/p'}
2^{j(d-s_E+\varepsilon)/p}
=
\sum_{j\geq 0}
2^{j(-\eta+d-s_E+3\varepsilon)} .
\]
This series converges after choosing \(\varepsilon>0\) sufficiently small, because
\(s_E>d-\eta\).

    \end{proof}

\subsection{Another toy case: 3-chains}
In this section, we will give a sketch of the proof of the nonempty interior result for 3-chains which are pinned at a leaf (endpoint). This is a prototypical case that illustrates many of the techniques that we will need when dealing with the most general case of arbitrary trees. We will focus on showcasing the inductive scheme, i.e. on how one can apply the 2-chain result proved in the previous subsection to the 3-chain case.

Similarly to the 2-chain case, one starts with finding four pairwise separated subsets $E_0,E_1,E_2,E_3$ in $E$ and constructing Frostman measures $\mu_0,\mu_1,\mu_2,\mu_3$ on them. After dropping some $t_i$ dependent terms (smooth and comparable to 1), one wants to study the continuity of
\[
A^{(3)}_{t_1,t_2,t_3}(\mu_1,\mu_2,\mu_3)(x):=A_{t_1}(\mu_1 A_{t_2}(\mu_2A_{t_3}\mu_3))(x),\quad x\in E_0.
\]For a fixed $x$, the same Sobolev embedding argument reduces the matter to estimating the following:
\begin{equation}\label{eqn: 3-chain}
\|\partial^\gamma_{t_1}\partial^\gamma_{t_2}\partial^\gamma_{t_3}A^{(3)}_{t_1,t_2,t_3}(\mu_1,\mu_2,\mu_3)(x)\|_{L^p(d\mu_0dt_1dt_2dt_3)}<\infty,
\end{equation}
where $\gamma=\frac{1}{p}+\varepsilon$ (for some $\varepsilon$ depending on $d,s_E,\eta$), where as a reminder $dt_i$ stands for the Lebesgue measure in the interval $[1,2]$.

By duality, for some $g(y,t_1,t_2,t_3)=g_{\mathbf{t}}(y)$ with $\|g\|_{L^{p'}(d\mu_0d\mathbf{t})}=1$, $\mathbf{t}:=(t_1,t_2,t_3)$, one can bound
\[
\begin{split}
&\|\partial^\gamma_{t_1}\partial^\gamma_{t_2}\partial^\gamma_{t_3}A^{(3)}_{t_1,t_2,t_3}(\mu_1,\mu_2,\mu_3)(x)\|_{L^p(d\mu_0d\mathbf{t})}\\
\leq & \sum_{j\geq 0}\left|\int_{[1,2]^3} \int_{\R^d} \partial^\gamma_{t_1}A_{t_1}(P_j(\mu_1\partial_{t_2}^{\gamma}\partial_{t_3}^{\gamma}A^{(2)}_{t_2,t_3}(\mu_2,\mu_3)))(y)\widetilde{P}_j(g_{\mathbf{t}}(y)\mu_0(y))\,dyd\mathbf{t}\right|\\
\lesssim & \sum_{j\geq 0}\left\| \partial^\gamma_{t_1}A_{t_1}(P_j(\mu_1\partial_{t_2}^{\gamma}\partial_{t_3}^{\gamma}A^{(2)}_{t_2,t_3}(\mu_2,\mu_3)))(y)\right\|_{L^p(dyd\mathbf{t})}\left\| \widetilde{P}_j[g_{\mathbf{t}}(y)\mu_0(y)]\right\|_{L^{p'}(dyd\mathbf{t})}. 
\end{split}
\]Here, we have adopted the notation $A^{(2)}_{t,s}(\mu,\nu):=A_{t}(\mu A_s(\nu))$. The second factor can be estimated directly using Lemma \ref{lem proj of product} for each fixed $\mathbf{t}$ and then integrating in $\mathbf{t}$. To handle the first factor, we use the 2-chain result and a combination of techniques that have been used in the previous subsection. More precisely, by local smoothing and Lemma \ref{lem proj of product}, one has
\begin{align*}
   \| \Vert \partial^{\gamma}_{t_1}A_{t_1}&(P_j(\mu_1 \partial_{t_2}^{\gamma}\partial_{t_3}^{\gamma}A_{t_2,t_3}^{(2)}(\mu_2,\mu_3)))\Vert_{L^p(dy\,dt_1)}\|_{L^p(dt_2dt_3)} \\
   \lesssim &2^{j(-\eta+2\varepsilon)}\|\Vert P_j(\mu_1 \partial_{t_2}^{\gamma}\partial_{t_3}^{\gamma} A_{t_2,t_3}^{(2)}(\mu_2,\mu_3))(y)\Vert_{L^p(dy)}\|_{L^p(dt_2dt_3)} \\
    \lesssim &2^{j(-\eta+2\varepsilon)}2^{\frac{j}{p'}(d-s_E+\varepsilon)}\Vert\partial_{t_2}^{\gamma}\partial_{t_3}^{\gamma} A_{t_2,t_3}^{(2)}(\mu_2,\mu_3)\Vert_{L^p(d\mu_1dt_2 dt_3)}.
    \end{align*}
    The result for 2-chains in Proposition \ref{2chainsprop} implies that
    \[
    \Vert \partial_{t_2}^{1/p+\varepsilon}\partial_{t_3}^{1/p+\varepsilon}A^{(2)}_{t_2,t_3}(\mu_2,\mu_3) \Vert_{L^p(d\mu_1\,dt_2\,dt_3)}\le C.
    \]
Since $s_E>d-\eta$, we can again choose $\epsilon>0$ sufficiently small so that the series in $j\geq 0$ converges.

\subsection{The final toy case: star graphs}

We will say a graph $G$ is a star graph if it is a tree and there exists a distinguished vertex $v$ such that every edge in $G$ has $v$ as one of its endpoints. Let $G_k$ denote a tree with $k+1$ vertices $v_0,\ldots,v_k$ where $v_0$ is the distinguished vertex (and also the point at which we pin).

We start by taking $E_0,E_1,\dots ,E_k$, a collection of $(k+1)$ pairwise separated subsets of $E$ with positive $(s_E-\varepsilon)$-dimensional Hausdorff measure. Construct for each $i$, $i=0,\cdots,k$, a probability measure $\mu_i$ supported on $E_i$, satisfying the Frostman condition \eqref{Frostman}.

Fix $x\in E_0$ and let $t_j=|v_0-v_j|$. Then the relevant density to consider (from Lemma \ref{lem split of densities} in the next section) is
\[
\nu^x(t_1,\ldots,t_k)=\prod_{i=1}^kA_{t_i}(\mu_i)(x).
\]
When $\dim_{\mathcal{H}}(E)>d-\eta$ (where $\eta$ is the parameter in the local smoothing estimate \eqref{partiallocalsmoothing}), this is easily seen to be continuous for $\mu_0$ a.e. $x$ since from the proof of Theorem \ref{Thm:pinnednonemptyassumingLS} that would be a product of continuous densities. However, our general induction argument for arbitrary trees will require us to prove the stronger quantitative estimate that there exists a constant $C_k$ (which may also depend on $\gamma=\frac{1}{p}+\varepsilon$ and $\mu_i$) such that for $\bm{t}=(t_1,t_2,\dots, t_k)$
\[
\left\Vert\prod_{i=1}^k\partial_{t_i}^{\gamma}A_{t_i}(\mu_i)(x)\right\Vert_{L^p(d\tilde{\mu}_0(x)\,d\bm{t})}\le C_k,
\]
for some $\gamma=\frac{1}{p}+\varepsilon$, and some measure $\tilde{\mu}_0$ on $E_0$ that is the normalized restriction of $\mu_0$ on a subset of positive $\mu_0$ measure. To construct $\tilde{\mu}_0$, we first recall from \eqref{reduction2} in the proof of Theorem \ref{Thm:pinnednonemptyassumingLS} that for each $i$, $$h_i(x):=\|\partial_{t_i}^{\gamma}A_{t_i}(\mu_i)(x)\|_{L^p(dt_i)}\in L^p(d\mu_0(x)).$$ Let $h(x):=\sum_{i=1}^k h_i(x)$. Note that $h\in L^p(d\mu_0)$ since it is a finite sum of functions in $L^p(\mu_0)$. So we can find a subset of $\tilde{E}_0\subset E_0$ satisfying $\mu_0(\tilde{E}_0)\ge 1-2^{-k}$ and $|h(x)|\le N_k$ for all $x\in \tilde{E}_0$ and some sufficiently large $N_k$. We finally define the measure
\[
\tilde{\mu}_0(S)=\frac{\mu_0(S\cap \tilde{E}_0)}{\mu_0(\tilde{E}_0)}.
\]

Returning to the estimate at hand, we have
\[
\left\Vert\prod_{i=1}^k \partial_{t_i}^{\gamma}A_{t_i}(\mu_i)\right\Vert_{L^p(d\tilde{\mu}_0\,d\bm{t})}\le N_k^{k-1} \Vert\partial_{t_1}^{\gamma}A_{t_1}(\mu_1)\Vert_{L^p(d\tilde{\mu}_0\,dt_1)}\le C_k,
\]
giving the desired estimate.

\section{Proof of Theorem \ref{Thm:pinnednonemptytreeinanydimensionwithlocalsmoothing}: General trees}\label{sec trees}

\subsection{Trees realized in a compact set}
In this section, we use similar notation to that used in \cite{OuTaylor} with a few changes.

A tree is a graph in which each pair of vertices are connected by exactly one path. Let $\mathcal{T}=(\mathcal{V},\mathcal{E})$ be a tree where $\mathcal{V}=\{v_0,v_1,\dots, v_n\}$ is the set of vertices of $\mathcal{T}$ and $\mathcal{E}\subset \{(v_i,v_j)\colon v_i,v_j\in \mathcal{V}\text{ and } i<j\}$ is the list of all the edges of the tree $\mathcal{T}$. It is not hard to check that if $\mathcal{T}$ is a tree with $n+1$ vertices then $\mathcal{T}$ has $n$ edges, that is, $|\mathcal{E}|=n$. In that case, we can refer to $\mathcal{T}$ as an $n$-tree of shape $\mathcal{E}$, and when it is convenient to recall that $\mathcal{T}$ has $n$ edges we may denote it $\mathcal{T}^{n}$.

Recall from \cite{OuTaylor} that a natural way to enumerate the set $\mathcal{E}$ is given by 
\begin{equation}\label{edgeenumeration}
    \mathcal{E}=\{(v_{i_1},v_{i_2}),\,(v_{i_3},v_{i_4}),\,\dots ,\,(v_{i_{2n-1}},v_{i_{2n}})\}
\end{equation}
where $0\leq i_1,i_2,\dots ,i_{2n}\leq n$ and the following conditions are satisfied:
$i_1\leq i_3\leq \dots \leq i_{2n-1} $ and $i_{2s}<i_{2t}$ whenever $s<t$ and $i_{2s-1}=i_{2t-1}$. 

\begin{defn}
 Let $\mathcal{T}=(\mathcal{V},\mathcal{E})$ be a tree and $E\subset \R^d$ a compact set. The \textit{edge-length set of $n$-trees of shape $\mathcal{E}$ (or $\mathcal{T})$ generated by $E$} can be defined as 
$$\Delta_{\mathcal{T}}(E)=\{(|x_i-x_j|)_{ (v_i,v_j)\in \mathcal{E} }\colon  x_0,x_1,x_2,\dots ,x_n \,\text{ are distinct points in }E\}\subset \R_{+}^{| \mathcal{E}|}=\R_{+}^n.$$
\end{defn}

When enumerating the coordinates of the vector $(|x_i-x_j|)_{ (v_i,v_j)\in \mathcal{E} }$, we implicitly assume we are using the enumeration of $\mathcal{E}$ described before in (\ref{edgeenumeration}).

We will also be interested in pinned versions of $\Delta_{\mathcal{T}}(E)$. Suppose by relabeling the vertices of $\mathcal{T}$ if necessary, that $v_0$ is the location in the tree $\mathcal{T}$ where we want to have the pin. We then define for $x_0\in E$ 
\begin{equation}\label{def pin tree}
    \Delta_{\mathcal{T},v_0}^{x_0}(E):=\{(|x_i-x_j|)_{ (v_i,v_j)\in \mathcal{E} }\colon x_1,x_2,\dots ,x_n \,\text{ are distinct points in }E\}\subset \R^{n}_{+}
\end{equation}
which represents the \textit{edge-length set of the tree $\mathcal{T}$ generated by $E$ with $v_0$-pin at $x_0\in E$}.

\begin{ex}\label{example tree}

Consider the tree $\mathcal{T}=(\mathcal{V},\mathcal{E})$, with $\mathcal{V}=\{v_0,v_1,v_2,v_3,v_4\}$ and  $$\mathcal{E}=\{(v_0,v_1),(v_0,v_2),(v_0,v_4),(v_2,v_3)\}$$ illustrated below. When we pin the tree at $v_0$ and consider the edge-lengths of $\mathcal{T}$-trees in $E$ pinned at a point $x_0\in E$ we get $$\Delta_{\mathcal{T},v_0}^{x_0}(E)=\{(|x_0-x_1|,|x_0-x_2|,|x_0-x_4|,|x_2-x_3|)\colon x_1,x_2,x_3,x_4\in E\}.$$

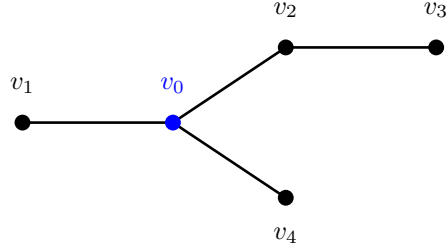
\begin{figure}[h]\label{figure1}
\begin{center}
         \scalebox{1}{
\begin{tikzpicture}

\fill (0,0) circle (3pt);

\fill (3.5,1) circle (3pt);
\fill (3.5,-1) circle (3pt);
\fill (5.5,1) circle (3pt);

\draw[line width=1pt] (0,0)--(2,0)--(3.5,1)--(5.5,1);
\draw[line width=1pt] (2,0)--(3.5,-1);

\draw (0,0.5) node {$v_1$};
\draw[blue] (2,0.5) node {$v_0$};
\draw (3.5,1.5) node {$v_2$};
\draw (3.5,-1.5) node {$v_4$};
\draw (5.5,1.5) node {$v_3$};

\fill[blue] (2,0) circle (3pt);
\end{tikzpicture}}
\end{center}
\caption{A sample tree, pinned at $v_0$.}
\end{figure}
    
\end{ex}
\subsection{Pinned result for trees}
Let $\mathcal{T}=(\mathcal{V},\mathcal{E})$ be a tree graph and let $v_0$ be a distinguished vertex in $\mathcal{T}$. Denote by $m\geq 1$ the multiplicity of the vertex $v_0$. The subgraph induced by $\mathcal{V}\setminus\{v_0\}$ has $m$ many connected components, which we will enumerate and call $H_1'=(\mathcal{V}_1',\mathcal{E}_1'),\ldots, H_m'=(\mathcal{V}_m',\mathcal{E}_m')$, where $m=1$ if and only if $v_0$ is a leaf of the tree. For each $l=1,\ldots,m$, let $v_{l,1}\in H_l'$ be the unique vertex such that $(v_{l,1},v_0)\in \mathcal{E}$ (and we will write all its other vertices of $H_{l}'$ in the form $v_{l,j}$). Finally let $H_l=(\mathcal{V}_l'\cup\{v_0\}, \mathcal{E}_l'\cup\{(v_0,v_{l,1})\})=:(\mathcal{V}_l,\mathcal{E}_l)$, i.e. $H_l$ is the subgraph of $\mathcal{T}$ induced by $H_l'\cup\{v_0\}$. See figure \ref{fig tree decomp} for an illustration of such decomposition for the tree in Example \ref{example tree}.

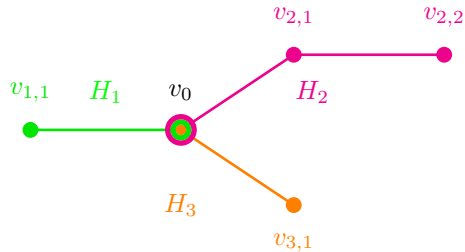
\begin{figure}[h]\label{fig tree decomp}
\begin{center}
         \scalebox{1}{
\begin{tikzpicture}
\draw[line width=1pt, color=black!10!green] (0,0)--(2,0);
\draw[line width=1pt, magenta] (2,0)--(3.5,1)--(5.5,1);
\draw[line width=1pt,orange] (2,0)--(3.5,-1);

\draw[black!10!green] (0,0.5) node {$v_{1,1}$};
\draw (2,0.5) node {$v_0$};
\draw[magenta] (3.5,1.5) node {$v_{2,1}$};
\draw[orange] (3.5,-1.5) node {$v_{3,1}$};
\draw[magenta] (5.5,1.5) node {$v_{2,2}$};
\fill[black!10!green] (0,0) circle (3pt);
\fill[magenta] (3.5,1) circle (3pt);
\fill[orange] (3.5,-1) circle (3pt);
\fill[magenta] (5.5,1) circle (3pt);
\fill[orange] (2,0) circle (2pt);
\draw[color=black!10!green,line width=2pt] (2,0) circle (3pt);
\draw[magenta,line width=2pt] (2,0) circle (5pt);

\draw[magenta] (3.75,0.5) node {$H_2$};
\draw[green](1,0.5) node {$H_1$};
\draw[orange](2,-1) node {$H_3$};

\end{tikzpicture}}
\end{center}
\caption{Decomposition in subtrees for tree in Example \ref{example tree}.}
\end{figure}

Fix a compact set $E\subset \R^d$ and $x_0\in E$. For our applications, we will want to divide $E$ into separated sets $E_1,E_2,\ldots,E_k$ as we did in Section \ref{sec:toycases}, but we will abuse notation by calling all these sets $E$. We want to first define graph maps which generalize the $\Phi^x$ maps from before. We recall the graph map $\Phi_{C_k}^{x_0}:\prod_{j=1}^k E\rightarrow\R^k$ for a $k$-chain $C_k$ pinned at a leaf $x_0$:
\[
\Phi_{C_k}^{x_0}(x_1,\ldots,x_k):=(|x_0-x_1|,|x_1-x_2|,\ldots,|x_{k-1}-x_k|).
\]

Let us define the graph map for a general $n$-tree $\mathcal{T}^n=(\mathcal{V}^{n+1},\mathcal{E})$ pinned at $v_0$ by using induction on the number of edges $n$. For $n=1$, the map $\Phi^{x_0}_{\mathcal{T}^1}:E\rightarrow \R_{+}$ is simply  $\Phi^{x_0}_{\mathcal{T}^{1}}(x_1)=|x_0-x_1|,\,x_1\in E$. Let $n\geq 2$. Assume we have it defined the graph map for any $j$-tree $\mathcal{T}^{j}$ with at most $j\leq n-1$ edges.

Consider an $n$-tree $\mathcal{T}^{n}=(\mathcal{V}^{n+1},\mathcal{E}^n)$ with distinguished vertex $v_0$ as above and let $m$ and the $H_l$'s be as above as well. Note that $|\mathcal{V}|=n+1$ and denoting $n_l=|\mathcal{V}_l'|$ one has that $\sum_{l=1}^m n_l=n$.

If $m\ge 2$ then we define its graph map $\Phi^{x_0}_{\mathcal{T}^n,v_0}:\prod_{i=1}^n E\rightarrow \R^{n}_{+}$ as
\begin{align*}
\Phi_{\mathcal{T}^{n},v_0}^{x_0}&(x_{1,1},x_{1,2},\ldots,x_{1,n_1},x_{2,1},\ldots,x_{2,n_2},\ldots,x_{m,1},\ldots,x_{m,n_m})\\
=&\bigg(\Phi_{H_1, {v_0}}^{x_{0}}(x_{1,1},\ldots,x_{1,n_1}),
 \Phi_{H_2,{v_0}}^{x_0}(x_{2,1},\ldots,x_{2,n_2}),\ldots,\Phi_{H_m,{v_0}}^{x_0}(x_{m,1},\ldots,x_{m,n_m})\bigg)
\end{align*}

If $m=1$, then we let $H'$ be the subgraph induced by deleting $v_0$ and let $v_1$ be the unique vertex in $\mathcal{T}$ such that $(v_0,v_1)$ is an edge. In this case, we define its graph map as
\[
\Phi_{\mathcal{T}^{n},v_0}^{x_0}(x_1,x_2,\ldots,x_n)=(|x_0-x_1|,\Phi_{H',{v_1}}^{x_1}(x_2,\ldots,x_n))
\]

That finishes the inductive definition of the graph map. 

Let $\nu_{\mathcal{T},v_0}^{x_0}=(\Phi_{\mathcal{T},v_0}^{x_0})_{*}(\prod_{j=1}^{n} \mu)$ be the pushforward of the measure $\prod_{j=1}^{n} \mu$ by the map $\Phi^{x_0}_{\mathcal{T},v_0}$. Note that $\nu^{x_0}_{\mathcal{T},v_0}$ is a probability measure supported in $\Delta_{\mathcal{T}^n,v_0}^{x_0}(E)$.

We want to express the density $\nu_{\mathcal{T},v_0}^{x_0}$ in terms of the densities $\nu_{H_l,v_{0}}^{x_0}$ (case $m\geq 2$) and terms of $\nu_{H',v_1}^{x_1}$ (case $m=1$), which is accomplished by the following lemma.

\begin{lem}\label{lem split of densities}
    Denote $\mathbf{t}=(\mathbf{t}_1,\mathbf{t}_2,\dots ,\mathbf{t}_m)$ and $\mathbf{t}_l=(t_{l,1},\dots ,t_{l,n_l})$. If $m\geq 2$ then one has
$$\nu^{x_0}_{\mathcal{T},v_0}(\mathbf{t})=\prod_{l=1}^{m} \nu^{x_0}_{H_l,v_0}(\mathbf{t_l}).$$
If $m=1$, then 
$$\nu_{\mathcal{T},{v_0}}^{x_0}(t_1,\dots ,t_n)=A_{t_1}(\mu(\cdot) \nu_{H',v_1}^{(\cdot)}(t_2,t_3,\dots, t_n))(x_0).$$
\end{lem}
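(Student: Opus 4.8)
The plan is to prove Lemma~\ref{lem split of densities} by unwinding the inductive definition of the graph map $\Phi^{x_0}_{\mathcal{T},v_0}$ and computing the pushforward measure by testing against an arbitrary $\varphi\in C^\infty(\R^n)$, exactly as was done in the model computation \eqref{density2chainpinnedatleaf}. Throughout I would work with the mollified measures $\mu^\delta$ (so that all pushforwards are genuine functions and Fubini is unproblematic) and suppress the $\delta$'s, just as in the rest of the paper; the identity of densities then passes to the limit. I would also silently drop the smooth Jacobian factors $t^{d-1}$ coming from polar coordinates, since the statement is written for the already-renormalized densities $\nu$ (consistent with the convention established after \eqref{density2chainpinnedatleaf}).

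\textbf{Case $m\ge 2$.} Here the variables split as $(x_{1,1},\dots,x_{1,n_1},\dots,x_{m,1},\dots,x_{m,n_m})$ and the map $\Phi^{x_0}_{\mathcal{T},v_0}$ applies $\Phi^{x_0}_{H_l,v_0}$ to the $l$-th block separately, with all blocks sharing only the fixed pin $x_0$. Testing the pushforward of $\prod_{l=1}^m\prod_{j=1}^{n_l}\mu(x_{l,j})$ against a tensor test function $\varphi(\mathbf t)=\prod_{l=1}^m\varphi_l(\mathbf t_l)$ factors the integral into $\prod_{l=1}^m \int \varphi_l(\mathbf t_l)\,d\nu^{x_0}_{H_l,v_0}(\mathbf t_l)$ by Fubini, since the $l$-th factor depends only on the $l$-th block of $x$-variables. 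Since tensor products are dense (in $C^\infty$, or enough to determine a measure), this gives $\nu^{x_0}_{\mathcal T,v_0}=\prod_l \nu^{x_0}_{H_l,v_0}$ as claimed. This step is essentially bookkeeping.

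\textbf{Case $m=1$.} Now $\Phi^{x_0}_{\mathcal T,v_0}(x_1,\dots,x_n)=(|x_0-x_1|,\Phi^{x_1}_{H',v_1}(x_2,\dots,x_n))$, so I would test against $\varphi(t_1,t_2,\dots,t_n)$, first integrate in $x_2,\dots,x_n$ holding $x_1$ fixed: by the inductive definition of $\nu^{x_1}_{H',v_1}$ this inner integral equals $\int \varphi(|x_0-x_1|,t_2,\dots,t_n)\,d\nu^{x_1}_{H',v_1}(t_2,\dots,t_n)$, a function $\Psi(x_1)$ of $x_1$ alone. Then $\int \Psi(x_1)\,d\mu(x_1) = \int \Psi(x_1)\,d\mu(x_1)$, and making the substitution $y=x_1-x_0$ followed by polar coordinates $y=t_1\omega$, $\omega\in S^{d-1}$, turns this into $\int\!\!\int \varphi(t_1,\dots,t_n)\,[A_{t_1}(\mu(\cdot)\,\nu^{(\cdot)}_{H',v_1}(t_2,\dots,t_n))(x_0)]\,dt_1\cdots$, which is precisely the claimed formula $\nu^{x_0}_{\mathcal T,v_0}(t_1,\dots,t_n)=A_{t_1}(\mu(\cdot)\nu^{(\cdot)}_{H',v_1}(t_2,\dots,t_n))(x_0)$ after discarding the $t_1^{d-1}$ factor.

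The only genuinely delicate point — and the one I would flag as the main obstacle — is the $m=1$ case, where one must make sense of $\nu^{x_1}_{H',v_1}$ as a function of the ``pin'' $x_1$ (it is a distribution in the $t$-variables for each $x_1$) and justify integrating it against $d\mu(x_1)$ before undoing the polar-coordinate change. At the mollified level this is unproblematic: $\nu^{x_1}_{H',v_1}$ is a smooth function jointly in $(x_1,\mathbf t)$ with support controlled uniformly (all distances lie in $[1,2]$), so Fubini and the substitution are fully justified, and only afterward does one let $\delta\to 0$, using the weak convergence established earlier in Section~\ref{sec:maindisttheorem}. I would carry out the induction on the number of edges $n$: the base case $n=1$ is the definition $\nu^{x_0}_{\mathcal T^1,v_0}(t_1)=A_{t_1}(\mu)(x_0)$ (the single-distance pushforward computation), and in the inductive step either the $m\ge2$ splitting reduces to the $H_l$, each with fewer edges, or the $m=1$ formula reduces to $H'$, which has $n-1$ edges and pin $v_1$ — so the inductive hypothesis applies and closes the argument.
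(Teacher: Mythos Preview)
Your proposal is correct and, in fact, somewhat cleaner than the paper's own argument. For $m\ge 2$ both you and the paper exploit the block structure of $\Phi^{x_0}_{\mathcal T,v_0}$, but where the paper writes out the full coordinate change $\mathbf y=\Psi(\mathbf x)$, computes its (block-triangular) Jacobian, and passes to polar coordinates in every variable before identifying each block with $\nu^{x_0}_{H_l,v_0}$, you short-circuit this by testing against tensor products and invoking Fubini to factor the pushforward of a product measure under a block map directly. The saving is modest here.

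The real difference is in the $m=1$ case. The paper introduces an explicit lower-triangular edge-incidence matrix $M_H$, computes $M_H^{-1}$ by row reduction to verify that $M_H^{-1}(x_0 e_1)=x_0\mathbf 1$ (so that after the change of variables every argument of every $\mu$ picks up an $x_0+y_1$ summand), and only then passes to polar coordinates. You instead integrate out $x_2,\dots,x_n$ first with $x_1$ held fixed, recognizing that integral immediately as (testing against) the pushforward $\nu^{x_1}_{H',v_1}$ by the very definition of the graph map, and then do polar coordinates in the single remaining variable $x_1-x_0$. This avoids the matrix bookkeeping entirely and makes transparent why the recursive definition of $\Phi^{x_0}_{\mathcal T,v_0}$ was set up the way it was. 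The paper's approach has the minor advantage of giving an explicit closed formula for the density in one shot (useful if one wanted to iterate by hand rather than recurse), but for the purposes of the induction in Proposition~\ref{lem tree induction} your formulation is exactly what is needed.

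One small remark: your closing paragraph frames the argument as an induction on $n$, but as you have written it the proof of the lemma itself requires no inductive hypothesis---both formulas follow directly from the (recursive) definition of $\Phi^{x_0}_{\mathcal T,v_0}$ together with the definition of pushforward, at the mollified level. The induction is only in the definition of the graph map, not in the proof of the density identity.
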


\begin{proof}[Proof of Lemma \ref{lem split of densities}]

Let $\phi$ be a smooth test function. We recall that each measure $\mu$ has been mollified, though we suppress this from the notation. Then we have
\begin{align}
    F(\phi):=&\int \phi(t_1,\ldots,t_n)\nu^{x_0}_{\mathcal{T},v_0}(t_1,\ldots,t_n) dt_1\dots dt_n\\
    = &\int \phi(\Phi_{\mathcal{T},v_0}^{x_0}(x_{1,1},\ldots,x_{m,n_m}))\prod_{l=1}^m\prod_{j=1}^{n_l}\mu(x_{l,j})\,d\mathbf{x}\\
=& \int \phi(\Phi_{\mathcal{T},v_0}^{x_0}(\mathbf{x}_{1},\ldots,\mathbf{x}_{m}))\prod_{l=1}^m(\prod_{j=1}^{n_l}\mu)(\mathbf{x}_{l})\,d\mathbf{x}
\end{align}
where $\mathbf{x}_l=(x_{l,1},x_{l,2},\dots ,x_{l,n_l})$.
By construction every one of the $n$ components of the mapping $\Phi_{\mathcal{T},v_0}^{x_0}$ is of the form $|x_0-x_{l,1}|$ or $|x_{l,j}-x_{l,k}|$ where $(v_{l,j},v_{l,k})\in \mathcal{E}_l'$. We change variables $y_{l,1}=x_0-x_{l,1}$ and $y_{l,j,k}=x_{l,j}-x_{l,k}$ whenever $(x_{l,j},x_{l,k})\in \mathcal{E}_l'$. For each $l$, since $|\mathcal{E}'_l|=n_l-1$ we can relabel the variables $\{y_{l,j,k}\colon (v_{l,j},v_{l,k})\in \mathcal{E}_{l}' \}$ as a $\{y_{l,p}\colon 2\leq p\leq n_l\}$ and let $(y_{l,1}\cdots y_{l,n_l})=\Psi_l(x_{l,1},\cdots , x_{l,n_l})$, or in shorter notation $\mathbf{y}_l=\Psi_l(\mathbf{x}_l)$. The coordinate change map is then
$$\mathbf{y}=(\mathbf{y}_1,\dots,\mathbf{y}_m)=(\Psi_1(\mathbf{x}_1),\dots, \Psi_m(\mathbf{x}_m))=:\Psi(\mathbf{x}).$$

Additionally, notice that we have
\[
\Phi_{\mathcal{T},v_0}^{x_0}(\mathbf{x}_{1},\ldots,\mathbf{x}_{m})=\Big(\Phi_{H_1,v_0}^{x_0}(\mathbf{x}_{1}),\Phi_{H_2,v_0}^{x_0}(\mathbf{x}_{2}),\ldots,\Phi_{H_m,v_0}^{x_0}(\mathbf{x}_{m})\Big)
\]
by construction. In the new coordinate system, the argument of $\phi$ is very simple:
\begin{equation}
\Phi_{\mathcal{T},v_0}^{x_0}(\Psi^{-1}(\mathbf{y}))=(|\mathbf{y_1}|,|\mathbf{y_2}|,\ldots,|\mathbf{y_m}|),
\end{equation}
where we are abusing notation to mean $|\mathbf{y_l}|=(|y_{l,1}|,\ldots,|y_{l,n_l}|).$  In fact, we can even notice that the affine map $\Psi^{-1}$ can be represented with its linear part in a block matrix form, where the $l$th block is exactly the linear part of the map $\Psi_l^{-1}$ arising as a coordinate change $\mathbf{x}_l=\Psi_l^{-1}(\mathbf{y}_l)$ when computing $\nu_{H_l,v_0}^{x_0}$; we leave this as an exercise for the reader to verify. 

The Jacobian determinant of the change of variables splits $$J(\mathbf{y}):=|\det(\Psi^{-1})(\mathbf{y})|=\prod_{l=1}^{m} |\det(\Psi_l^{-1})(\mathbf{y}_l)|=:\prod_{l=1}^{m}J_l(\mathbf{y}_l).$$ Moreover each $\Psi_l^{-1}$ is affine with a linear part being a triangular matrix with 1's in the diagonal (so determinant 1), which gives $J_l\equiv1$. So after the change of variables one has

$$ F(\phi)=\int \phi(|\mathbf{y}_1|,\dots ,|\mathbf{y}_m|)\prod_{l=1}^m\left[(\prod_{j=1}^{n_l}\mu)(\Psi_l^{-1}(\mathbf{y}_l))\,\right]d\mathbf{y}$$

We then change each of the new variables $y_{l,j}$ in polar coordinates $y_{l,j}=t_{l,j}\omega_{l,j}$ for $1\leq j\leq n_l$, where the $\omega_{l,j}$'s are unit vectors. Thus, we can split the spherical integrals into an $m$-fold product, i.e.

$$F(\phi)=\int_{\mathbf{t}}\phi(\mathbf{t})\prod_{l=1}^m (t_{l,1}\dots t_{l,n_l})^{d-1} \left(\int_{(S^{d-1})^{n_l}} (\prod_{j=1}^{n_l} \mu)(\Psi_l^{-1}(t_{l,1}\omega_{l,1},\dots t_{l,n_l}\omega_{l,n_l}))\,d\sigma^{n_l}(\mathbf{\omega_l})\right)\,d\mathbf{t}.$$

Notice that because of the block structure of the coordinate change, the $l$th term in the product above is the density corresponding to the subtree $H_l$
since that would be the expression obtained for $\nu_{\mathcal{T},v_0}^{x_0}$ if this procedure was performed for $H_l$ instead of $\mathcal{T}$, so we can say that for each $l=1,\ldots,m$
\[
(t_{l,1}\dots t_{l,n_l})^{d-1}\int_{(S^{d-1})^{n_l}}\prod_{j=1}^{n_l} \mu(\Psi_l^{-1}(t_{l,1}\omega_{l,1},\dots t_{l,n_l}\omega_{l,n_l}))\,d\sigma^{n_l}(\mathbf{\omega_l})=\nu_{H_l,v_0}^{x_0}(\mathbf{t}_l)
\]
We will also alternatively denote $\nu_{H_l,v_0}^{x_0}(\mathbf{t}_l)=:A^{(H_l,v_0)}_{t_{l,1},\ldots,t_{l,n_l}}(\mu,\ldots,\mu)(x_0)$.
Thus, we have that
\[
\nu_{\mathcal{T},v_0}^x(\mathbf{t})=\prod_{l=1}^m \nu_{H_l,v_0}^x(\mathbf{t_l})=\prod_{l=1}^m A^{(H_l,v_0)}_{t_{l,1},\ldots,t_{l,n_l}}(\mu,\ldots,\mu)(x).
\]

We now turn to the case where $m=1$. Consider the tree $H'$ obtained by deleting $v_0$ from $H:=\mathcal{T}$. Put an orientation on the edges of $H'$ via the rule that an edge's direction goes from $v_i$ to $v_j$ if and only if $v_j$ is farther from $v_0$ than $v_i$ is (where distance is measured in the sense of the walk with the fewest edges). Form a matrix $M_{H'}$ whose $(i,j)$th entry is
\[
(M_{H'})_{i,j}=\delta_{ij}-\text{Edge}(j,i),
\]
where $\text{Edge}(j,i)$ is $1$ if there is a directed edge from $v_j$ to $v_i$ and $0$ otherwise. By relabeling the vertices if needed, we may assume $M_{H'}$ is a lower triangular matrix with 1s along the diagonal, one entry with value $-1$ in each row (except the first row), and all other entries are 0. If we let $\text{Abs}$ denote the map which replaces each entry of a vector with its absolute value, then by construction, $M_{H'}$ induces the graph map in the sense that
\[
\Phi_{H',v_1}^{x_1}(x_2,\ldots,x_n)=\text{Abs}(M_{H'}(x_2,\ldots,x_n)-\sum_{j\in\mathcal{J}}x_1e_j),
\]
where $\mathcal{J}$ is the set of all indices $j$ of vertices for which there is an edge from $v_1$ to $v_j$. Similarly, we have that
\[
\Phi_{H,v_0}^{x_0}(x_1,\ldots,x_n)=\text{Abs}(M_{H}(x_1,\ldots,x_n)-x_0e_1).
\]
By construction (and using the fact that $v_0$ is a leaf), we have that $M_H$ takes the form
\[
\begin{bmatrix}
    1 & 0 & \cdots & 0 \\
    -1 & M_{H',(1,1)} & \cdots & M_{H',(1,n-1)} \\
    \vdots & \vdots & \ddots & \vdots \\
    0 & M_{H',(n-1,1)} & \cdots & M_{H',(n-1,n-1)}
\end{bmatrix}
\]
We consider how to invert this matrix using row operations. First, the first row can be added to the second row (and no other rows, since $v_0$ is a leaf). Then, the second row can be added to all rows below it which have a $-1$ entry in the second column, and the procedure is iterated. Thus, the inverse takes the form
\[
\begin{bmatrix}
    1 & 0 & \cdots & 0 \\
    1 & M_{H',(1,1)}^{-1} & \cdots & M_{H',(1,n-1)}^{-1} \\
    \vdots & \vdots & \ddots & \vdots \\
    1 & M_{H',(n-1,1)}^{-1} & \cdots & M_{H',(n-1,n-1)}^{-1}
\end{bmatrix}
\]
In particular, notice that $M_H^{-1}(x_0e_1)=x_0\mathbf{1}$, where $\mathbf{1}$ is the vector whose entries are all equal to 1. With this established, we return to computing the density. Changing variables via $\mathbf{y}=M_H(\mathbf{x})-x_0e_1$ (we also write $\mathbf{y}=(y_1,\mathbf{y'})$ for convenience and let $t_j=|y_j|$) leads to the formula
\[
F(\phi)=\int_{\mathbf{y}}  \phi(\mathbf{t})\mu(y_1+x_0)(\prod_{j= 2}^n \mu)(M_{H}^{-1}(\mathbf{y})+x_0\mathbf{1})\,d\mathbf{y}.
\]
The form of the inverse $M_H^{-1}$ guarantees that a $x_0+y_1$ summand appears in each argument of a $\mu$ factor. Going to polar coordinates $y_i=t_i\omega_i$, $1\leq i\leq n$, we deduce immediately the desired form
\[
\nu_{\mathcal{T},{v_0}}^{x_0}(\mathbf{t})=A_{t_1}(\mu(\cdot) \nu_{H',v_1}^{(\cdot)}(t_2,t_3,\dots, t_n))(x_0).
\]
\end{proof}
Note that for the tree given in Example \ref{figure1}, by using 
$$\Phi^{x_0}(x_1,x_2,x_3,x_4)=(|x_1-x_0|, |x_2-x_0|,|x_3-x_2|,|x_4-x_0|)$$
we can explicitly compute
\[
\nu^x_{\mathcal{T},v_0}(t_1,t_2,t_3,t_4)=A_{t_1}(\mu)(x)A_{t_2}(\mu A_{t_3}(\mu))(x)A_{t_4}(\mu)(x),
\]
which is a product of 1-chain operators (coming from $v_1$ and $v_4$) and a 2-chain operator pinned at $v_0$, coming from the subtree spanned by $v_0,v_2,$ and $v_3$.

We are finally ready to state our main technical result, which will imply the pinned nonempty interior result for trees. Recall the definition of $(1-\epsilon)$-restriction of a measure in Subsection \ref{subsection: restrictmeasures}. In the proposition below we will use $A_{\mathbf{t}}^{(\mathcal{T}^j,v_0)}(\mu,\mu,\dots,\mu)(x)$ 
 as an alternative notation for $\nu_{\mathcal{T}^j,v_0}^{x}(\mathbf{t})$, which is supported in $\Delta_{\mathcal{T}^j,v_0}^{x}(E)$.

\begin{prop}\label{lem tree induction}
    Let $d\ge 2$ and fix a Borel set $E\subset \R^d$. Assume that the local smoothing estimate \eqref{partiallocalsmoothing} holds for some $p\in (2,\infty)$ and $\eta>0$, and that $\Hdim(E)>d-\eta$. For every $n\in \N$ and every $\epsilon>0$, there exists $\gamma>\frac{1}{p}$ and a sequence of Frostman probability measures $\{\mu_1,\ldots,\mu_n\}$ supported in $E$ satisfying $\mu_n$ is a $(1-\epsilon)$-restriction of $\mu_1$ and that $\mu_{i'}$ is a restriction of $\mu_i$ whenever $i'\ge i$, such that for every tree $\mathcal{T}^k$ with $k+1$ vertices with a distinguished vertex $v_0$, where $k\le n$, and such that for every $\tau>0$ and every $\tau$-restriction measure $\nu_n$ of $\mu_n$, we have that
    \[
    \Vert\partial_{t_1, \dots, t_k}^{\gamma} A_{t_1,\dots, t_k}^{(\mathcal{T}^k,v_0)}(\nu_n,\ldots,\nu_n)\Vert_{L^{p}(d\nu_n\,dt_1\dots dt_k)}\le C_{n,\epsilon,\tau,d}
    \]
    where $\partial_{t_1, \dots, t_k}^{\gamma}$ is a shortcut notation for $\partial_{t_1}^{\gamma}\partial_{t_2}^{\gamma}\dots \partial_{t_k}^{\gamma}$.
  
\end{prop}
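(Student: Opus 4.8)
The proof proceeds by strong induction on $k$, the number of edges in the tree $\mathcal{T}^k$. The base case $k=1$ is exactly the content of the reduction \eqref{reduction2} in the proof of Theorem \ref{Thm:pinnednonemptyassumingLS}: for a single edge, $A^{(\mathcal{T}^1,v_0)}_{t}(\nu_n)(x)=A_t(\nu_n)(x)$ (up to a smooth factor), and the estimate $\Vert\partial_t^\gamma A_t(\nu_n)\Vert_{L^p(d\nu_n\,dt)}\lesssim 1$ with $\gamma=\frac1p+\varepsilon$ holds for any restriction $\nu_n$ of a Frostman measure, as noted in the remark at the end of Section \ref{sec:maindisttheorem} (the implicit constant only picks up a factor $\mu(A)^{-1}$). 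For the inductive step, fix $k$, assume the statement for all trees with at most $k-1$ edges, and fix a tree $\mathcal{T}^k$ pinned at $v_0$ of multiplicity $m$, with the decomposition $\mathcal{T}^k=\bigcup_{l=1}^m H_l$ and $\nu^x_{\mathcal{T}^k,v_0}$ expressed via Lemma \ref{lem split of densities}. I will split into the two cases $m\ge 2$ and $m=1$, mirroring the star-graph and chain toy cases respectively.

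\textbf{Case $m\ge 2$ (pinned vertex has degree $\ge 2$).} Here $\nu^x_{\mathcal{T}^k,v_0}(\mathbf t)=\prod_{l=1}^m \nu^x_{H_l,v_0}(\mathbf t_l)$, and each $H_l$ has $n_l\le k-1$ edges. The coordinate $t_{j,i}$ we differentiate in lives in exactly one block, say $\mathbf t_{l_0}$. Following the star-graph toy case, I first apply the inductive hypothesis to each $H_l$ to produce, for each $l$, a function $h_l(x):=C\Vert\partial_{t_{l,1}}^\gamma\nu^x_{H_l,v_0}\Vert_{L^p(dt_{l,1})}\in L^p(d\mu)$ dominating $\sup_{\mathbf t_l}|\nu^x_{H_l,v_0}(\mathbf t_l)|$ via Sobolev embedding (Theorem \ref{tool: embeddingthm}). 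One needs the sup over the whole multi-variable $\mathbf t_l$, which requires iterating the one-variable embedding in each coordinate of $\mathbf t_l$ in turn — here the inductive hypothesis must be invoked not just for one $t_{l,i}$ but for all of them, which is precisely why the statement is phrased uniformly over all edge-variables. I then pass to a subset $\tilde E\subset E$ with $\mu(\tilde E)\ge 1-\epsilon$ on which $\sum_l h_l\le N$, define $\tilde\mu_n$ as the normalized restriction, and observe that $|\prod_{l\ne l_0}\nu^x_{H_l,v_0}|\le N^{m-1}$ pointwise on $\tilde E$. Hence $\Vert\partial_{t_{j,i}}^\gamma\nu^x_{\mathcal{T}^k,v_0}\Vert_{L^p(d\nu_n\,dt_{j,i})}\le N^{m-1}\Vert\partial_{t_{j,i}}^\gamma\nu^x_{H_{l_0},v_0}\Vert_{L^p(d\nu_n\,dt_{j,i})}$ for any restriction $\nu_n$ of $\tilde\mu_n$, and the last factor is bounded by the inductive hypothesis applied to $H_{l_0}$.

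\textbf{Case $m=1$ (pinned vertex is a leaf).} Now $\nu^{x_0}_{\mathcal{T}^k,v_0}(t_1,\dots,t_n)=A_{t_1}\big(\mu(\cdot)\,\nu^{(\cdot)}_{H',v_1}(t_2,\dots,t_n)\big)(x_0)$, where $H'$ is a tree with $k-1$ edges pinned at $v_1$. Two subcases arise depending on whether $j=1$ (we differentiate the edge $(v_0,v_1)$) or $j\ge 2$. For $j\ge 2$, I write $\partial_{t_j}^\gamma\nu^{x_0}_{\mathcal{T}^k,v_0}=A_{t_1}\big(\mu(\cdot)\,\partial_{t_j}^\gamma\nu^{(\cdot)}_{H',v_1}\big)(x_0)$, dualize against $g\in L^{p'}(d\nu_n\,dt_j)$, decompose into Littlewood--Paley pieces $P_s$, integrate in $y$ first to eliminate $A_{t_1}$ using Lemma \ref{decayforpieceofAt} (gaining $2^{-s(d-1)/p}$), strip $P_s$ using Lemma \ref{lem proj of product} (losing $2^{s(d-s_E+\varepsilon)/p'}$), control the dual factor by \eqref{factiks} (losing $2^{s(d-s_E+\varepsilon)/p}$), and are left with $\Vert\partial_{t_j}^\gamma\nu^{x_1}_{H',v_1}\Vert_{L^p(d\mu(x_1)\,dt_j)}\le C$ by the inductive hypothesis applied to $H'$; the resulting geometric series in $s$ converges precisely because $\eta\le\frac{d-1}{p}$ and $s_E>d-\eta$. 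For $j=1$, I instead write $\partial_{t_1}^\gamma\nu^{x_0}_{\mathcal{T}^k,v_0}=\partial_{t_1}^\gamma A_{t_1}\big(\mu(\cdot)\,\nu^{(\cdot)}_{H',v_1}\big)(x_0)$, dualize, decompose, and apply the local smoothing estimate \eqref{partiallocalsmoothing} to $\partial_{t_1}^{1/p+\varepsilon}A_{t_1}$ (gaining $2^{s(-\eta+2\varepsilon)}$ on $\Vert P_s(\mu\,\nu^{(\cdot)}_{H',v_1})\Vert_{L^p}$), then use Lemma \ref{lem proj of product} plus the multivariable Sobolev embedding to bound $\Vert P_s(\mu\,\nu^{(\cdot)}_{H',v_1})\Vert_{L^p(dy)}\lesssim 2^{s(d-s_E+\varepsilon)/p'}\Vert\sup_{\mathbf t'}\nu^{x_1}_{H',v_1}\Vert_{L^p(d\mu(x_1))}\lesssim 2^{s(d-s_E+\varepsilon)/p'}\max_{i}\Vert\partial_{t_i}^\gamma\nu^{x_1}_{H',v_1}\Vert_{L^p(d\mu\,dt_i)}$, which is $\le C$ by the inductive hypothesis. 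Again the series converges under $\eta\le\frac{d-1}{p}$. In both cases, after choosing the subset $\tilde E$ as above (here taking $\mu_n$ so that at each level of the recursion we only lose a dyadically summable amount of mass, ending with a $(1-\epsilon)$-restriction $\mu_n$ of $\mu_1$), the bounds are uniform over restrictions $\nu_n$ of $\mu_n$ up to a harmless factor $\nu_n$-mass$^{-1}$ absorbed into $\tau$.

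\textbf{Main obstacle.} The delicate point is \emph{bookkeeping of the measures and the restrictions across the recursion}: each application of the inductive hypothesis both demands a Frostman measure and produces a proper restriction of it (to the set where the dominating function $h$ is bounded), and these restrictions are nested as one descends the tree. One must organize the epsilons so that the total mass lost over the at most $n$ levels of recursion is still $\le\epsilon$ — this is exactly why the proposition is stated with the nested family $\mu_1\supset\cdots\supset\mu_n$ and the flexibility of allowing an arbitrary $\tau$-restriction $\nu_n$ at the end. The other subtle point, already flagged in the toy cases, is that the Sobolev embedding controlling $\sup_{\mathbf t}\nu^x_{H,v_0}(\mathbf t)$ by an $L^p$ norm of a single fractional derivative forces the inductive statement to be \emph{simultaneously} uniform in, and provide derivative bounds for, \emph{every} edge-variable of every subtree; a naive induction controlling only one derivative at a time would fail to close. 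Everything else is a routine combination of duality, Littlewood--Paley decomposition, Lemmas \ref{IL lemma}, \ref{lem proj of product}, \ref{decayforpieceofAt}, \eqref{factiks}, and the local smoothing hypothesis \eqref{partiallocalsmoothing}, exactly as in Sections \ref{sec:maindisttheorem} and \ref{sec:toycases}.
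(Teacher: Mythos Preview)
Your proposal is correct and follows essentially the same route as the paper: strong induction on the tree size, splitting into the product case ($m\ge 2$) and the leaf case ($m=1$) via Lemma \ref{lem split of densities}, with the leaf case further split into the two subcases $j=1$ (handled by local smoothing) and $j\ge 2$ (handled by Lemma \ref{decayforpieceofAt}), and the nested-restriction bookkeeping organized exactly as you describe. The only cosmetic difference is that where you control $\sup_{\mathbf t_l}|\nu^x_{H_l,v_0}|$ by iterating the one-variable Sobolev embedding across coordinates, the paper simply fixes the remaining $t$-variables at their supremum (as a function of $x$) and applies Sobolev in a single variable, invoking the uniformity in the inductive hypothesis; both arguments close.
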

\begin{proof}
    We will proceed by strong induction on $n$: the base case follows by \eqref{reduction1} and the remark at the end of the proof of Theorem \ref{Thm:pinnednonemptyassumingLS} concerning restriction measures, so we will assume that we know the result for all $k\le n-1$. Fix a tree $\mathcal{T}^n$ with $n+1$ vertices and distinguished vertex $v_0$. Let $\mu_{n-1}$ be a Frostman probability measure that works with the parameter $\tau/2$ for all trees with at most $n$ vertices and is a $(1-c_n\epsilon)$-restriction of the $\mu_1$ measure that exists from the base case, where $c_n$ is sufficiently small. For every tree $\mathcal{T}^r$ with at most $r\le n$ vertices and a distinguished vertex $v_0$, we consider the function
    \[
    h_{\mathcal{T}^r,v_0}(x)= \| \partial_{t_1,\dots,t_r }^{\gamma}A_{t_1,\dots,t_r}^{(\mathcal{T}^r,v_0)}(\mu_{n-1},\ldots,\mu_{n-1})(x)\|_{L^p(dt_1\dots dt_r)}. 
    \]

   Thus, since $\mathcal{T}^r$ has at most $n$ vertices, our induction hypothesis guarantees that $ h_{\mathcal{T}^r,v_0}\in L^p(\mu_{n-1})$. We do this for all such pairs of trees and pins $(\mathcal{T}^r,v_0)$ where $r\le n$. Since there are finitely many such pairs we still have
\[h_{r,v_0}(x):=\sum_{(\mathcal{T}^r,v_0)}h_{\mathcal{T}^r,v_0}(x)\in L^p(\mu_{n-1}).\]
     
     Hence, for every $\beta>0$, there exists a set $E_{\beta}\subset E$ such that $\mu_{n-1}(E_{\beta})>1-\beta$ and $|h_{r}(x)|<N_{\beta}$ for all $x\in E_{\beta}$ and some sufficiently large $N_{\beta}$. We can choose $0<\beta_n\ll\epsilon$ so that $E_{n}:=E_{\beta_n}$ is a $(1-\epsilon)$-restriction of $\mu_1$. We choose the restricted probability measure $(\mu_{n-1})_{E_{n}}$ to be $\mu_n$. Notice that by our choice, we will have that for all $(\mathcal{T}^r,v_0)$ with $r\leq n$,
    $$|h_{\mathcal{T}^r,v_0}(x)|\leq N_{\epsilon,n},\,\text{ for all }x\in \text{supp}(\mu_n).$$

     Now, we split into two cases. \\ \\
    \textbf{Case 1: $v_0$ is not a leaf.} In particular, deleting the vertex $v_0$ results in $m\ge 2$ new trees. We will consider the pinned trees $H_1,\cdots,H_m$, each with the pin $v_0$. Recalling the notation $\bm{t}=(t_1,t_2,\dots, t_n)$ and $\bm{t_l}=(t_{l,1}, t_{l,2}, \dots, t_{l,n_l})$. Our goal is to bound
    \begin{equation}
\Vert\partial_{\bm{t}}^{\gamma}\prod_{l=1}^m A^{(H_l,v_0)}_{\bm{t}_l}(\nu_n,\ldots,\nu_n)\Vert_{L^{p}(d\nu_n\,d\bm{t})}\leq C_{n,d,\varepsilon,\tau}.
    \end{equation}
    where $\nu_n$ is any $\tau$-restriction of $\mu_n$. By Hölder's inequality, for each $x\in \supp(\nu_n)\subset E_n$, we bound 
    \begin{align*}
    \Vert\partial_{\bm{t}}^{\gamma}&\prod_{l=1}^m A^{(H_l,v_0)}_{\bm{t}_l}(\nu_n,\ldots,\nu_n)(x)\Vert_{L^{p}(d\bm{t})}\\
\leq&\Vert\partial_{\bm{t_1}}^{\gamma} A^{(H_1,v_0)}_{\bm{t}_{1}}(\nu_n,\ldots,\nu_n)(x)\Vert_{L^{p}(d\bm{t}_{1})}\prod_{l=2}^m\Vert \partial^{\gamma}_{\bm{t}_l}A^{(H_l,v_0)}_{\bm{t}_l}(\nu_n,\ldots,\nu_n)(x)\Vert_{L^{\infty}(d\bm{t}_l)}.
    \end{align*}

    The first factor is bounded by a constant $C_{n-1,\epsilon,\tau,d}$ by the induction hypothesis, where we are using that a $\tau$-restriction of a $(1-\epsilon)$-restriction of $\mu_1$ is also a $\tau/2$-restriction of $\mu_1$. The second factor is bounded by $(N_{\epsilon,n})^{m-1}$, by construction of $\mu_n$, which completes this case. \\ \\
    \textbf{Case 2: $v_0$ is a leaf.} In this case, let $H'$ denote the subtree obtained by deleting $v_0$ from $\mathcal{T}^n$ and let $v_1$ denote the unique vertex such that $(v_0,v_1)$ is an edge in $\mathcal{T}^n$. We have the formula
    \[
    A_{\mathbf{t}}^{(\mathcal{T},v_0)}(\nu_n,\ldots,\nu_n)(x)=A_{t_1}(\nu_n A^{(H',v_1)}_{t_2,\ldots,t_n}(\nu_n,\ldots,\nu_n))(x),
    \]
    where once again, $\nu_n$ is a $\tau$-restriction of $\mu_n$.
    
    By duality, there is a function $g$ with $\Vert g\Vert_{L^{p'}(d\nu_n\,d\bm{t})}=1$, where we also use the notation $g_{\bm{t}}(y)=g(y,t_1,\dots, t_n)$, such that
    \[
    \Vert \partial_{\bm{t}}^{\gamma} A_{\bm{t}}^{(\mathcal{T},v_0)}(\nu_n,\ldots,\nu_n)\Vert_{L^{p}(d\nu_n\,d\bm{t})}=\left|\int \partial_{\bm{t}}^{\gamma} A_{\mathbf{t}}^{(\mathcal{T},v_0)}(\nu_n,\ldots,\nu_n)(y) g(y,\bm{t})\,d\nu_n(y)\,d\bm{t}\right|.
    \]
    We will use the notation $\mathbf{t}=(t_1,\mathbf{t}')$. We now take Littlewood-Paley expansions and estimate
    \begin{align*}
        & \left| \int \partial_{t_1}^{\gamma} A_{t_1} (\nu_n \partial^{\gamma}_{\bm{t}'}A^{(H',v_1)}_{\mathbf{t}'}(\nu_n,\ldots,\nu_n)(y)\,g_{\bm{t}}(y)\,d\nu_n(y)\,dt_1\,d\bm{t}'\right| \\
        \lesssim& \sum_{j=0}^{\infty}\left|\int_{[1,2]^{n-1}}\int_1^2 \int_{\R^d} \partial^{\gamma}_{t_1}A_{t_1}(P_j(\nu_n \partial^{\gamma}_{\bm{t}'}A_{\mathbf{t}'}^{(H',v_1)}(\nu_n,\ldots,\nu_n)))(y)g_{\bm{t}}(y)\,d\nu_n(y)\,dt_1\,d\bm{t}'\right| \\
        \lesssim & \sum_{j=0}^{\infty}\left|\int_{[1,2]^{n-1}}\int_1^2 \int_{\R^d} \partial^{\gamma}_{t_1}A_{t_1}(P_j(\nu_n \partial^{\gamma}_{\bm{t}'} A_{\mathbf{t}'}^{(H',v_1)}(\nu_n,\ldots,\nu_n)))(y) \sum_{i=-2}^2 (P_{j+i} [g_{\bm{t}}d\nu_n])dy\,dt_1\,d\bm{t}'\right| \\
        \lesssim & \sum_{j=0}^{\infty}\left(\Vert \partial^{\gamma}_{t_1}A_{t_1}(P_j(\nu_n \partial^{\gamma}_{\bm{t}'}A_{\mathbf{t}'}^{(H',v_1)}(\nu_n,\ldots,\nu_n)))\Vert_{L^{p}(dy\,dt_1\,d\bm{t}')} \sum_{i=-2}^2\Vert P_{j+i} (g_{\bm{t}}\nu_n)\Vert_{L^{p'}(dy\,dt_1\,d\bm{t}')}\right).
    \end{align*}
    Applying Lemma \ref{lem proj of product} with each $\bm{t}\in [1,2]^n$ fixed, let us bound the final factor via
    \[
\sum_{i=-2}^2 \Vert P_{j+i} (g_{\bm{t}}\nu_n)\Vert_{L^{p'}(dydt_1d\bm{t}')} \lesssim 2^{\frac{j}{p}(d-s_E+\varepsilon)}\|\|g_{\bm{t}}\|_{L^{p'}(d\nu_n)}\|_{L^p(d\bm{t})}=2^{\frac{j}{p}(d-s_E+\varepsilon)}.\]
    For the first factor, we use local smoothing and Lemma \ref{lem proj of product} to estimate for $\gamma=1/p+\varepsilon$ and for each $\bm{t}'\in [1,2]^{n-1}$,
    \begin{align*}
    \Vert \partial^{\gamma}_{t_1}A_{t_1}&(P_j(\nu_n \partial_{\bm{t}'}^{\gamma}A_{\mathbf{t}'}^{(H',v_1)}(\nu_n,\ldots,\nu_n)))\Vert_{L^{p}(dy\,dt_1)} \\
    &\lesssim 2^{j(-\eta(p,d)+2\varepsilon)}\Vert P_j(\nu_n \partial_{\bm{t}'}^{\gamma}A_{\mathbf{t}'}^{(H',v_1)}(\nu_n,\ldots,\nu_n))\Vert_{L^{p}(dy)} \\
    &\lesssim 2^{j(-\eta(p,d)+2\varepsilon)}2^{\frac{j}{p'}(d-s_E+\varepsilon)}\Vert \partial_{\bm{t}'}^{\gamma}A_{\mathbf{t}'}^{(H',v_1)}(\nu_n,\ldots,\nu_n)\Vert_{L^{p}(d\nu_n)}.
    \end{align*}

    Hence,
    \begin{align*}
    \Vert \partial^{\gamma}_{t_1}A_{t_1}&(P_j(\nu_n \partial_{\bm{t}'}^{\gamma}A_{\mathbf{t}'}^{(H',v_1)}(\nu_n,\ldots,\nu_n)))\Vert_{L^{p}(dy\,d\bm{t})} \\
    &\lesssim 2^{j(-\eta(p,d)+2\varepsilon)}2^{\frac{j}{p'}(d-s_E+\varepsilon)}\Vert \partial_{\bm{t}'}^{\gamma}A_{\mathbf{t}'}^{(H',v_1)}(\nu_n,\ldots,\nu_n)\Vert_{L^{p}(d\nu_nd\bm{t})}.
    \end{align*}
    
    The induction hypothesis implies that
    \[
 \Vert \partial_{\bm{t}'}^{\gamma}A_{\mathbf{t}'}^{(H',v_1)}(\nu_n,\ldots,\nu_n)\Vert_{L^{p}(d\nu_nd\bm{t})}\leq C_{n-1,\epsilon,\tau,d}.
    \]
    Substituting all the bounds together leads to
    \[
     \Vert \partial_{\bm{t}}^{\gamma} A_{\bm{t}}^{(\mathcal{T},v_0)}(\nu_n,\ldots,\nu_n)\Vert_{L^{p}(d\nu_n\,d\bm{t})}\lesssim C_{n-1,\epsilon,\tau,d}\sum_{j=0}^{\infty}2^{j(-\eta(p,d)+2\varepsilon)}2^{j(d-s_E+\varepsilon)}.
    \]
    Thus, we deduce that if $\varepsilon$ is sufficiently small (depending on $s_E$), then the sum converges and 
    \[ \Vert \partial_{\bm{t}}^{\gamma} A_{\bm{t}}^{(\mathcal{T},v_0)}(\nu_n,\ldots,\nu_n)\Vert_{L^{p}(d\nu_n\,d\bm{t})}\leq C_{n,\epsilon, \tau,d}\]

\end{proof}
Combining Proposition \ref{lem tree induction} with the Sobolev embedding theorem implies that measure $\nu^{x_0}_{\mathcal{T},v_0}$ has continuous density. We deduce Theorem \ref{Thm:pinnednonemptytreeinhigherdim} as an immediate corollary.

\bibliographystyle{alpha}
\bibliography{sources}

\end{document}